\newtheorem{teo}{Theorem}[section]
    \newtheorem{prop}[teo]{Proposition}
    \newtheorem{lema}[teo]{Lemma}
    \newtheorem{obs}[teo]{Remark}
    \newtheorem{ex}[teo]{Example}
\theoremstyle{definition}
    \newtheorem{dfn}[teo]{Definition}
    \newtheorem{defin}[teo]{Definition}
\newcommand{\El}{\varepsilon_t}
\newcommand{\Er}{\varepsilon_s}
\newcommand{\Ve}{\varepsilon}
\newcommand{\Sp}{\mathbin{\underline{\#}}}
\newcommand{\Hp}{{H_{_{par}}^w}}
\newcommand{\G}{\mathcal{G}}
\def \OS #1{\overset{#1}}
\renewcommand{\Ref}[1]{\overset{\textrm{ \tiny (\ref{#1})}}{=}}
    \def \OSeq #1{\OS{\eqref{#1}}}
\newcommand{\Ape}{\cdot}
\newcommand{\Aie}{\triangleright}
\newcommand{\Aid}{\triangleleft}
\newcommand{\Si}{S^{-1}}
\let\longto\longrightarrow
\newcommand{\E}{{E}}
\newcommand{\Et}{{\tilde{E}}}
\newcommand{\A}{{A_{_{par}}^w}}
\newcommand{\Ha}{\mathcal{H}}
\newcommand{\Sa}{\mathcal{S}}
\newcommand{\At}{{\tilde{A}_{_{par}}^w}}
\DeclareMathOperator{\End}{End}
\def \Cat #1{\ensuremath{\mathbf{#1}}}
\def \Catpm {\ensuremath{{}_{H}\Cat{parMod}}}
\def \CatHparm {\ensuremath{{}_{\Hp}\!\!\!\Cat{Mod}}}
\newcommand{\Not}[2][0]{	
	\setcounter{enumi}{#1}
	\renewcommand{\theenumi}{#2\arabic{enumi}}
	\renewcommand{\labelenumi}{(\theenumi)}
	\setlength{\itemindent}{\widthof{#2}}
	\setlength{\itemsep}{4pt}
}
\def\Label#1#2{\begingroup
	\def\@currentlabel{#2}%
	\label{#1}\endgroup
}
\newenvironment{igualdades*}{
	\[
	\setlength\arraycolsep{1.4pt} 
	\begin{align*}{rclr}
}{
\end{align*}
\]
}
\title{Weak Partial Representations}
\author[Castro]{Felipe Castro}
\address[Felipe Castro]{Departamento de Matem\'{a}tica, Universidade Federal de Santa Catarina, Brazil.}
\email{f.castro@ufsc.br}
\author[Quadros]{Glauber Quadros}
\address[Glauber Quadros]{Coordenação Acadêmica, Universidade Federal de Santa Maria, Brazil}
\email{glauber.quadros@ufsm.br}
\author[Tamusiunas]{Thaísa Tamusiunas}
\address[Thaísa Tamusiunas]{Departamento de Matem\'{a}tica Pura e Aplicada, Universidade Federal do Rio Grande do Sul, Brazil - Corresponding author}
\email{thaisa.tamusiunas@gmail.com}
\date{}
\begin{document}\frenchspacing
\renewcommand*{\sectionautorefname}{Section}
\renewcommand*{\chapterautorefname}{Chapter}


\begin{abstract}
    We introduce the notion of partial representation of a weak Hopf algebra. We present the universal algebra $\Hp$, which factorizes these partial representations by algebra morphisms. Also, it is shown that $\Hp$ is isomorphic to a partial smash product, that it has the structure of a Hopf algebroid and also that it can be endowed with a quantum inverse semigroup structure. Moreover, it is shown that the algebra objects in the module category over \(\Hp\) correspond to symmetrical partial module algebras.
\end{abstract}

\maketitle

\

\noindent \textbf{2020 AMS Subject Classification:} Primary 16T99. Secondary 16T05, 16S40.

\noindent \textbf{Keywords:} weak Hopf algebra, partial representation, weak partial representation.

\section{Introduction}

Dokuchaev, Exel and Piccione  analyzed the behavior of partial representations of groups in \cite{DEP}. Among others results, they established a correspondence between partial representations of the group $G$ and representations of the groupoid algebra $K_{par}G$, which is defined over a groupoid $\Gamma(G)$ constructed from the group $G$. This can be viewed as a partial-to-global result, as it describes a partial representation in terms of a global representation. An important aspect about the algebra $K_{par}G$ is that it is generated by an indutive groupoid, which is a type of ordered groupoid. However, this specific groupoid has a significant characteristic: it is associated with an inverse semigroup via the classical Ehresmann-Schein-Nambooripad Theorem (ESN Theorem) \cite[Theorem 4.1.8]{lawson1998inverse}. Furthermore, by the definition of the functors in the ESN Theorem, the image
$\Gamma(G)$ is more than just an inverse semigroup; it is an inverse monoid.

These results naturally raise the question: does the same apply to structures that extend groups? One possible way to extend groups is through groupoids, where not all elements are composable with each other and there are multiple identity elements instead of a single one. Indeed, a groupoid is a small category in which all morphisms are invertible. A theory for partial representations of groupoids was developed in \cite{lata} and \cite{lata2}, showing that each partial representation of a groupoid corresponds to a unique global representation of a groupoid algebra. The algebra associated with this correspondence is generated by the Birget-Rhodes expansion of the groupoid \cite[Theorem 3.3]{lata2}, which is a type of ordered groupoid \cite[Proposition 3.1]{gilbert2005actions}. More specifically, it is a locally complete inductive groupoid \cite[Remark 2.10]{lata}, which has the additional characteristic of being associated with an inverse category, via a generalization of the ESN Theorem \cite[Theorem 2.17]{dewolf2018ehresmann}. When the groupoid is, in particular, a group, this inverse category coincides with the inverse monoid that appears in the group case.

Another possible generalization of groups is via a Hopf algebra, which can be interpreted as a generalization of groups by linearity. In \cite{ABV}, Alves, Batista and Vercruysse studied partial representations for Hopf algebras. Given a Hopf algebra $H$, they introduced a universal algebra $H_{par}$, which has the universal property of factorizing a partial representation of $H$ by an algebra morphism. Also, among other properties, they showed its intrinsic relation with a partial action of $H$. The algebra $H_{par}$ presented in \cite{ABV} was shown to be a Hopf algebroid, and it establishes a one-to-one correspondence between partial representations of Hopf algebras and representations of the constructed Hopf algebroid. This result also generalizes the correspondence for groups presented in \cite{DEP}.

The reason for citing the ESN Theorem is that in \cite{qua}, a significant step was made in the direction to understand how this theorem could work in a Hopf context. In this reference, it was introduced the concept of quantum inverse semigroups, which is an algebraic structure that generalizes inverse semigroups by linearity. Examples of quantum inverse semigroups include Hopf algebras, weak Hopf algebras, inverse semigroup algebras and inverse semigroupoid algebras (in particular, inverse monoid algebras and inverse category algebras). Also, it was proved that the constructed Hopf algebroid $H_{par}$ of \cite{ABV} admits a quantum inverse semigroup, provided that the Hopf algebra is cocommutative.

There is yet another generalization of groups, which is the focus of our work: weak Hopf algebras. Partial representations of weak Hopf algebras unify the theories of partial representations for both groupoids and Hopf algebras, as each weak Hopf algebra $H$ can be viewed as a generalization of a groupoid (by linearity) as well as of a Hopf algebra. Regarding weak Hopf algebras, a similar construction for partial representations is also possible. It was already expected, and to the theory be consistent, our initial hypothesis was that it should exists a universal algebra $H_{par}^w$ with the universal property of factorizing a partial representation of $H$ by an algebra morphism and this algebra should also possess a Hopf algebroid structure in the weak case. This hypothesis holds true, as we will demonstrate throughout the paper. Naturally, another question arises: is it possible to endow the universal algebra $\Hp$ with a quantum inverse semigroup structure? The answer is yes, provided that the weak Hopf algebra is cocommutative, and this result is shown in \autoref{QISG}. 

In summary, in a Hopf context, partial representations of Hopf algebras are equivalent to representations of a constructed Hopf algebroid \cite[Theorem 4.2 and Theorem 4.10]{ABV}; and in the weak case, partial representations of weak Hopf algebras are equivalent to representations of another constructed Hopf algebroid. Both types of Hopf algebroids (standard and weak), under the assumption of cocommutativity of $H$,  admit a quantum inverse semigroup structure. So far, the theory remains well-behaved.

Therefore, our main purpose in this paper is to introduce the concept of partial representation of a weak Hopf algebra and some of its consequences. This notion unifies the partial representations of Hopf algebras discussed in \cite{ABV} and \cite{qua}, as well as the partial representations of groupoids presented in \cite{lata} and \cite{lata2}. The paper is organized as follows. In Section 2, we introduce some preliminary results and terminology. In Section 3, we define partial representations of a weak Hopf algebra in an algebra, provide examples, and discuss their properties.  In Section 4, we present the universal algebra $\Hp$, which factorizes any partial representation of a weak Hopf algebra by an unique algebra morphism. We also show that $\Hp$ is isomorphic to a partial smash product. In Section 5, we prove that $\Hp$ has a structure of a Hopf algebroid. In Section 6, we define partial modules over weak Hopf algebras, demonstrate that the category of partial modules is isomorphic to the category of partial representations, and show that a symmetric partial module algebra corresponds to an algebra object in the module category over the associated universal Hopf algebroid. Finally, in Section 7, we prove that $\Hp$ admits a quantum inverse semigroup structure.

For readers who are less familiar with weak Hopf algebras, we refer them to \cite{WHA1} and \cite{WHA2} for additional information.

Throughout, unless otherwise stated, all algebras are modules over a field $\Bbbk$ and unadorned $\otimes$ means $\otimes_{\Bbbk}$. Rings and algebras are associative and unital. For all the paper we will adopt the Sweedler notation (summation understood).

\section{Preliminaries}

We begin by presenting the concept of a weak Hopf algebra and discussing some of its properties. The reference for this section is \cite{CPQS}.

A sixtuple $(H, m, u, \Delta, \Ve, S)$ is a weak Hopf algebra with antipode $S$ if:
	\begin{enumerate}[(i)]
		\item $(H, m, u)$ is a $\Bbbk$-algebra, 
		\item $(H, \Delta, \Ve)$ is a $\Bbbk$-coalgebra,
		\item $\Delta(kh) = \Delta(k)\Delta(h)$
		\item $\Ve(kh_1)\Ve(h_2g) = \Ve(khg) = \Ve(kh_2)\Ve(h_1g)$,
		\item $(1_H \otimes \Delta(1_H))(\Delta(1_H) \otimes 1_H) = \Delta^2(1_H) = (\Delta(1_H)\otimes 1_H)(1_H \otimes \Delta(1_H))$,
		\item $h_1S(h_2) = \Ve_t(h)$,
		\item $S(h_1)h_2 = \Ve_s(h)$,
		\item $S(h) = S(h_1)h_2S(h_3)$,
	\end{enumerate}
for all $g, h, k \in H$, where $\Ve_t\colon H \longto H$ and $\Ve_s\colon H \longto H$ are defined by $\Ve_t(h) = \Ve(1_1h)1_2$ and $\Ve_s(h) = 1_1\Ve(h1_2)$, respectively. We will use the notation $H_t = \Ve_t(H)$ and $H_s = \Ve_s(H)$.

\begin{lema}\label{p9}\label{p11}\label{p12}\label{p4}
	Let $H$ be a weak Hopf algebra. Then,
	\begin{align}
		h_1 \otimes h_2 S(h_3) &= 1_1 h \otimes 1_2 \label{p4-1}\\
		S(h_1)h_2 \otimes h_3 &= 1_1 \otimes h 1_2 \label{p4-2}\\
		S(1_H) &= 1_H \label{p11-4}\\
		h_1 \otimes S(h_2) \ h_3 &= h \ 1_1 \otimes S(1_2) \label{p12-1} \\
		h_1 \ S(h_2) \otimes h_3 &= S(1_1) \otimes 1_2 \ h \label{p12-2},
	\end{align}
 for all $h, k\in H$. 
\end{lema}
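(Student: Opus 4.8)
The plan is to treat the five identities as one linked package rather than as five independent computations. The four tensor identities \eqref{p4-1}, \eqref{p4-2}, \eqref{p12-1} and \eqref{p12-2} share a common shape on the left: the antipode and the multiplication are applied to two \emph{adjacent} legs of $\Delta^2(h)$. Using coassociativity to group those two legs and then invoking the defining antipode relations (vi) and (vii), each left-hand side collapses to a source/target projection applied to a single leg of $\Delta(h)$. Explicitly, grouping gives $h_2S(h_3)=\Ve_t(h_2)$, $S(h_1)h_2=\Ve_s(h_1)$, $h_1S(h_2)=\Ve_t(h_1)$ and $S(h_2)h_3=\Ve_s(h_2)$, so the four statements become, respectively, $h_1\otimes\Ve_t(h_2)=1_1h\otimes1_2$, $\Ve_s(h_1)\otimes h_2=1_1\otimes h1_2$, $\Ve_t(h_1)\otimes h_2=S(1_1)\otimes1_2h$ and $h_1\otimes\Ve_s(h_2)=h1_1\otimes S(1_2)$. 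Thus the lemma reduces to these four ``counit--coproduct'' identities together with the normalization $S(1_H)=1_H$.

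First I would prove the two antipode-free identities $h_1\otimes\Ve_t(h_2)=1_1h\otimes1_2$ and $\Ve_s(h_1)\otimes h_2=1_1\otimes h1_2$, which form the technical heart. Unfolding $\Ve_t(h)=\Ve(1_1h)1_2$ rewrites the first as $\Ve(1_{1'}h_2)\,h_1\otimes1_{2'}=1_1h\otimes1_2$, where $1_{1'}\otimes1_{2'}$ is a second copy of $\Delta(1_H)$. The verification runs entirely on the two genuinely weak axioms: the weak multiplicativity of the counit (iv) merges the counit factors that arise when the free leg is paired off, and the factorization of $\Delta^2(1_H)$ in (v) is used to collapse the two copies of $\Delta(1_H)$ into one. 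The second identity is the same computation reflected left-to-right, with $\Ve_t$ and $\Ve_s$ interchanged.

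Granting these, the rest follows in order. Identities \eqref{p4-1} and \eqref{p4-2} are exactly the reductions above. The normalization \eqref{p11-4} then falls out: axiom (viii) at $1_H$ gives $S(1_H)=S(1_1)\,1_2\,S(1_3)$; grouping the last two legs and applying (vi) gives $1_2S(1_3)=\Ve_t(1_2)$, and the core identity at $h=1_H$ then yields $1_1\otimes1_2S(1_3)=1_1\otimes\Ve_t(1_2)=\Delta(1_H)$, so that $S(1_H)=S(1_1)1_2=\Ve_s(1_H)=1_H$ by (vii). Finally \eqref{p12-1} and \eqref{p12-2} follow from their reduced forms by repeating the counit--coproduct computation, now carrying the antipode through the legs of $\Delta(1_H)$ and using $S(1_H)=1_H$ together with (vi) and (vii) at the unit to identify the surviving factors $S(1_1)$ and $S(1_2)$.

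\textbf{Main obstacle.} The delicate step is the pair of core counit--coproduct identities. In contrast with the classical Hopf case one may not use $\Delta(1_H)=1_H\otimes1_H$ nor slide a counit past a tensor leg via $h_1\Ve(h_2)=h$, so the manipulation must go honestly through (iv) and (v). Pairing the free leg against an arbitrary $g$ via $g\mapsto\Ve(g\,\cdot\,)$ shows the two sides agree after applying $\mathrm{id}\otimes\Ve(g\,\cdot\,)$; but this pairing need not be nondegenerate, so the equality of the dangling legs must be extracted from the $\Delta^2(1_H)$ relation (v) rather than from nondegeneracy. Getting that reorganization right is the crux, and the antipode-twisted identities \eqref{p12-1} and \eqref{p12-2} add the extra bookkeeping of tracking $S$ on the legs of $\Delta(1_H)$.
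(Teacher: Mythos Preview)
The paper does not actually prove this lemma: it is listed in the preliminaries with the blanket attribution ``The reference for this section is \cite{CPQS}'', so there is no proof in the paper to compare your proposal against. Your outline is therefore being judged on its own merits, and it is essentially correct. The reduction of each of \eqref{p4-1}, \eqref{p4-2}, \eqref{p12-1}, \eqref{p12-2} to an identity of the form ``one leg of $\Delta(h)$ tensored with $\Ve_t$ or $\Ve_s$ of the other leg'' is exactly the standard first move, and your derivation of $S(1_H)=1_H$ from axiom (viii) together with the $h=1_H$ case of the core identity is the usual argument.

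Two small remarks on the execution. First, for the core identity $h_1\otimes\Ve_t(h_2)=1_1h\otimes1_2$ you will find that axiom~(iii) (multiplicativity of $\Delta$, in the form $\Delta(h)=\Delta(1_H)\Delta(h)$) is what actually does the work alongside~(v), rather than~(iv): unfolding $\Ve_t$, inserting a copy of $\Delta(1_H)$ via (iii), collapsing with (v), and then applying the ordinary counit law gives the result directly, with no pairing argument needed. Your detour through $\mathrm{id}\otimes\Ve(g\,\cdot\,)$ is not wrong, but it is longer than necessary and, as you yourself note, cannot conclude without coming back to (v) anyway. Second, the pair \eqref{p12-1}--\eqref{p12-2} requires a little more than ``repeating the computation'': the right-hand sides carry $S(1_1)$ and $S(1_2)$, and to match them you will need the anticoalgebra property of $S$ on $\Delta(1_H)$ (equivalently, the identities $S\circ\Ve_t=\Ve_s\circ S$ and its mate), not merely $S(1_H)=1_H$. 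This is routine once \eqref{p4-1}, \eqref{p4-2}, and \eqref{p11-4} are in hand, but it is an extra ingredient worth naming.
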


\begin{lema}\label{p3}\cite[Lemma 2.4]{CPQS}
    The following statements hold:
    \begin{equation}
        z \in H_t \iff \Delta(z) = 1_1 z \otimes 1_2 \label{p3-1}
    \end{equation}
    and in this case $\Delta(z) = z 1_1 \otimes 1_2$.
    \begin{equation}
        w \in H_s \iff \Delta(w) = 1_1 \otimes w 1_2 \label{p3-2}
    \end{equation}
    and in this case $\Delta(w) = 1_1  \otimes 1_2 w$.
    
    Moreover, $\Delta(H_t) \subset H \otimes H_t$ and $\Delta(H_s) \subset H_s \otimes H$.
\end{lema}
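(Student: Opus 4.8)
The plan is to prove the equivalence \eqref{p3-1} together with the inclusion $\Delta(H_t)\subseteq H\otimes H_t$ directly from the defining formula $\Ve_t(h)=\Ve(1_1h)1_2$ and from the comultiplicativity of the unit (axiom (v)); the statements \eqref{p3-2} and $\Delta(H_s)\subseteq H_s\otimes H$ then follow by the left--right symmetric argument using $\Ve_s(h)=1_1\Ve(h1_2)$, so I would write only the $H_t$ case in full and remark that the $H_s$ case is its mirror image. Throughout I would keep several independent copies of $\Delta(1_H)$, writing one as $1_1\otimes 1_2$ and a second as $1_{1'}\otimes 1_{2'}$.

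For the easy implication $(\Leftarrow)$ I would apply $\Ve\otimes\mathrm{id}$ to the hypothesis $\Delta(z)=1_1z\otimes 1_2$. The counit axiom turns the left-hand side into $z$, while the right-hand side becomes $\Ve(1_1z)1_2=\Ve_t(z)$; hence $z=\Ve_t(z)\in \Ve_t(H)=H_t$, and no idempotency of $\Ve_t$ is needed since $z$ is exhibited as its own image. For the implication $(\Rightarrow)$ I would take $z=\Ve_t(h)=\Ve(1_1h)1_2$ and compute $\Delta(z)=\Ve(1_1h)\Delta(1_2)$, reading $1_1\otimes\Delta(1_2)$ as the double coproduct $\Delta^2(1_H)$, so that $\Ve$ lands on the first leg. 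The key move is then to expand $\Delta^2(1_H)$ using the two factorizations in axiom (v). Using $\Delta^2(1_H)=1_{1'}\otimes 1_1 1_{2'}\otimes 1_2$ (from $(1_H\otimes\Delta(1_H))(\Delta(1_H)\otimes 1_H)$) lets me reassemble $\Ve(1_{1'}h)1_{2'}=\Ve_t(h)$ and obtain $\Delta(z)=1_1\Ve_t(h)\otimes 1_2=1_1z\otimes 1_2$; using instead $\Delta^2(1_H)=1_1\otimes 1_2 1_{1'}\otimes 1_{2'}$ (from $(\Delta(1_H)\otimes 1_H)(1_H\otimes\Delta(1_H))$) reassembles $\Ve(1_1h)1_2=\Ve_t(h)$ and gives $\Delta(z)=\Ve_t(h)1_{1'}\otimes 1_{2'}=z1_1\otimes 1_2$. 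This simultaneously proves the equivalence and the ``in this case'' clause, i.e.\ that both normal forms of $\Delta(z)$ coincide.

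Finally, for $\Delta(H_t)\subseteq H\otimes H_t$ I would show that the right leg of $\Delta(1_H)$ already lies in $H_t$, that is, $(\mathrm{id}\otimes\Ve_t)\Delta(1_H)=\Delta(1_H)$; since $\Delta(z)=1_1z\otimes 1_2$ has the same right legs as $\Delta(1_H)$, the inclusion then follows at once. Concretely I would compute $1_1\otimes\Ve_t(1_2)=1_1\otimes\Ve(1_{1'}1_2)1_{2'}$ and, after the factorization $\Delta^2(1_H)=1_1\otimes 1_{1'}1_2\otimes 1_{2'}$, recognize it as $(\mathrm{id}\otimes\Ve\otimes\mathrm{id})\Delta^2(1_H)$, which equals $\Delta(1_H)$ by the counit axiom.

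\textbf{Main obstacle.} I expect the difficulty to be purely bookkeeping rather than conceptual: keeping the several independent copies of $\Delta(1_H)$ distinct and selecting, at each step, the factorization of $\Delta^2(1_H)$ from axiom (v) whose multiplication pattern matches the scalar factor $\Ve(1_1h)$ (resp.\ $\Ve(1_{1'}1_2)$) one wishes to reabsorb into $\Ve_t$. The order of multiplication inside $\Ve$ is exactly what forces the choice between the two forms, and choosing the wrong factorization silently produces the transposed identity, so I would double-check each collapse against the counit axiom.
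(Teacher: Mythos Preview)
Your argument is correct. In the paper this lemma is stated with a citation to \cite[Lemma~2.4]{CPQS} and carries no proof of its own, so there is no ``paper's proof'' to compare line by line; nevertheless, what you outline is exactly the standard verification one finds in the cited source. The $(\Leftarrow)$ direction via $(\Ve\otimes\mathrm{id})$ is the right one-liner, and for $(\Rightarrow)$ you correctly identify that $1_1\otimes\Delta(1_2)=\Delta^2(1_H)$ and that the two factorizations of $\Delta^2(1_H)$ in axiom~(v) give the two normal forms $1_1z\otimes 1_2$ and $z1_1\otimes 1_2$ respectively. Your proof of $1_1\otimes\Ve_t(1_2)=\Delta(1_H)$ for the inclusion $\Delta(H_t)\subseteq H\otimes H_t$ is also fine: after the harmless relabelling of the two copies of $\Delta(1_H)$, the expression $1_1\otimes 1_{1'}1_2\otimes 1_{2'}$ is indeed one of the forms of $\Delta^2(1_H)$, and $(\mathrm{id}\otimes\Ve\otimes\mathrm{id})$ collapses it by the counit axiom. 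The only cosmetic point is that your ``main obstacle'' paragraph accurately describes the situation: the argument is routine once one tracks which copy of $\Delta(1_H)$ carries the scalar $\Ve(1_1h)$.
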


We recall the definition of  partial $H$-module algebra. A left partial action of the weak Hopf algebra $H$ on the algebra $A$ is a linear mapping $\alpha \colon H \otimes A \longto A$, denoted here by $\alpha(h \otimes a) = h \cdot a$, such that
\begin{enumerate}\Not{PA}
    \item $h \cdot (ab) = (h_{1} \cdot a)(h_2 \cdot b)$,
    \item $ 1_H \cdot a = a$,
    \item $h \cdot (k \cdot a) = (h_1 \cdot 1_A)((h_2k) \cdot a)$,
\end{enumerate}
for all $h, k \in H, a \in A$. In this case, we call A a \emph{left partial H-module algebra}. Moreover, we say that a partial action of $H$ on $A$ is \emph{symmetric} (or $A$ is a \emph{left symmetric partial H-module algebra}) if it has the additional property:
\begin{enumerate}\Not[3]{PA}
    \item $h \cdot (k \cdot a) = ((h_1k) \cdot a)(h_2 \cdot 1_A),$ for all $h, k \in H, a \in A$.\label{pma-sym}
\end{enumerate}

If $A$ is a partial $H$-module algebra, it follows by \cite{CPQS} that:

\begin{prop}\label{lemaadicional}	
 $(h \cdot a)(k \cdot b) = (1_1h \cdot a)(1_2 k \cdot b)$, for all $h,k \in H$ and $a,b \in A$. 
\end{prop}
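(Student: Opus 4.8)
The plan is to prove the identity by slipping a copy of $\Delta(1_H)$ between the two factors and then absorbing it into the first one. I will first record two consequences of the axioms that will do all of the bookkeeping. From (PA1) applied to the product $1_A\,a$ together with unitality of $A$ one gets the ``covering'' identity
\[
h\cdot a=(h_1\cdot 1_A)(h_2\cdot a),
\]
and from (PA2) and (PA3) applied to $k\cdot b=1_H\cdot(k\cdot b)$ one gets
\[
k\cdot b=(1_1\cdot 1_A)\big((1_2 k)\cdot b\big).
\]
This second identity is the crucial one: it exhibits $(1_1\cdot 1_A)$ as a kind of local unit that can be inserted in front of $(k\cdot b)$ at the cost of decorating $k$ with $1_2$.

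Using it, the left-hand side becomes
\[
(h\cdot a)(k\cdot b)=(h\cdot a)(1_1\cdot 1_A)\big((1_2 k)\cdot b\big),
\]
so everything reduces to moving the idempotent $(1_1\cdot 1_A)$ across $(h\cdot a)$, that is, to the identity
\[
(h\cdot a)(1_1\cdot 1_A)\otimes 1_2=(1_1 h\cdot a)\otimes 1_2
\]
in $A\otimes H$; applying the linear map $x\otimes g\mapsto x\,((gk)\cdot b)$ to this identity and invoking the covering identity for $k\cdot b$ once more recovers exactly the claim. To establish the displayed $A\otimes H$ identity I would expand its left-hand side with the covering identity and rewrite $1_1h\otimes 1_2$ on the right by means of the weak antipode relation \eqref{p4-1}, namely $1_1h\otimes 1_2=h_1\otimes h_2S(h_3)$, so that the two sides can be matched after pushing the comultiplication of $1_H$ through the actions by coassociativity of $\Delta(1_H)$ and \autoref{p3}.

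The main obstacle is precisely this last step. Because $\Delta(1_H)\neq 1_H\otimes 1_H$ in the weak setting, the two occurrences of $\Delta(1_H)$ — the one inserted via the covering identity and the one produced by \eqref{p4-1} — must be reconciled, and this is where the weak Hopf axioms (iv)--(v) and \autoref{p3} (controlling $\Delta$ on $H_t$ and $H_s$) genuinely enter; the ordering of the factors in $A$ makes it necessary to route every manipulation through (PA1) and (PA3) rather than commuting terms freely. As a consistency check, when $H$ is an ordinary Hopf algebra one has $1_1\otimes 1_2=1_H\otimes 1_H$, the inserted idempotent is the genuine unit of $A$, and the statement collapses to the trivial $(h\cdot a)(k\cdot b)=(h\cdot a)(k\cdot b)$; thus the entire content of the proposition is carried by the failure of $\Delta(1_H)$ to be group-like, which is exactly what the computation above is designed to track.
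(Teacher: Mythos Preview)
The paper does not actually prove this proposition; it merely attributes it to \cite{CPQS}. So there is no in-paper argument to compare against, and your proposal has to be judged on its own.

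Your opening reductions are fine: the covering identity $h\cdot a=(h_1\cdot 1_A)(h_2\cdot a)$ follows from (PA1), and $k\cdot b=(1_1\cdot 1_A)\big((1_2 k)\cdot b\big)$ follows from (PA2)--(PA3), so you legitimately arrive at
\[
(h\cdot a)(k\cdot b)=(h\cdot a)(1_1\cdot 1_A)\big((1_2 k)\cdot b\big).
\]
But from here on the argument is not a proof. You assert that everything reduces to the $A\otimes H$ identity
\[
(h\cdot a)(1_1\cdot 1_A)\otimes 1_2=(1_1 h\cdot a)\otimes 1_2,
\]
and then you yourself label this ``the main obstacle'' and only gesture at a strategy (rewrite the right side via \eqref{p4-1} as $(h_1\cdot a)\otimes h_2S(h_3)$ and ``reconcile'' two copies of $\Delta(1_H)$ using axioms (iv)--(v) and \autoref{p3}). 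None of that is carried out, and when you try to execute it you will see the difficulty: after expanding the left side with the covering identity you are left needing to collapse $(h_2\cdot 1_A)(1_1\cdot 1_A)\otimes 1_2$ against $h_1$, and nothing in (PA1)--(PA3) lets you merge $(h_2\cdot 1_A)(1_1\cdot 1_A)$ into a single action without already having the very commutation you are trying to prove. In other words, the displayed $A\otimes H$ identity is at least as hard as the proposition, and your sketch does not close the loop.

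If you want a route that actually lands, go through the intermediate expression $(1_1\cdot(h\cdot a))(1_2\cdot(k\cdot b))$: it equals the left side of the proposition by (PA2) and (PA1), and after expanding each factor by (PA3) you get
\[
(1_1\cdot 1_A)(1_2 h\cdot a)(1_3\cdot 1_A)(1_4 k\cdot b),
\]
where the copies of $\Delta(1_H)$ are already organized so that the weak-bialgebra identity $\Delta^2(1_H)=(\Delta(1_H)\otimes 1_H)(1_H\otimes\Delta(1_H))$ can be applied to peel off a leading $(1_1\cdot 1_A)(1_2\,\text{--}\cdot a)=1_H\cdot(\text{--}\cdot a)$ via (PA3) and (PA2). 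That is the computation in \cite{CPQS}; your proposal has the correct intuition about inserting $\Delta(1_H)$, but the $A\otimes H$ detour is a strictly harder statement and you have not proved it.
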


\begin{prop}\label{noHRcola}
    If $w\in H_s$ (or, $w\in H_t$ and the partial action is symmetric),
    then
    \[
        w\cdot (h\cdot a)=wh\cdot a,
    \]
    for all $h\in H$ and $a\in A$. 
\end{prop}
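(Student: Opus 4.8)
The plan is to treat the two cases separately, since each one invokes a different axiom of the partial action, but both collapse via the same final simplification. In each case I would first rewrite $w\cdot(h\cdot a)$ using one of the defining properties of the partial action so as to split off a factor in which one leg of $\Delta(w)$ acts on $1_A$, then substitute the appropriate coproduct formula for $w$ coming from Lemma \ref{p3}, and finally recognize the resulting product as an instance of the right-hand side of Proposition \ref{lemaadicional}. That proposition, read backwards, re-absorbs the two legs of $\Delta(1_H)$, and (PA2) in the form $1_H\cdot 1_A=1_A$ finishes the computation.

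For $w\in H_s$ I would start from (PA3), giving $w\cdot(h\cdot a)=(w_1\cdot 1_A)\big((w_2 h)\cdot a\big)$. Using the second description of the coproduct on $H_s$ from Lemma \ref{p3}, namely $\Delta(w)=1_1\otimes 1_2 w$, this becomes $(1_1\cdot 1_A)\big((1_2\, wh)\cdot a\big)$. This is exactly the right-hand side of Proposition \ref{lemaadicional} with the first tensor leg carrying $1_H$ acting on $1_A$ and the second carrying $wh$ acting on $a$; reading that proposition backwards turns it into $(1_H\cdot 1_A)\big(wh\cdot a\big)=wh\cdot a$ by (PA2).

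For $w\in H_t$ with a symmetric action the roles are swapped. Here I would instead use the symmetry property (PA4), obtaining $w\cdot(h\cdot a)=\big((w_1 h)\cdot a\big)(w_2\cdot 1_A)$, and feed in the coproduct formula $\Delta(w)=1_1 w\otimes 1_2$ valid on $H_t$ by \eqref{p3-1}. This yields $\big((1_1\, wh)\cdot a\big)(1_2\cdot 1_A)$, again an instance of the right-hand side of Proposition \ref{lemaadicional}, now with $wh$ acting on $a$ in the first leg and $1_H$ acting on $1_A$ in the second; the same backward reading together with (PA2) gives $wh\cdot a$.

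The only genuine decision in the argument, and the reason the two cases must be separated, is matching the positions of the surviving idempotent legs. Property (PA3) places the factor $w_1\cdot 1_A$ on the left, so it is useful precisely when $w\in H_s$, where the formula $\Delta(w)=1_1\otimes 1_2 w$ pushes $w$ onto the right leg, i.e.\ the one acting on $a$; for $w\in H_t$ the analogous attempt would strand $w$ on $1_A$, which is exactly why symmetry is needed to move $w$ into the leg acting on $a$. Once the correct axiom and the correct one of the two equivalent coproduct expressions from Lemma \ref{p3} are chosen so that the surviving $1_1,1_2$ sit in the left/right positions demanded by Proposition \ref{lemaadicional}, the rest is the routine collapse described above, and I expect no obstacle beyond this bookkeeping.
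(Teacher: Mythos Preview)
Your argument is correct in both cases: the choice of (PA3) for $w\in H_s$ and (PA4) for $w\in H_t$ is exactly what is needed so that, after substituting the appropriate form of $\Delta(w)$ from Lemma~\ref{p3}, the factor $w$ lands on the leg acting on $a$ and the remaining $1_1,1_2$ are positioned to be absorbed via Proposition~\ref{lemaadicional} read right-to-left, followed by (PA2).

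There is, however, nothing to compare against in this paper: Proposition~\ref{noHRcola} is stated here without proof and attributed to \cite{CPQS}. Your write-up is a clean self-contained verification and would be a perfectly acceptable proof to insert; it is in fact the natural argument, and almost certainly the one given in the cited reference.
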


Recall that the \emph{smash product} $A \# H$ is the tensor product $A \otimes_{H_t} H$ with multiplication rule given by $(a\#h)(b\#g) = a(h_1 \cdot b)\#h_2g$, and the \emph{partial smash product} $A \Sp H$ is the subalgebra generated by the elements of the form $(a\#h)(1_A \# 1_H) = a(h_1 \cdot 1_A)\#h_2$. By \(a \Sp h\) we mean \((a \mathbin{\#}h)(1_A \mathbin{\#} 1_H)\).

\begin{prop}
	Let $A$ be a partial $H$-module algebra. The following sentences are valid.
\begin{enumerate}	
\item 	$a \Sp h = (a \Sp 1_H)(1_A\Sp h)$, for all $a \in A$ and $h \in H$. \label{quebradosmash}
\item The following map is an algebra monomorphism. \label{imersaonosmash}
    \begin{align*}
    	\imath\colon A & \longto A\Sp H \\
    	a & \longmapsto a \Sp 1_H
	\end{align*}

	\end{enumerate}
\end{prop}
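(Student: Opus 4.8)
The plan is to do every computation inside the smash product $A\#H=A\otimes_{H_t}H$, using the rule $(a\#h)(b\#g)=a(h_1\cdot b)\#h_2g$ together with the $H_t$-balancing $a(z\cdot 1_A)\#h=a\#zh$ for $z\in H_t$. The technical backbone is that $e:=1_A\#1_H$ is idempotent: by \autoref{p3} one has $\Delta(1_H)\in H_s\otimes H_t$, so the second leg $1_2$ lies in $H_t$ and the balancing yields $e^2=(1_1\cdot 1_A)\#1_2=(1_1\cdot 1_A)(1_2\cdot 1_A)\#1_H=(1_H\cdot 1_A)\#1_H=e$, the last equalities coming from $h\cdot(ab)=(h_1\cdot a)(h_2\cdot b)$ and $1_H\cdot a=a$. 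Consequently $1_A\Sp 1_H=e^2=e$. The identity that drives the rest is $(1_1\cdot 1_A)(1_2\cdot x)(1_3\cdot 1_A)=x$ for every $x\in A$, where $\Delta^2(1_H)=1_1\otimes 1_2\otimes 1_3$; it is obtained by grouping the first two legs through $h\cdot(ab)=(h_1\cdot a)(h_2\cdot b)$ into $1_{(1)}\cdot x$ and then collapsing the last leg by the same axiom and $1_H\cdot x=x$.

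For \ref{quebradosmash} I would expand $(a\Sp 1_H)(1_A\Sp h)=\bigl(a(1_1\cdot 1_A)\#1_2\bigr)\bigl((h_1\cdot 1_A)\#h_2\bigr)$ with the smash rule and repackage the legs of $\Delta^2(1_H)$ by coassociativity, reaching $a(1_1\cdot 1_A)\bigl(1_2\cdot(h_1\cdot 1_A)\bigr)\#1_3h_2$. Pushing the $H_t$-element $1_3$ across the balancing and invoking the driving identity with $x=h_1\cdot 1_A$ collapses the factor in $A$ to $h_1\cdot 1_A$, leaving $a(h_1\cdot 1_A)\#h_2=a\Sp h$, which is \ref{quebradosmash}. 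Right unit-ality of $e$ is immediate from $e^2=e$ and $a\Sp h=(a\#h)e$, while an analogous computation gives $e\,(a\Sp h)=a\Sp h$; hence $e=1_A\Sp 1_H$ is the two-sided unit of $A\Sp H$.

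For \ref{imersaonosmash}, unitality of $\imath$ holds because $\imath(1_A)=1_A\Sp 1_H=e$, and multiplicativity $\imath(a)\imath(b)=\imath(ab)$ is an analogous, if longer, expansion of $(a\Sp 1_H)(b\Sp 1_H)$ that reduces to $(ab)(1_1\cdot 1_A)\#1_2=(ab)\Sp 1_H$ using the driving identity, the relation $(h_1\cdot a)(h_2\cdot 1_A)=h\cdot a$, and \autoref{lemaadicional}. For injectivity I would produce a $\Bbbk$-linear left inverse: since $A\Sp H$ is spanned by the elements $a\Sp h$, the assignment $\pi(a\Sp h)=a(h\cdot 1_A)$ defines a map $\pi\colon A\Sp H\longto A$ once well-definedness is checked, and then $\pi(\imath(a))=\pi(a\Sp 1_H)=a(1_H\cdot 1_A)=a$, forcing $\imath$ to be injective.

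The main obstacle is precisely the weakness $\Delta(1_H)\neq 1_H\otimes 1_H$: in contrast with the classical Hopf setting one cannot split $a\#h=(a\#1_H)(1_A\#h)$ for free, so the stray idempotents $h\cdot 1_A$ must be absorbed by hand, which is feasible only because $\Delta(1_H)\in H_s\otimes H_t$ (\autoref{p3}) lets the balancing turn leftover $H_t$-legs into actions on $1_A$. The subtlest single point is the well-definedness of $\pi$, i.e.\ its compatibility with the $H_t$-balancing defining $A\otimes_{H_t}H$; this reduces to the identity $(z\cdot 1_A)(h\cdot 1_A)=(zh)\cdot 1_A$ for $z\in H_t$, which I would derive from the partial-action axioms, the form $\Delta(z)=1_1z\otimes 1_2$ of \autoref{p3}, and \autoref{noHRcola}.
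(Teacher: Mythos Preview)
The paper does not prove this proposition; it is recorded in the preliminaries as a fact imported from \cite{CPQS}, so there is no argument in the text to compare your attempt against.

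Your outline has the right shape, but it rests on an incorrect balancing relation. From the proof of the covariant-pair theorem later in the paper one reads off that the right $H_t$-module structure on $A$ is $a\triangleleft z=a\bigl(S_R^{-1}(z)\cdot 1_A\bigr)$, so the balancing in $A\otimes_{H_t}H$ is
\[
a\bigl(S_R^{-1}(z)\cdot 1_A\bigr)\mathbin{\#}h \;=\; a\mathbin{\#}zh,\qquad z\in H_t,
\]
and not $a(z\cdot 1_A)\#h=a\#zh$ as you assume. When you ``push $1_3$ across'' you therefore pick up $S_R^{-1}(1_3)\cdot 1_A$ rather than $1_3\cdot 1_A$, and your driving identity $(1_1\cdot 1_A)(1_2\cdot x)(1_3\cdot 1_A)=x$ no longer matches the expression you have. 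The same mismatch contaminates the well-definedness check for your retraction $\pi$: the identity actually needed is $\bigl(S_R^{-1}(z)\cdot 1_A\bigr)(h\cdot 1_A)=(zh)\cdot 1_A$, which in general is \emph{not} equivalent to the identity $(z\cdot 1_A)(h\cdot 1_A)=(zh)\cdot 1_A$ you propose to derive (for $w\in H_s$ one gets $(w\cdot 1_A)(h\cdot 1_A)=wh\cdot 1_A$, and there is no reason for $wh\cdot 1_A$ to equal $S(w)h\cdot 1_A$).

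A route for \ref{quebradosmash} that sidesteps balancing entirely and works without any symmetry hypothesis: starting from
\[
(a\Sp 1_H)(1_A\Sp h)=a(1_1\cdot 1_A)\bigl(1_2\cdot(h_1\cdot 1_A)\bigr)\#1_3h_2,
\]
expand the middle factor via (PA3), collapse the resulting $(1_1\cdot 1_A)(1_2\cdot 1_A)$ to $1_1\cdot 1_A$ by (PA1), then use the weak-bialgebra identity $\Delta^2(1_H)=(\Delta(1_H)\otimes 1_H)(1_H\otimes\Delta(1_H))$ together with \autoref{lemaadicional} in the form $(1_1\cdot 1_A)(1_2k\cdot 1_A)=(1_H\cdot 1_A)(k\cdot 1_A)=k\cdot 1_A$ to reach $a(h_1\cdot 1_A)\#h_2=a\Sp h$. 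The same device handles multiplicativity of $\imath$ and the idempotency of $1_A\#1_H$. For \ref{imersaonosmash} you should revisit the construction of the left inverse with the correct $H_t$-action in hand; as written, the reduction you sketch does not establish well-definedness.
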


\section{Partial Representation Theory}

We start this section by introducing the notion of partial representation of a weak Hopf algebra in an algebra.

\begin{dfn}\label{defprep}
	A partial representation of a weak Hopf algebra $H$ in an algebra $A$, or simply a weak partial representation, is a $\Bbbk$-linear map $\pi: H \to A$ such that the following statements hold:
	\begin{enumerate}\Not{PR}
		\item $\pi(1_H) = 1_A$\label{defprep-1};
		\item $\pi(h) \pi(k_1) \pi(S(k_2)) = \pi(h k_1) \pi(S(k_2))$\label{defprep-2};
		\item $\pi(h) \pi(S(k_1)) \pi(k_2) = \pi(h S(k_1)) \pi(k_2)$\label{defprep-3};
		\item $\pi(h_1) \pi(S(h_2)) \pi(k) = \pi(h_1) \pi(S(h_2) k)$\label{defprep-4};
		\item $\pi(S(h_1)) \pi(h_2) \pi(k) = \pi(S(h_1)) \pi(h_2 k)$\label{defprep-5};
		\item $\pi(h) = \pi(h_1) \pi(S(h_2)) \pi(h_3)$\label{defprep-6}.
	\end{enumerate}
\end{dfn}

Observe that the definition of partial representation of Hopf algebras does not require the condition \eqref{defprep-6}, because it is a consequence from the others, as it can be seen in \cite{ABV}*{Proposition 3.3}. In the weak case we have following equivalences.

\begin{prop} \label{6-equiv}
	Let $\pi: H \to A$ a linear map and suppose that it satisfies the conditions (\ref{defprep-1}-\ref{defprep-5}). Then the following assertions are equivalent:
	\begin{enumerate}\Not{}
		\item $\pi(h) = \pi(h_1) \pi(S(h_2)) \pi(h_3)$\label{6-equiv-1};
		\item $\pi(1_1) \pi(S(1_2)) = 1_A$\label{6-equiv-2};
		\item $\pi(S(1_1)) \pi(1_2) = 1_A$\label{6-equiv-3};
		\item $\pi(S(h)) = \pi(S(h_1)) \pi(h_2) \pi(S(h_3))$\label{6-equiv-4}.
	\end{enumerate}
	
	Moreover, if $H$ has invertible antipode, then we have 3 more conditions to add to the above ones:
	\begin{enumerate}\Not[4]{}
		\item $\pi(h) = \pi(h_3) \pi(\Si (h_2)) \pi(h_1)$\label{6-equiv-1'};
		\item $\pi(\Si(1_2)) \pi(1_1) = 1_A$\label{6-equiv-2'};
		\item $\pi(1_2) \pi(\Si(1_1)) = 1_A$\label{6-equiv-3'}.
	\end{enumerate}
\end{prop}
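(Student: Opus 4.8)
The plan is to set $e=\pi(1_1)\pi(S(1_2))$ and $f=\pi(S(1_1))\pi(1_2)$, so that condition \eqref{6-equiv-2} is exactly the statement $e=1_A$ and \eqref{6-equiv-3} is $f=1_A$, and then to establish the chain $\eqref{6-equiv-1}\Leftrightarrow\eqref{6-equiv-2}\Leftrightarrow\eqref{6-equiv-3}\Leftrightarrow\eqref{6-equiv-4}$. The three conditions involving $\Si$ will be obtained at the end by transporting the whole statement to the co-opposite weak Hopf algebra.

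For $\eqref{6-equiv-1}\Leftrightarrow\eqref{6-equiv-2}$ I would first record the unconditional identity
\[
\pi(h_1)\pi(S(h_2))\pi(h_3)\overset{\eqref{defprep-4}}{=}\pi(h_1)\pi(S(h_2)h_3)\overset{\eqref{p12-1}}{=}\pi(h1_1)\pi(S(1_2))\overset{\eqref{defprep-2}}{=}\pi(h)\,e ,
\]
the last step being \eqref{defprep-2} read backwards with $k=1_H$. Assuming $e=1_A$ this is precisely \eqref{6-equiv-1}; conversely, evaluating the identity at $h=1_H$ gives $\pi(1_1)\pi(S(1_2))\pi(1_3)=e$, while \eqref{6-equiv-1} at $h=1_H$ together with \eqref{defprep-1} gives $\pi(1_1)\pi(S(1_2))\pi(1_3)=\pi(1_H)=1_A$, forcing $e=1_A$. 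The equivalence $\eqref{6-equiv-3}\Leftrightarrow\eqref{6-equiv-4}$ is the mirror image: applying \eqref{defprep-5}, then \eqref{p4-1} (after $S\otimes\mathrm{id}$), then \eqref{defprep-3} backwards produces $\pi(S(h_1))\pi(h_2)\pi(S(h_3))=\pi(S(h))\,f$, and specializing at $h=1_H$ with \eqref{p11-4} closes the loop.

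The crux is the bridge $\eqref{6-equiv-2}\Leftrightarrow\eqref{6-equiv-3}$, which I would obtain from the two identities $ef=e$ and $fe=f$, valid with \emph{no} extra hypothesis. For $fe=f$ one applies \eqref{defprep-2} to merge the middle factors, then the unit relation $(\Delta(1_H)\otimes 1_H)(1_H\otimes\Delta(1_H))=\Delta^2(1_H)$ to collapse the two copies of $\Delta(1_H)$ into one, then \eqref{defprep-5}, and finally \eqref{p4-1}; the computation for $ef=e$ is symmetric, using \eqref{defprep-3}, the other form of the unit relation, \eqref{defprep-4} and \eqref{p12-1}. Granting these, $e=1_A$ gives $f=1_A\cdot f=ef=e=1_A$, and $f=1_A$ gives $e=1_A\cdot e=fe=f=1_A$. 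I expect these two linking identities to be the main obstacle of the first part, since they are the only place where the interaction of the two copies of $\Delta(1_H)$ governed by axiom (v) is genuinely used.

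Finally, for the conditions with $\Si$ I would pass to the co-opposite weak Hopf algebra $H^{\mathrm{cop}}$, which has antipode $\Si$ precisely because $S$ is invertible. The key observation is that the substitution $k=S(\ell)$, a bijection of $H$, turns conditions \eqref{defprep-2}--\eqref{defprep-5} for $H^{\mathrm{cop}}$ into \eqref{defprep-3}, \eqref{defprep-2}, \eqref{defprep-5}, \eqref{defprep-4} for $H$, so that $\pi$ automatically satisfies the hypotheses of the proposition for $H^{\mathrm{cop}}$ as well. Applying the already-proved part to $H^{\mathrm{cop}}$ then shows that \eqref{6-equiv-1'}, \eqref{6-equiv-2'}, \eqref{6-equiv-3'} --- which are exactly conditions \eqref{6-equiv-1}, \eqref{6-equiv-3}, \eqref{6-equiv-2} read in $H^{\mathrm{cop}}$ --- are mutually equivalent. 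To splice this group onto the first one it suffices to connect a single pair: substituting $h=\Si(\ell)$ into \eqref{6-equiv-4} and using $S\Si=\mathrm{id}$ together with $\Delta^2(\Si(\ell))=\Si(\ell_3)\otimes\Si(\ell_2)\otimes\Si(\ell_1)$ yields $\pi(\ell)=\pi(\ell_3)\pi(\Si(\ell_2))\pi(\ell_1)$, i.e.\ \eqref{6-equiv-1'}, while the reverse implication is the same computation with the roles of $H$ and $H^{\mathrm{cop}}$ exchanged. This would finish the equivalence of all seven conditions.
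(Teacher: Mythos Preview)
Your proof is correct, but the bridge $(2)\Leftrightarrow(3)$ is handled differently from the paper and is more laborious than necessary. You reduce $\pi(h_1)\pi(S(h_2))\pi(h_3)$ to $\pi(h)\,e$ and $\pi(S(h_1))\pi(h_2)\pi(S(h_3))$ to $\pi(S(h))\,f$, and then manufacture the bridge by proving the auxiliary identities $ef=e$ and $fe=f$ using the weak-unit axiom (v). The paper instead reduces \emph{each} of the two triple products in \emph{two} ways: besides $\pi(h_1)\pi(S(h_2))\pi(h_3)=\pi(h)\,e$ it also computes $\pi(h_1)\pi(S(h_2))\pi(h_3)=\pi(h_1S(h_2))\pi(h_3)\overset{\eqref{p12-2}}{=}\pi(S(1_1))\pi(1_2h)\overset{\eqref{defprep-5}}{=}f\,\pi(h)$, and similarly $\pi(S(h_1))\pi(h_2)\pi(S(h_3))=e\,\pi(S(h))$. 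Evaluating the first pair at $h=1_H$ with \eqref{defprep-1} gives $e=f$ for free, so $(2)\Leftrightarrow(3)$ is immediate and axiom (v) is never invoked directly. This is cleaner; your $ef=e$, $fe=f$ argument works but buys the conclusion at higher cost.

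For the last three conditions your treatment is actually more thorough than the paper's, which says nothing about $(5)$--$(7)$ beyond ``the result follows''. Your passage to $H^{\mathrm{cop}}$ is a valid way to close this. One small imprecision: the substitution $k=S(\ell)$ only converts \eqref{defprep-2} and \eqref{defprep-3} for $H^{\mathrm{cop}}$ into \eqref{defprep-3} and \eqref{defprep-2} for $H$; for \eqref{defprep-4} and \eqref{defprep-5} you must substitute in the \emph{other} variable, $h=S(m)$, since that is where the Sweedler legs and the antipode sit. With that correction the $H^{\mathrm{cop}}$ transfer and the splice $(4)\Leftrightarrow(5)$ via $h=\Si(\ell)$ go through as you describe.
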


\begin{proof}
	Since $\pi$ satisfies (\ref{defprep-1}-\ref{defprep-5}),
	
	\begin{align*}
	\pi(h_1) \pi(S(h_2)) \pi(h_3) &= \pi(h_1) \pi(S(h_2) h_3) \\
	\OSeq{p12-1}&= \pi(h 1_1) \pi(S(1_2)) \\
	\OSeq{defprep-2}&= \pi(h) \pi(1_1) \pi(S(1_2))\\
        \intertext{and}
	\pi(h_1) \pi(S(h_2)) \pi(h_3) &= \pi(h_1 S(h_2)) \pi(h_3) \\
	\OSeq{p12-2}&= \pi( S(1_1) ) \pi(1_2 h) \\
	\OSeq{defprep-5}&= \pi{(S(1_1))} \pi(1_2) \pi(h).
	\end{align*}
	
	Moreover, we also have
	\begin{align*}
	\pi(S(h_1)) \pi(h_2) \pi(S(h_3)) &= \pi(S(h_1)) \pi(h_2 S(h_3)) \\
	\OSeq{p4-1}&= \pi(S(1_1 h)) \pi(1_2) \\
	&= \pi(S(h)S(1_1)) \pi(1_2) \\
	\OSeq{defprep-3}&= \pi(S(h)) \pi(S(1_1)) \pi(1_2).
	\end{align*}
	and
	\begin{align*}
	\pi(S(h_1)) \pi(h_2) \pi(S(h_3)) &= \pi(S(h_1) h_2) \pi(S(h_3)) \\
	\OSeq{p4-2}&= \pi(1_1) \pi(S(h 1_2)) \\
	&= \pi(1_1) \pi(S(1_2) S(h)) \\
	\OSeq{defprep-4}&= \pi(1_1) \pi(S(1_2)) \pi(S(h)).
	\end{align*}
	
	Applying (\ref{defprep-1}), the result follows.
\end{proof}

Observe that if $\pi$ is a global representation (i.e., $\pi$ is an algebra morphism) then $\pi$ is also a partial representation. When $H$ is a Hopf algebra and $\pi: H \to A$ is a partial representation, we have the following equivalences (cf. \cite{ABV}):
\begin{enumerate}[(I)]
	\item $\pi(h_1) \pi(S(h_2)) = \pi(h_1 S(h_2)) = \Ve(h) 1_A, ~ \forall h \in H$ 
	\item $\pi(S(h_1)) \pi(h_2) = \pi(S(h_1) h_2) = \Ve(h) 1_A, ~ \forall h \in H$
	\item $\pi$ is a representation.
\end{enumerate}

In the weak Hopf algebra case, this theorem can be formulated in the following way.

\begin{prop}\label{prepp1}
	Let $\pi:H \to A$ be a partial representation of $H$ in an algebra $A$. Then the following statements are equivalent:
	\begin{enumerate}\Not{}
		\item $\pi(h_1) \pi(S(h_2)) = \pi(h_1 S(h_2)) =\pi(\El(h)), ~ \forall h \in H$;
            \label{prepp1-1}
		\item $\pi(S(h_1))\pi(h_2) = \pi(S(h_1) h_2)=\pi(\Er(h)), ~ \forall h \in H$;
            \label{prepp1-2}
		\item $\pi$ is a representation.
            \label{prepp1-3}
	\end{enumerate}
\end{prop}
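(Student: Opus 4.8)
The plan is to establish the cycle of implications $\eqref{prepp1-3} \Rightarrow \eqref{prepp1-1} \Rightarrow \eqref{prepp1-2} \Rightarrow \eqref{prepp1-3}$, using \autoref{6-equiv} as the bridge to convert the ``counit'' conditions back into the representation property. The easiest direction is $\eqref{prepp1-3} \Rightarrow \eqref{prepp1-1}$: if $\pi$ is an algebra morphism, then $\pi(h_1)\pi(S(h_2)) = \pi(h_1 S(h_2))$ is immediate from multiplicativity, and since $h_1 S(h_2) = \Ve_t(h)$ by axiom (vi) of the weak Hopf algebra definition, both equal $\pi(\El(h))$. The analogous computation gives $\eqref{prepp1-3} \Rightarrow \eqref{prepp1-2}$, so I will actually prove both ``counit'' conditions follow from being a representation, and concentrate the real work on the reverse implications.

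For the harder direction, suppose \eqref{prepp1-1} holds; I want to conclude that $\pi$ is multiplicative. The key idea is to reconstruct the product $\pi(h)\pi(k)$ from the partial-representation axioms together with the collapsing identity $\pi(h_1)\pi(S(h_2)) = \pi(\El(h))$. First I would verify the hypothesis of \autoref{6-equiv}: taking $h = 1_H$ in \eqref{prepp1-1} and using $\El(1_H) = 1_H$ together with \eqref{defprep-1} should give condition \eqref{6-equiv-2}, namely $\pi(1_1)\pi(S(1_2)) = 1_A$, so that \eqref{defprep-6} is available. Then, to show $\pi(h)\pi(k) = \pi(hk)$, I would start from $\pi(hk)$, expand $k$ via \eqref{defprep-6} as $k_1 S(k_2) k_3$, and manipulate: writing $\pi(hk) = \pi(h k_1)\pi(S(k_2))\pi(k_3)$ using \eqref{defprep-2}, then collapsing $\pi(h k_1)\pi(S(k_2))$ is not directly allowed, so instead I expect to run the computation the other way, inserting $\pi(h) = \pi(h)\cdot 1_A = \pi(h)\pi(1_1)\pi(S(1_2))$ and propagating the unit through the Sweedler indices. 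The cleanest route is likely to compute $\pi(h)\pi(k)$ by writing $\pi(k) = \pi(k_1)\pi(S(k_2))\pi(k_3)$ and applying \eqref{defprep-2} to absorb $\pi(h)\pi(k_1)\pi(S(k_2)) = \pi(h k_1)\pi(S(k_2))$, followed by using \eqref{prepp1-1} in the form $\pi(k_1 S(k_2)) = \pi(\El(k_1))$-type identities, and then collapsing the $\El$ against the remaining factors using the weak Hopf algebra coalgebra relations from \autoref{p3} and \autoref{p4}.

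The equivalence $\eqref{prepp1-1} \Leftrightarrow \eqref{prepp1-2}$ I would handle by a symmetric argument, passing through \autoref{6-equiv}: condition \eqref{prepp1-1} yields \eqref{6-equiv-2}, which \autoref{6-equiv} shows is equivalent to \eqref{6-equiv-3}, i.e. $\pi(S(1_1))\pi(1_2) = 1_A$, and this is precisely the $h = 1_H$ instance needed to launch the dual computation establishing \eqref{prepp1-2}. The main obstacle I anticipate is the bookkeeping in the core implication $\eqref{prepp1-1} \Rightarrow \eqref{prepp1-3}$: the weak setting forces careful tracking of the idempotents $\El(h), \Er(h)$ and the base algebras $H_t, H_s$, so that the naive Hopf-algebra cancellations (where $\pi(h_1)\pi(S(h_2)) = \Ve(h)1_A$ simply scales) must be replaced by the weaker collapse to $\pi(\El(h))$, and I must repeatedly invoke \eqref{p4-1}, \eqref{p4-2}, \eqref{p3-1}, \eqref{p3-2} and \autoref{lemaadicional} to move these idempotents past the counital maps without losing multiplicativity. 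I expect the identities $h_1 \otimes h_2 S(h_3) = 1_1 h \otimes 1_2$ and $S(h_1)h_2 \otimes h_3 = 1_1 \otimes h 1_2$ to be the decisive tools that let the telescoping go through.
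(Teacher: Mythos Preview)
Your overall strategy is sound, but the computation you outline for the key implication \eqref{prepp1-1} $\Rightarrow$ \eqref{prepp1-3} is misaligned with the hypothesis. When you expand $\pi(k) = \pi(k_1)\pi(S(k_2))\pi(k_3)$ via \eqref{defprep-6} and apply \eqref{defprep-2} to reach $\pi(hk_1)\pi(S(k_2))\pi(k_3)$, the factor $\pi(hk_1)\pi(S(k_2))$ is \emph{not} of the form $\pi(g_1)\pi(S(g_2))$ for any single $g \in H$, since $hk_1 \otimes k_2$ is not a coproduct. Hypothesis \eqref{prepp1-1} therefore cannot be applied there. This $k$-expansion is exactly the move that works under hypothesis \eqref{prepp1-2}, where the needed collapse is $\pi(S(k_2))\pi(k_3) = \pi(S(k_2)k_3)$; you have matched the expansion to the wrong hypothesis.

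The paper's fix is simply to expand $h$ instead: from $\pi(h)\pi(k) = \pi(h_1)\pi(S(h_2))\pi(h_3)\pi(k)$, apply \eqref{defprep-5} to merge the last two factors into $\pi(h_3 k)$, and then hypothesis \eqref{prepp1-1} applies directly to the first two, giving $\pi(h_1 S(h_2))\pi(h_3 k)$. The decisive identity is \eqref{p12-2} (rather than \eqref{p4-1}--\eqref{p4-2} as you anticipate), which converts this to $\pi(S(1_1))\pi(1_2 hk)$; one more use of \eqref{defprep-5} and \autoref{6-equiv}\eqref{6-equiv-3} finishes. The implication \eqref{prepp1-2} $\Rightarrow$ \eqref{prepp1-3} is then the mirror image (expand $k$, use \eqref{p12-1}), and the paper never attempts a direct \eqref{prepp1-1} $\Leftrightarrow$ \eqref{prepp1-2}: your proposed route through \autoref{6-equiv} only recovers the $h = 1_H$ instance of \eqref{prepp1-2}, not the full statement, so that shortcut would not close the cycle on its own. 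Finally, \autoref{lemaadicional} concerns partial module algebras, not partial representations, and plays no role in this proof.
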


\begin{proof}
	If $\pi$ is a representation, then (\ref{prepp1-1}) and (\ref{prepp1-2}) are trivial.
	
	Suppose that (\ref{prepp1-1}) holds. So, for all $h, k\in H$, we have that
	\begin{align*}
	\pi(h) \pi(k)
        \OSeq{defprep-6}&= \pi(h_1) \pi(S(h_2)) \pi(h_3)\pi(k)\\
	\OSeq{defprep-5}&= \pi(h_1) \pi(S(h_2)) \pi(h_3 k)\\
	\OSeq{prepp1-1}&= \pi(h_1 S(h_2)) \pi(h_3 k)\\
	\OSeq{p12-2}&= \pi(S(1_1)) \pi(1_2 hk)\\
	\OSeq{defprep-4}&= \pi(S(1_1)) \pi(1_2) \pi(hk)\\
	\OS{\ref{6-equiv}\eqref{6-equiv-3}}&= \pi(kh),
	\end{align*}
	then $\pi$ is a representation.
	
	Supposing now that (\ref{prepp1-2}) holds, then for all $h, k\in H$ we have that
	\begin{align*}
	\pi(h) \pi(k)
        \OSeq{defprep-6}&= \pi(h) \pi(k_1) \pi(S(k_2)) \pi(k_3)\\
	\OSeq{defprep-2}&= \pi(h k_1) \pi(S(k_2)) \pi(k_3)\\
	\OSeq{prepp1-2}&= \pi(h k_1) \pi(S(k_2) k_3)\\
	\OSeq{p12-1}&= \pi(h k 1_1) \pi(S(1_2))\\
	\OSeq{defprep-2}&= \pi(h k) \pi(1_1) \pi(S(1_2))\\
	\OS{\ref{6-equiv}\eqref{6-equiv-2}}&= \pi(h k),
	\end{align*}
	and it means that $\pi$ is a representation.
\end{proof}

In  \autoref{noHRcola} it has be shown that for any partial $H$-module algebra $A$, the action of elements in $H_s$ and, in some cases, elements of $H_t$ on $A$, behaves like a global action. The next proposition is an analogous statement for partial representations:

\begin{prop}\label{HRHLprep}
	Let $w \in H_s$, $z \in H_t$ and $\pi: H \to A$ a partial representation. Then for all $h \in H$:
	\begin{enumerate}\Not{}
		\item $\pi(w)\pi(h) = \pi(wh)$;\label{HRHLprep-1}
		\item $\pi(h)\pi(z) = \pi(hz)$;\label{HRHLprep-2}
		\item $\pi(z)\pi(h) = \pi(zh)$;\label{HRHLprep-3}
		\item $\pi(h)\pi(w) = \pi(hw)$.\label{HRHLprep-4}
	\end{enumerate}
\end{prop}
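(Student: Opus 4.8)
The plan is to begin by extracting four \emph{unit-absorption} identities from the axioms and the counit relations $\pi(1_1)\pi(S(1_2))=1_A=\pi(S(1_1))\pi(1_2)$ coming from \autoref{6-equiv} (conditions \eqref{6-equiv-2} and \eqref{6-equiv-3}). Setting $k=1_H$ in \eqref{defprep-2} and \eqref{defprep-3}, and $h=1_H$ in \eqref{defprep-4} and \eqref{defprep-5}, and using $\pi(1_H)=1_A$, I obtain, for all $h,k\in H$, the four identities $\pi(h1_1)\pi(S(1_2))=\pi(h)$, $\pi(hS(1_1))\pi(1_2)=\pi(h)$, $\pi(1_1)\pi(S(1_2)k)=\pi(k)$ and $\pi(S(1_1))\pi(1_2 k)=\pi(k)$. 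Combining the first with \eqref{p3-1} gives the splitting $\pi(z)=\pi(z1_1)\pi(S(1_2))$ for $z\in H_t$, and combining the last with \eqref{p3-2} gives $\pi(w)=\pi(S(1_1))\pi(1_2 w)$ for $w\in H_s$.

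With these in hand, \eqref{HRHLprep-2} and \eqref{HRHLprep-1} are short. For \eqref{HRHLprep-2} I would write $\pi(h)\pi(z)=\pi(h)\pi(z1_1)\pi(S(1_2))$, apply \eqref{defprep-2} with $k=z$ to reach $\pi(hz1_1)\pi(S(1_2))$, and then the first absorption identity (with $h$ replaced by $hz$) collapses this to $\pi(hz)$. Dually, for \eqref{HRHLprep-1} I would write $\pi(w)\pi(h)=\pi(S(1_1))\pi(1_2 w)\pi(h)$, apply \eqref{defprep-5} with $k=w$ to reach $\pi(S(1_1))\pi(1_2 wh)$, and the last absorption identity (with $k=wh$) gives $\pi(wh)$. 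Both go through precisely because the coproduct shape of an $H_t$- (resp.\ $H_s$-) element matches the relevant absorption identity exactly.

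The remaining two statements are the main obstacle, and I would first observe that they form a symmetric pair. The same map $\pi$ is also a partial representation of $H^{\mathrm{op,cop}}$ in $A^{\mathrm{op}}$ (the antipode of $H^{\mathrm{op,cop}}$ is again $S$, and the axiom set is stable under simultaneously reversing product and coproduct, interchanging $\Ve_t$ with $\Ve_s$, so that \eqref{defprep-2}$\leftrightarrow$\eqref{defprep-5} and \eqref{defprep-3}$\leftrightarrow$\eqref{defprep-4} while \eqref{defprep-6} is preserved). Under this correspondence $H_t$ and $H_s$ interchange and \eqref{HRHLprep-3} is carried to \eqref{HRHLprep-4}, so it suffices to treat \eqref{HRHLprep-3}. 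The difficulty is that the natural computation---splitting $\pi(z)=\pi(z1_1)\pi(S(1_2))$ and applying \eqref{defprep-4}---produces the crossed term $\pi(z1_1)\pi(S(1_2)h)$, in which $h$ now sits inside the second argument and is matched by none of the four absorption identities; every way of trying to absorb it using only \eqref{defprep-1}--\eqref{defprep-5} turns out to be circular, returning to $\pi(z)\pi(h)$.

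To break this I expect to have to use \eqref{defprep-6} essentially---consistent with the remark after \autoref{6-equiv} that \eqref{defprep-6} is an independent axiom in the weak case---since the obstruction $\pi(z1_1)\pi(S(1_2)h)=\pi(zh)$ is exactly the representation-side shadow of the relation $z\cdot(h\cdot a)=zh\cdot a$ for $z\in H_t$, which in \autoref{noHRcola} required the partial action to be \emph{symmetric}. Concretely, I would establish the representation analogue of the symmetry axiom \eqref{pma-sym} for $\pi$ by expanding $\Delta^2(z)$ for $z\in H_t$ (via \eqref{p3-1} and the unit relation $\Delta^2(1_H)=(\Delta(1_H)\otimes 1_H)(1_H\otimes\Delta(1_H))$) inside $\pi(z)=\pi(z_1)\pi(S(z_2))\pi(z_3)$ and simplifying with \eqref{p4-1}, \eqref{p4-2}, \eqref{p12-1} and \eqref{p12-2}. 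The structural inputs I would rely on are that the legs of $\Delta(1_H)$ satisfy $1_1\in H_s$ and $1_2\in H_t$ (so $S(1_1)\in H_t$ and $S(1_2)\in H_s$, by \autoref{p3} and the standard behaviour of $S$ on $H_t,H_s$), and that $\pi|_{H_t}$ and $\pi|_{H_s}$ are algebra maps, which already follow from the proven \eqref{HRHLprep-2} and \eqref{HRHLprep-1}. Once this symmetric relation is available, \eqref{HRHLprep-3} follows by the same pattern as the $H_t$-case of \autoref{noHRcola}, and \eqref{HRHLprep-4} is then immediate from the $H^{\mathrm{op,cop}}$-symmetry above. I expect the bookkeeping in the $\Delta^2(z)$ expansion and the matching of the antipode legs to be the most delicate part of the argument.
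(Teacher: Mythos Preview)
Your argument for \eqref{HRHLprep-1} and \eqref{HRHLprep-2} is correct and close in spirit to the paper's, though organized differently: you first isolate four ``unit-absorption'' identities and then feed in the coproduct shape of $w\in H_s$ (resp.\ $z\in H_t$), whereas the paper expands $\pi(w)=\pi(w_1)\pi(S(w_2))\pi(w_3)$ directly via \eqref{defprep-6} and uses $\Delta^2(w)=1_1\otimes 1_2\otimes 1_3 w$. Both routes work and are essentially equivalent.

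For \eqref{HRHLprep-3} and \eqref{HRHLprep-4} there is a genuine gap. Your $H^{\mathrm{op,cop}}$ symmetry reducing \eqref{HRHLprep-4} to \eqref{HRHLprep-3} is fine, but the proof of \eqref{HRHLprep-3} itself is not carried out. You correctly identify the obstruction $\pi(z1_1)\pi(S(1_2)h)=\pi(zh)$ and then propose to ``establish the representation analogue of \eqref{pma-sym}'' by expanding $\Delta^2(z)$ and invoking $1_1\in H_s$, $1_2\in H_t$. That last input is only valid in the weak Sweedler sense $\varepsilon_s(1_1)\otimes 1_2=1_1\otimes 1_2=1_1\otimes\varepsilon_t(1_2)$, and you never show how it, together with multiplicativity of $\pi$ on $H_t$ and $H_s$, actually closes the computation. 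The ``symmetric relation'' you allude to is not stated, and the bookkeeping you flag as ``the most delicate part'' is precisely the missing content. Expanding $\pi(z)=\pi(z_1)\pi(S(z_2))\pi(z_3)$ with $\Delta^2(z)=z1_1\otimes 1_2\otimes 1_3$ and pushing with \eqref{defprep-4}--\eqref{defprep-5} simply returns you to the crossed term.

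The paper sidesteps this with a single trick you did not consider: since $S|_{H_s}\colon H_s\to H_t$ is an isomorphism, write $z=S(w')$ with $w'\in H_s$ and expand via the \emph{dual} form of \eqref{defprep-6}, namely $\pi(S(w'))=\pi(S(w'_1))\pi(w'_2)\pi(S(w'_3))$ from \autoref{6-equiv}\eqref{6-equiv-4}. Because $\Delta^2(w')=1_1\otimes 1_2\otimes w'1_3$, this places the factor carrying $z=S(w')$ on the far right of the triple, exactly where \eqref{defprep-4} and the unit identity $\pi(S(1_1))\pi(1_2)\pi(S(1_3))=1_A$ can finish the job in four lines. The same device with $w=S(z')$, $z'\in H_t$, handles \eqref{HRHLprep-4}. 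Your expansion of $\pi(z)$ via \eqref{defprep-6} puts the $H_t$-element on the wrong side; passing through $S$ to an $H_s$-element and using \eqref{6-equiv-4} instead is what makes the sides match.
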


\begin{proof} Items (\ref{HRHLprep-1}) and (\ref{HRHLprep-2}) can be shown directly as follows:
	\begin{align*}
	\pi(w)\pi(h)
	& = \pi(w_1)\pi(S(w_2))\pi(w_3)\pi(h) \\
	& = \pi(w_1)\pi(S(w_2))\pi(w_3 h) \\
	& = \pi(1_1)\pi(S(1_2))\pi(1_3 w h) \\
	& = \pi(1_1)\pi(S(1_2))\pi(1_3) \pi(w h) \\
	& = \pi(w h)
	\end{align*}
	and
	\begin{align*}
	\pi(h)\pi(z)
	& = \pi(h)\pi(z_1)\pi(S(z_2))\pi(z_3) \\
	& = \pi(h z_1)\pi(S(z_2))\pi(z_3) \\
	& = \pi(h z 1_1)\pi(S(1_2))\pi(1_3) \\
	& = \pi(h z) \pi(1_1)\pi(S(1_2))\pi(1_3) \\
	& = \pi(h z).
	\end{align*}
	
	For the Items (\ref{HRHLprep-3}) and (\ref{HRHLprep-4}), remember that $S_R:H_s \to H_t$ and  $S_L:H_t \to H_s$ are isomorphisms, then we can suppose $w = S(z')$ and $z = S(w')$ where $w' \in H_s$ and $z' \in H_t$. Therefore, 
	\begin{align*}
	\pi(z)\pi(h)
	& = \pi(S(w'))\pi(h)\\
	& = \pi(S(w'_1))\pi(w'_2)\pi(S(w'_3))\pi(h)\\
	& = \pi(S(w'_1))\pi(w'_2)\pi(S(w'_3) h)\\
	& = \pi(S(1_1))\pi(1_2)\pi(S(w' 1_3) h)\\
	& = \pi(S(1_1))\pi(1_2)\pi(S(1_3) S(w')  h)\\
	& = \pi(S(1_1))\pi(1_2)\pi(S(1_3)) \pi(S(w')  h)\\
	& = \pi(S(w')  h)\\
	& = \pi(z h)\\
        \intertext{and}
	\pi(h)\pi(w)
	& = \pi(h)\pi(S(z'))\\
	& = \pi(h)\pi(S(z'_1))\pi(z'_2)\pi(S(z'_3))\\
	& = \pi(h S(z'_1))\pi(z'_2)\pi(S(z'_3))\\
	& = \pi(h S(1_1 z'))\pi(1_2)\pi(S(1_3))\\
	& = \pi(h S(z') S(1_1))\pi(1_2)\pi(S(1_3))\\
	& = \pi(h S(z'))\pi(S(1_1))\pi(1_2)\pi(S(1_3))\\
	& = \pi(h S(z'))\\
	& = \pi(h w),
	\end{align*}
	as desired.
\end{proof}

Now we are able to present some examples of partial representations.

\begin{prop}
	Let $A$ a symmetric partial $H$-module algebra. Then
	\begin{align*}
	\pi\colon H & \longto \End(A) \\
	h & \longmapsto \pi(h)(a) = h \Ape a
	\end{align*}
	is a partial representation of $H$ on the algebra $End(A)$.
\end{prop}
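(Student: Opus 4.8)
The plan is to evaluate the proposed map on an arbitrary $a \in A$ and read off each axiom \eqref{defprep-1}--\eqref{defprep-6} as an identity for the partial action, bearing in mind that the product in $\End(A)$ is composition, so that $(\pi(h)\pi(g))(a)=h\cdot(g\cdot a)$ and, more generally, a word in the $\pi$'s becomes an iterated action on $a$. Axiom \eqref{defprep-1} is immediate, since $\pi(1_H)(a)=1_H\cdot a=a$ by (PA2), whence $\pi(1_H)=\mathrm{id}_A$.

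For \eqref{defprep-4} I would peel off the outermost action with (PA3): $h_1\cdot\bigl(S(h_2)\cdot(k\cdot a)\bigr)=(h_1\cdot 1_A)\bigl((h_2S(h_3))\cdot(k\cdot a)\bigr)$, then use $h_2S(h_3)=\El(h_2)\in H_t$ and, since the action is symmetric, collapse $\El(h_2)\cdot(k\cdot a)=(\El(h_2)k)\cdot a$ by \autoref{noHRcola}; the right-hand side $h_1\cdot\bigl((S(h_2)k)\cdot a\bigr)$ yields the same expression $(h_1\cdot 1_A)\bigl((\El(h_2)k)\cdot a\bigr)$ directly from (PA3). Axiom \eqref{defprep-5} is the mirror image: here I use the symmetric axiom \eqref{pma-sym} in place of (PA3) together with $S(h_2)h_3=\Er(h_2)\in H_s$, and both sides reduce to $\bigl((\Er(h_2)k)\cdot a\bigr)(S(h_1)\cdot 1_A)$ via \autoref{noHRcola}. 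In both cases $S$ is treated as a coalgebra anti-morphism when computing $\Delta(S(\cdot))$.

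Axioms \eqref{defprep-2} and \eqref{defprep-3} are where the effort concentrates. For \eqref{defprep-2} I would first simplify the inner composite by (PA3) and $k_2S(k_3)=\El(k_2)$, obtaining $k_1\cdot(S(k_2)\cdot a)=(k_1\cdot 1_A)(\El(k_2)\cdot a)$; distributing $h$ by (PA1), rewriting $h_1\cdot(k_1\cdot 1_A)$ with the symmetric axiom \eqref{pma-sym}, and recombining the resulting idempotents by (PA1) reduces the left-hand side to $((h_1k_1)\cdot 1_A)\bigl(h_2\cdot(\El(k_2)\cdot a)\bigr)$, whereas (PA3) applied to the right-hand side $(hk_1)\cdot(S(k_2)\cdot a)$ gives $((h_1k_1)\cdot 1_A)\bigl((h_2\El(k_2))\cdot a\bigr)$. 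The two therefore coincide precisely when $h\cdot(z\cdot a)=(hz)\cdot a$ for $z=\El(k_2)\in H_t$, i.e. for the $H_t$-element in \emph{inner} position. This is exactly the obstacle: \autoref{noHRcola} only moves an $H_t$- or $H_s$-element through the action when it sits in the \emph{outer} slot (as happens in \eqref{defprep-4}--\eqref{defprep-5}), so I must establish the inner-position analogue separately, using the coassociativity constraint on $\Delta(1_H)$ (axiom (v)) and \autoref{lemaadicional} to absorb the ambient idempotents $g\cdot 1_A$. Axiom \eqref{defprep-3} is handled symmetrically, with the symmetric axiom replacing (PA3) and $S(k_2)k_3=\Er(k_2)\in H_s$ in place of $\El(k_2)$, and reduces to the analogous inner-position statement for $H_s$.

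Finally, for \eqref{defprep-6} I would not compute directly but invoke \autoref{6-equiv}: once \eqref{defprep-1}--\eqref{defprep-5} are in force, \eqref{defprep-6} is equivalent to \eqref{6-equiv-3}, that is $\pi(S(1_1))\pi(1_2)=1_A$. Evaluating on $a$ gives $S(1_1)\cdot(1_2\cdot a)$; since $1_1\in H_s$ and $S$ maps $H_s$ into $H_t$, the element $S(1_1)$ lies in $H_t$ and occupies the outer slot, so the symmetric case of \autoref{noHRcola} yields $S(1_1)\cdot(1_2\cdot a)=(S(1_1)1_2)\cdot a$; as $S(1_1)1_2=\Er(1_H)=1_H$ this equals $1_H\cdot a=a$ by (PA2). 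Hence $\pi(S(1_1))\pi(1_2)=\mathrm{id}_A$ and \eqref{defprep-6} follows. Thus the argument rests on the clean cases \eqref{defprep-1}, \eqref{defprep-4}, \eqref{defprep-5}, \eqref{defprep-6}, with the inner-position commutation identity needed for \eqref{defprep-2}--\eqref{defprep-3} being the one genuinely delicate point.
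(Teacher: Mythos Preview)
Your treatment of \eqref{defprep-1}, \eqref{defprep-4}, \eqref{defprep-5}, \eqref{defprep-6} is correct and essentially matches the paper (the paper verifies \eqref{6-equiv-2} rather than \eqref{6-equiv-3} for \eqref{defprep-6}, but this is immaterial).

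For \eqref{defprep-2}--\eqref{defprep-3} your reduction is right, but you overestimate the difficulty of the ``inner-position'' step and stop short of proving it. The identity $h\cdot(z\cdot a)=(hz)\cdot a$ for $z\in H_t$ is in fact immediate in a \emph{symmetric} partial module algebra: by \eqref{pma-sym} one has $h\cdot(z\cdot a)=((h_1z)\cdot a)(h_2\cdot 1_A)$, while from $\Delta(z)=1_1z\otimes 1_2$ (\autoref{p3}) and $\Delta(h)\Delta(1_H)=\Delta(h)$ one gets $(hz)\cdot a=((hz)_1\cdot a)((hz)_2\cdot 1_A)=(h_1z\cdot a)(h_2\cdot 1_A)$, so the two coincide. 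The $H_s$ case for \eqref{defprep-3} is the mirror computation using (PA3) and $\Delta(w)=1_1\otimes 1_2w$. So nothing beyond \autoref{p3} and the partial-action axioms is needed; axiom (v) of the weak bialgebra is not required here.

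The paper takes a different route for \eqref{defprep-2}--\eqref{defprep-3}: instead of carrying the element $\El(k_2)$ (resp.\ $\Er(k_2)$) forward, it immediately replaces $k_1\otimes k_2S(k_3)$ by $1_1k\otimes 1_2$ via \eqref{p4-1} and then invokes \autoref{lemaadicional} to collapse $(1_1k\cdot 1_A)(1_2\cdot a)=(k\cdot 1_A)a$; the rest is a direct expansion with (PA1) and \eqref{pma-sym}. Your route isolates a clean auxiliary lemma (the inner analogue of \autoref{noHRcola} for $H_t$ and $H_s$), which is pleasant to state and reusable; the paper's route avoids formulating that lemma at the cost of slightly longer in-line manipulations. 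Both are short once written out.
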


\begin{proof}
\eqref{defprep-1} Let $a \in A$, $\pi(1_H)(a) = 1_H \Ape a = a$.

\eqref{defprep-2} Let $h,k \in H$, then
\begin{align*}
[\pi(h) \pi(k_1) \pi(S(k_2))](a)
    &= h \Ape k_1 \Ape S(k_2) \Ape a \\
    &= h \Ape (k_1 \Ape 1_A)(k_2 S(k_3) \Ape a) \\
    \OSeq{p4-1}&= h \Ape (1_1 k \Ape 1_A)(1_2 \Ape a) \\
    \OSeq{lemaadicional}&= h \Ape (k \Ape 1_A)a \\
    &= (h_1 k \Ape 1_A)(h_2 \Ape a) \\
    &= (h_1 1_1 k \Ape 1_A)(h_2 1_2 \Ape a) \\
    &= (h_1 k_1 \Ape 1_A)(h_2 k_2 S(k_3) \Ape a) \\
    &= h k_1 \Ape [1_A(S(k_2) \Ape a)] \\
    &= h k_1 \Ape S(k_2) \Ape a \\
    &= [\pi(h k_1) \pi(S(k_2))] (a).
\end{align*}

\eqref{defprep-3} Let $h,k \in H$ and $a\in A$, then
\begin{align*}
[\pi(h) \pi(S(k_1)) \pi(k_2)](a)
    & = h \Ape S(k_1) \Ape k_2 \Ape a \\
    \OSeq{pma-sym} & = h \Ape [(S(k_2) k_3 \Ape a)(S(k_1) \Ape 1_A)]\\
    \OSeq{p12-1}&= h \Ape [(S(1_2) \Ape a)(S(k 1_1) \Ape 1_A)]\\
    & = h \Ape [(S(1_2) \Ape a)(S(1_1) S(k) \Ape 1_A)]\\
    & = h \Ape [S(1) \Ape (a(S(k) \Ape 1_A))]\\
    \OSeq{p11-4}&= h \Ape [1_H \Ape (a(S(k) \Ape 1_A))]\\
    & = h 1_H \Ape (a(S(k) \Ape 1_A))\\
    \OSeq{p11-4}&= h S(1) \Ape (a(S(k) \Ape 1_A))\\
    & = (h_1 S(1_2) \Ape a)(h_2 S(1_1) \Ape S(k) \Ape 1_A))\\
    & = (h_1 S(1_3) \Ape a)(h_2 S(1_2) \Ape 1_A)(h_3 S(1_1) S(k) \Ape 1_A))\\
    & = (h_1 S(1_2) \Ape a)(h_2 S(1_1) S(k) \Ape 1_A))\\
    & = (h_1 S(1_2) \Ape a)(h_2 S(k 1_1)\Ape 1_A))\\
    \OSeq{p12-1}&= (h_1 S(k_2)k_3 \Ape a)(h_2 S(k_1)\Ape 1_A))\\
    & = h S(k_1)\Ape (k_2 \Ape a)\\
    & = [\pi(h S(k_1)) \pi(k_2)](a)
\end{align*}

\eqref{defprep-4} Let $h,k \in H$ and $a\in A$, then
\begin{align*}
[\pi(h_1) \pi(S(h_2)) \pi(k)](a)
    &= h_1 \Ape S(h_2) \Ape k \Ape a \\
    &= (h_1 \Ape 1_A)(h_2 S(h_3) \Ape k \Ape a) \\
    \OSeq{p4-1}&=  (1_1 h \Ape 1_A)(1_2 \Ape k \Ape a)\\
    \OSeq{noHRcola}&= (1_1 \Ape h \Ape 1_A)(1_2 \Ape k \Ape a)\\
    &=(1_1 h \Ape 1_A)(1_2 \Ape 1_A)(1_3 \Ape k \Ape a)\\
    &=(1_1 h \Ape 1_A)(1_2 k \Ape a)\\
    \OSeq{p4-1}&= (h_1 \Ape 1_A)( h_2 S(h_3) k \Ape a) \\
    &= h_1 \Ape (S(h_2) k \Ape a) \\
    &= [\pi(h_1) \pi(S(h_2)k)](a)
\end{align*}

\eqref{defprep-5} Let $h,k \in H$ and $a \in A$:
\begin{align*}
[\pi(S(h_1)) \pi(h_2) \pi(k)](a)
    &= S(h_1) \Ape h_2 \Ape k \Ape a \\
    \OSeq{pma-sym} &= (S(h_2)h_3 \Ape k \Ape a)(S(h_1) \Ape 1_A) \\
    \OSeq{p12-1}&=  (S(1_2) \Ape k \Ape a)(S(h 1_1) \Ape 1_A)\\
    \OSeq{noHRcola}&=  (S(1_2) k \Ape a)(S(h 1_1) \Ape 1_A)\\
    \OSeq{p12-1}&= (S(h_2)h_3 k \Ape a)(S(h_1) \Ape 1_A) \\
    \OSeq{pma-sym}&= S(h_1) \Ape (h_2k \Ape a) \\
    &= [\pi(S(h_1)) \pi(h_2k)](a)
\end{align*}

\eqref{defprep-6} If $a \in A$,
\begin{align*}
\pi(1_1)\pi(S(1_2))(a)
    & = 1_1 \Ape S(1_2) \Ape a\\
    \OSeq{noHRcola} &= 1_1 S(1_2) \Ape a\\
    & = \El(1) \Ape a \\
    & = a\\
    & = Id_A(a)
\end{align*}
and using \autoref{6-equiv} we obtain the desired result.
\end{proof}

\begin{prop}
	Let $A$ a symmetric partial $H$-module algebra. Then
	\begin{align*}
	\pi_0:H & \longto A \underline{\#} H \\
	h & \longmapsto 1_A \underline{\#} h
	\end{align*}
	is a partial representation.
\end{prop}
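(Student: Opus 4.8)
The plan is to verify the six identities of \autoref{defprep} directly in the partial smash product $A \Sp H$, running in parallel to the computation just carried out for $\End(A)$, with the associative product of $A \Sp H$ now playing the role that composition played there. First, \eqref{defprep-1} is immediate, since $\pi_0(1_H) = 1_A \Sp 1_H = \imath(1_A)$ is the unit of $A \Sp H$. For \eqref{defprep-6} I would compute nothing new: by \autoref{6-equiv} it is enough to check any one of the equivalent conditions listed there, and the cheapest is $\pi_0(1_1)\pi_0(S(1_2)) = 1_A \Sp 1_H$, which follows from the smash multiplication rule $(a \# h)(b \# g) = a(h_1 \Ape b)\# h_2 g$ together with \eqref{p4-1}. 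This leaves the four identities \eqref{defprep-2}--\eqref{defprep-5}.

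The key computational device will be the two normal-form relations
\[
  \pi_0(h)\pi_0(k) = \imath(h_1 \Ape 1_A)\,\pi_0(h_2 k), \qquad \pi_0(h)\,\imath(a) = \imath(h_1 \Ape a)\,\pi_0(h_2),
\]
both obtained directly from the multiplication rule and the partial-action axioms (PA1)--(PA3), together with the factorization $\imath(a)\pi_0(h) = a \Sp h$ coming from \eqref{quebradosmash}. Using these I would bring each side of \eqref{defprep-2}--\eqref{defprep-5} to the canonical shape $\imath(\,\cdot\,)\,\pi_0(\,\cdot\,)$ and then match the two expressions. The matching is designed to mirror the $\End(A)$ proof step for step: the symmetry axiom \eqref{pma-sym}, in the guise $h \Ape (k \Ape 1_A) = ((h_1 k) \Ape 1_A)(h_2 \Ape 1_A)$, is used to move an inner argument next to $h$, while the antipode identities \eqref{p4-1}, \eqref{p4-2}, \eqref{p12-1} and \eqref{p12-2} rewrite the $S$-terms, at precisely the places flagged in the preceding computation.

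The genuinely new difficulty, absent in the $\End(A)$ case, is the $H$-component. There the identities are tested against a single $a \in A$ and collapse at once, whereas here the second tensor leg of the smash factors must agree as well, and since $\Delta(1_H) \neq 1_H \otimes 1_H$ the legs of $\Delta(1_H)$ produced by the antipode rewrites get spread across both the $A$-slot and the $H$-slot. I therefore expect the main obstacle to be reabsorbing a leftover idempotent factor $(h \Ape 1_A)$ into the $H$-component so that the two normal forms coincide; I would control this with \autoref{lemaadicional}, which redistributes $\Delta(1_H)$ across a product of two actions, and with \autoref{p3}, which pins down $1_1 \in H_s$ and $1_2 \in H_t$ and hence the coproducts of the legs of $1_H$. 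Once this bookkeeping is set up, I expect each of \eqref{defprep-2}--\eqref{defprep-5} to close along the same pattern as before.
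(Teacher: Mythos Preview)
Your plan is correct and matches the paper's approach: both verify \eqref{defprep-1}--\eqref{defprep-6} by direct computation in $A\Sp H$, handling \eqref{defprep-6} via \autoref{6-equiv} exactly as you describe. The only difference is organisational: the paper writes out each of \eqref{defprep-2}--\eqref{defprep-5} as a single chain of equalities rather than first isolating your normal-form identities, and at the ``reabsorption'' step it invokes \autoref{noHRcola} (that $w\cdot(h\cdot a)=wh\cdot a$ for $w\in H_s$) in place of your intended combination of \autoref{lemaadicional} and \autoref{p3}; either toolkit does the job.
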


\begin{proof}
(\ref{defprep-1}) $\pi_0(1_H) = 1_A \underline{\#} 1_H = 1_{A \underline{\#} H}$

(\ref{defprep-2}) Let $h,k \in H$,
\begin{align*}
\pi_0(h) \pi_0(k_1) \pi_0(S(k_2))
&= (1_A \Sp h)(1_A \Sp k_1)(1_A \Sp S(k_2)) \\
&= [(h_1 \Ape 1_A) \# h_2 k_1](1_A \Sp S(k_2)) \\
&= (h_1 \Ape 1_A) (h_2 k_1 \Ape 1_A) \Sp h_3 k_2 S(k_3)\\
\OSeq{p4-1}&= (h_1 \Ape 1_A) (h_2 1_1 k \Ape 1_A) \Sp h_3 1_2\\
&= (h_1 \Ape 1_A) (h_2 k \Ape 1_A) \Sp h_3\\
&= (h_1 \Ape k \ 1_A) \Sp h_2\\
&= (h_1 \Ape k \ 1_A)(h_2 \Ape 1_A) \Sp h_3\\
\OSeq{pma-sym} &= (h_1 k \Ape 1_A) \Sp h_2\\
&= (h_1 1_1 k \Ape 1_A) \Sp h_2 1_2\\
\OSeq{p4-1}&= (h_1 k_1 \Ape 1_A) \Sp h_2 k_2 S(k_3)\\
&= (1 \# h k_1)(1 \Sp S(k_2))\\
&= \pi_0(h k_1) \pi_0(S(k_2)).
\end{align*}

(\ref{defprep-3}) Let $h,k \in H$,
\begin{align*}
\pi_0(h) \pi_0(S(k_1)) \pi_0(k_2)
    &= (1_A \Sp h)(1_A \Sp S(k_1))(1_A \Sp k_2) \\
    &= [(h_1 \Ape 1_A) \# h_2S(k_1)](1_A \Sp k_2) \\
    &= (h_1 1_1 \Ape 1_A) (h_2 1_2 S(k_2) \Ape 1_A) \Sp h_3 S(k_1)k_3 \\
    &= (h_1 S(1_2) \Ape 1_A) (h_2 S(1_1) S(k_2) \Ape 1_A) \Sp h_3 S(k_1)k_3 \\
    &= (h_1 S(1_2) \Ape 1_A) (h_2 S(k_2 1_1) \Ape 1_A) \Sp h_3 S(k_1)k_3 \\
    \OSeq{p12-1} &= (h_1 S(k_3)k_4 \Ape 1_A) (h_2 S(k_2) \Ape 1_A) \Sp h_3 S(k_1)k_5 \\
    \OSeq{pma-sym} &= (h_1 S(k_2) \Ape (k_3 \Ape 1_A)) \Sp h_2 S(k_1)k_5 \\
    &= (h_1 S(k_3) \Ape 1_A) (h_2 S(k_2) k_4 \Ape 1_A)) \Sp h_3 S(k_1)k_5 \\
    &= (h_1 S(k_2) \Ape 1_A) \Sp h_2 S(k_1)k_3 \\
    &= \pi_0(h S(k_1)) \pi_0(k_2).
\end{align*}

(\ref{defprep-4}) Let $h,k \in H$,
\begin{align*}
\pi_0 (h_1) \pi_0 (S(h_2)) \pi_0 (k) &= (1_A \Sp h_1) (1_A \Sp S(h_2)) (1_A \Sp k)\\
&= (h_1 \Ape 1_A \# h_2 S(h_3)) (1_A \Sp k)\\
\OSeq{p4-1}&= (1_1 h \Ape 1_A \# 1_2) (1_A \Sp k)\\
&= (1_1 h \Ape 1_A)(1_2 \Ape 1_A)\Sp 1_3 k\\
\OSeq{pma-sym}&= 1_1 \Ape (h \Ape 1_A)\Sp 1_2 k\\
\OSeq{noHRcola}&= (1_1 h \Ape 1_A)\Sp 1_2 k\\
\OSeq{p4-1}&= (h_1 \Ape 1_A)\Sp h_2 S(h_3) k\\
&= (1_A \Sp h_1)(1_A \Sp S(h_2) k)\\
&= \pi_0 (h_1)\pi_0 (S(h_2) k)
\end{align*}

(\ref{defprep-5}) Let $h,k \in H$,
\begin{align*}
\pi_0 (S(h_1))\pi_0 (h_2) \pi_0 (k)
&= (1_A \Sp S(h_1))(1_A \Sp h_2) (1_A \Sp k)\\
&= (1_A \# S(h_1))(h_2 \Ape 1_A \Sp h_3 k)\\
&= S(h_2) \Ape (h_3 \Ape 1_A) \Sp S(h_1) h_4 k)\\
\OSeq{pma-sym}&= (S(h_3) h_4 \Ape 1_A)(S(h_2) \Ape 1_A) \Sp S(h_1) h_5 k)\\
\OSeq{p12-1}&= (S(1_2) \Ape 1_A)(S(h_2 1_1) \Ape 1_A) \Sp S(h_1) h_3 k)\\
&= (S(1_2) \Ape 1_A)(S(1_1)S(h_2) \Ape 1_A) \Sp S(h_1) h_3 k)\\
&= S(1_H) \Ape (S(h_2) \Ape 1_A) \Sp S(h_1) h_3 k)\\
\OSeq{p11-4}&= S(h_2) \Ape 1_A \Sp S(h_1) h_3 k)\\
&= (1_A \Sp S(h_1))(1_A \Sp h_2 k)\\
&= \pi_0 (S(h_1))\pi_0 (h_2 k)
\end{align*}

(\ref{defprep-6}) Note that
\begin{align*}
\pi_0(1_1)\pi_0(S(1_2))
&= (1 \Sp 1_1)(1 \Sp S(1_2)) \\
&= 1_1 \Sp 1_2 S(1_3) \\
\OSeq{p4-1}&= 1_1 \Ape 1 \Sp 1_2 \\
&= 1 \Sp 1 
\end{align*}
and using \autoref{6-equiv} we obtain the desired result.
\end{proof}

Next, we shall provide examples of partial representations of weak Hopf algebras. Firstly, we are going to construct a weak Hopf algebra that is not a standard Hopf Algebra. To do this, we will use the Sweedler's Hopf algebra framework as our basis. Our main objective is to define partial representations of this weak Hopf algebra which are not (global) representations.

\begin{ex} \label{exwha}
    Let $H_4$ be the Sweedler's Hopf algebra generated by $\Gamma = \{1,x,g,h\}$, where $x^2 = 0$, $g^2 = 1$ and $h = xg = -gx$. Define $H$ as the algebra generated by $\{e_a,f_b ~|~ a,b \in \Gamma\}$ with the product defined by $e_a e_b = e_{ab}$, $f_a f_b = f_{ab}$ and $e_a f_b = 0$.

    It is evident that $H$ forms a coalgebra with the structure defined by $\Delta(e_a) = e_{a_1} \otimes e_{a_2}$ and $\Delta(f_a) = f_{a_1} \otimes f_{a_2}$ for each $a \in \Gamma$, where $a_1 \otimes a_2$ represents the coproduct of $a$ in $H_4$. Furthermore, a counit defined by $\varepsilon(e_a) = \varepsilon(f_a) = 1$ if $a = 1$ or $a = g$, and $\varepsilon(e_a) = \varepsilon(f_a) = 0$ if $a = x$ or $a = h$, makes $H$ a weak bialgebra, which is not a bialgebra. Lastly, define $S(e_a) = e_{S(a)}$ and $S(f_a) = f_{S(a)}$ for $a \in \Gamma$, where $S(a)$ is the antipode of $a$ in $H_4$. It is clear that $H$ qualifies as a weak Hopf algebra, which is not a Hopf algebra.
\end{ex}

The example above presents a weak Hopf algebra with few terms, constructed to provide examples of partial representations. However, we highlight that it is a particular case of a more general construction for a $\Bbbk$-linear Hopf category, as seen in \cite[Proposition 7.1]{cbv}.

\begin{ex}
    Let $H$ be the weak Hopf algebra constructed in \autoref{exwha}. Let $e_1 \cdot 1_{\mathbb{K}} = f_1 \cdot 1_{\mathbb{K}} = 1$ and $e_a \cdot 1_{\mathbb{K}} = f_a \cdot 1_{\mathbb{K}} = 0$ if $a=x$, $a=g$ or $a=h$, a partial action of $H$ on its ground field. 
    It is straightforward to check that $\pi \colon H \to \mathbb{K} \Sp H$ defined by $\pi(e_a) = 1_{\mathbb{K}} \Sp e_a$ and $\pi(f_a) = 1_{\mathbb{K}} \Sp f_a$ for $a=1$ or $a=h$ and $\pi(e_a) = \pi(f_a) = 0$ if $a=g$ or $a=x$ is a partial representation of $H$. Note that $\pi$ is not a representation itself.
\end{ex}

\begin{ex}
     Consider $H$ as the weak Hopf Algebra defined in \autoref{exwha}. It is clear that $\pi \colon H \to \mathbb{K}$ is a partial representation of $H$ in its ground field $\mathbb{K}$ with $\pi(e_1) = \pi(f_1) = 1$ and $\pi(e_a) = \pi(f_a) = 0$ if $a=x$, $a=g$ or $a=h$. 
\end{ex} 

\begin{ex}
    Let $\mathbb{Q}$ be the quaternion algebra and $H$ as defined in $\autoref{exwha}$, but now we use the ground field as $\mathbb{C}$. Define $\pi \colon H \to \mathbb{Q}$ by $\pi(e_1) = \pi(f_1) = 1$, $\pi(e_g) = \pi(f_g) = 0$, and $\pi(e_x) = \pi(f_x) = i = -\pi(e_h) = -\pi(f_h)$. It is clear that $\pi$ is a partial representation of $H$ in $\mathbb{Q}$ that is not a representation.
\end{ex}

As well as in the case of partial representations of Hopf algebras (also partial representations of groups), the construction of the above examples are related by a natural transformation. To obtain this relation, we introduce the notion of covariant pair.

\begin{dfn}\label{defcp}
	Let $A$ be a partial $H$-module algebra and $B$ an algebra. A {\it covariant pair} associated to these data is a pair of maps $(\phi, \pi)$, where $\phi:A \to B$ is an algebra morphism and $\pi: H \to B$ a partial representation, such that, for all $h\in H$ and $a\in A$
	\begin{enumerate}\Not{CP}
		\item $\phi(h \Ape a) = \pi(h_1) \phi(a) \pi(S(h_2))$; \label{defcp-1}
		\item $\phi(a) \pi(S(h_1)) \pi(h_2) = \pi(S(h_1)) \pi(h_2) \phi(a)$.\label{defcp-2}
	\end{enumerate}
\end{dfn}

\begin{obs}
	If $A$ be a partial $H$-module algebra, then $(\phi_0, \pi_0)$ is a covariant pair associated, where $\phi_0:A \to A \Sp H$ is the canonical immersion.
\end{obs}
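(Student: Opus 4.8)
The plan is to use that both building blocks are already established: $\phi_0$ is an algebra monomorphism by item \ref{imersaonosmash}, and $\pi_0$ is a partial representation by the preceding proposition (which requires $A$ to be symmetric). Hence it suffices to verify the two covariant-pair axioms \eqref{defcp-1} and \eqref{defcp-2} directly in $A \Sp H$. Throughout I would work with the normal form $1_A \Sp g = (g_1 \Ape 1_A) \# g_2$ furnished by the definition of the partial smash product, the relation $a \Sp h = (a \Sp 1_H)(1_A \Sp h)$ of item \eqref{quebradosmash}, and the multiplication rule $(a \# h)(b \# g) = a(h_1 \Ape b)\# h_2 g$.

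For \eqref{defcp-1} I would expand the right-hand side $\pi_0(h_1)\,\phi_0(a)\,\pi_0(S(h_2)) = (1_A \Sp h_1)(a \Sp 1_H)(1_A \Sp S(h_2))$ from the left. Multiplying the first two factors and merging $(h_1 \Ape 1_A)(h_2 \Ape a)$ through the partial-action axiom $h \Ape (ab) = (h_1 \Ape a)(h_2 \Ape b)$ gives a term of shape $(h_1 \Ape a) \# h_2$ with a pending factor $\pi_0(S(h_3))$. Multiplying by $\pi_0(S(h_3)) = (S(h_3)_1 \Ape 1_A)\# S(h_3)_2$ and rewriting the $H$-slot through the antipode produces a factor of the form $h_{(i)} S(h_{(j)})$; the key simplification is the weak identity \eqref{p4-1}, namely $h_1 \otimes h_2 S(h_3) = 1_1 h \otimes 1_2$, which telescopes the antipode leg and forces the $H$-component down to the unit. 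After absorbing the surviving $1_1, 1_2$ by \autoref{noHRcola} and the symmetry axiom \eqref{pma-sym}, the expression should collapse to $(h \Ape a) \Sp 1_H = \phi_0(h \Ape a)$, giving \eqref{defcp-1}.

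For \eqref{defcp-2} the strategy is different: instead of commuting $\phi_0(a)$ past the product term by term, I would first put $P := \pi_0(S(h_1))\pi_0(h_2)$ into a normal form whose $H$-component lies in $H_s$. The relevant input is the weak identity \eqref{p4-2}, $S(h_1)h_2 \otimes h_3 = 1_1 \otimes h 1_2$, which exhibits the source-style leg $S(h_1)h_2$ as an element of $H_s$; by \autoref{p3}, specifically \eqref{p3-2}, such a $w \in H_s$ satisfies $\Delta(w) = 1_1 \otimes 1_2 w$. I would then show that any element of $A \Sp H$ with $H$-component in $H_s$ commutes with $\phi_0(A) = A \Sp 1_H$: writing such an element via $1_A \Sp w$ with $w \in H_s$ and computing both $(a \Sp 1_H)(1_A \Sp w)$ and $(1_A \Sp w)(a \Sp 1_H)$, both sides reduce to the same expression after invoking \autoref{lemaadicional} and the unitality $1_H \Ape a = a$. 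Transporting this commutation back through the normal form of $P$ yields \eqref{defcp-2}.

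The main obstacle is the Sweedler bookkeeping in \eqref{defcp-1}: since the antipode reverses the coproduct order, one must track carefully which legs are contracted by \eqref{p4-1} while a remaining leg still acts on $a$, and the symmetry axiom \eqref{pma-sym} has to be applied at precisely the right place for the middle factor $\phi_0(a)$ to survive intact. For \eqref{defcp-2} the delicate point is not computing $P$ but justifying that $P$ commutes with \emph{all} of $\phi_0(A)$; this is exactly where the $H_s$-membership coming from \eqref{p4-2} and \eqref{p3-2} does the essential work.
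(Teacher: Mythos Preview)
Your plan for \eqref{defcp-1} is essentially the paper's approach: expand $(1_A \Sp h_1)(a \Sp 1_H)(1_A \Sp S(h_2))$, collapse via \eqref{p4-1}, and land on $\phi_0(h \Ape a)$. One minor remark: the paper's computation needs only the multiplicativity axiom (PA1) and \eqref{p4-1}; neither \eqref{pma-sym} nor \autoref{noHRcola} is used there, so your outline is a bit heavier than necessary.

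The gap is in your strategy for \eqref{defcp-2}. You claim you can put $P = \pi_0(S(h_1))\pi_0(h_2)$ into the form $1_A \Sp w$ with $w \in H_s$ via \eqref{p4-2}. But computing $P$ in $A \Sp H$ gives
\[
P = (S(h_2) \Ape 1_A) \Sp S(h_1)h_3,
\]
which has a \emph{nontrivial} $A$-component $(S(h_2)\Ape 1_A)$, and whose $H$-slot is $S(h_1)h_3$ with the middle leg $h_2$ consumed in the $A$-slot. The identity \eqref{p4-2} controls the pattern $S(h_1)h_2 \otimes h_3$, not $S(h_1)h_3$ with $h_2$ tied up elsewhere, so it does not reduce $P$ to $1_A \Sp w$. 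And even granting that the $H$-part behaved like an $H_s$-element, your commutation lemma is stated for $1_A \Sp w$; once there is a nontrivial $A$-coefficient $c$, the equality $(c \Sp w)(a \Sp 1_H) = (a \Sp 1_H)(c \Sp w)$ would force $c$ and $a$ to commute in $A$, which is exactly what is \emph{not} assumed. In other words, your structural shortcut is either circular or simply false in this generality.

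The paper proves \eqref{defcp-2} by a direct chain of equalities on $\pi_0(S(h_1))\pi_0(h_2)\phi_0(a)$: it expands the triple product, applies the symmetric axiom \eqref{pma-sym} together with \eqref{p12-1} and $S(1_H)=1_H$ to isolate $a(S(h_2)\Ape 1_A) \Sp S(h_1)h_3$, and then uses \autoref{noHRcola} to recognise this as $\phi_0(a)\pi_0(S(h_1))\pi_0(h_2)$. The symmetry hypothesis is genuinely needed here, and it enters through this computation rather than through an $H_s$-membership argument.
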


Indeed, let $h\in H$ and $a\in A$, so

(\ref{defcp-1})
\begin{align*}
\pi_0 (h_1) \phi_0 (a) \pi_0(S(h_2))
& = (1 \Sp h_1)(a \Sp 1_H)(1 \Sp S(h_2))\\
& = (h_1 \Ape a \# h_2)(1 \Sp S(h_3))\\
& = (h_1 \Ape a)(h_2 \Ape 1_A) \Sp h_3 S(h_4)\\
& = (h_1 \Ape a) \Sp h_2 S(h_3)\\
\OSeq{p4-1} &= (1_1 h \Ape a) \Sp 1_2\\
& = (1_A \# 1_H)[(h \Ape a) \Sp 1_H]\\
& = \phi_0 (h \Ape a)
\end{align*}

(\ref{defcp-2})

\begin{align*}
\pi_0 (S(h_1)) \Pi_0 (h_2) \phi_0 (a)
& = (1_A \Sp S(h_1)) (1_A \Sp h_2) (a \Sp 1_H)\\
& = (1_A \# S(h_1)) (1_A \# h_2) (a \Sp 1_H)\\
& = (S(h_2) \Ape 1_A \# S(h_1) h_3) (a \Sp 1_H)\\
& = (S(h_3) \Ape 1_A) (S(h_2) h_4 \Ape a) \Sp S(h_1) h_5\\
& = (S(h_3) \Ape 1_A) (S(h_2) h_4 \Ape a) \Sp S(h_1) h_5\\
& = [S(h_2) \Ape (h_3 \Ape a)] \Sp S(h_1) h_4\\
\OSeq{pma-sym}&= (S(h_3) h_4 \Ape a)(S(h_2)\Ape 1_A) \Sp S(h_1) h_5\\
\OSeq{p12-1}&= (S(1_2) \Ape a)(S(h_2 1_1)\Ape 1_A) \Sp S(h_1) h_3\\
& = (S(1_2) \Ape a)(S(1_1) S(h_2) \Ape 1_A) \Sp S(h_1) h_3\\
& = S(1_H) \Ape [a(S(h_2) \Ape 1_A)] \Sp S(h_1) h_3\\
& = a(S(h_2) \Ape 1_A) \Sp S(h_1) h_3\\
& = a (1_1 S(h_2) \Ape 1_A) \Sp 1_2 S(h_1) h_3\\
\OSeq{noHRcola}&= a [1_1 \Ape (S(h_2) \Ape 1_A)] \Sp 1_2 S(h_1) h_3\\
& = (a \# 1_H)[(S(h_2) \Ape 1_A) \Sp S(h_1) h_3]\\
& = (a \Sp 1_H) (1_A \Sp S(h_1)) (1_A \Sp h_2)\\
& = \phi_0 (a) \pi_0 (S(h_1)) \Pi_0 (h_2) 
\end{align*}

\begin{lema}\label{lemacp}
	Let $(\phi, \pi)$ be a covariant pair, then for all $h \in H$
	\begin{enumerate}\Not{}
		\item $\pi(h) = \pi(h_1) \pi(S(h_2)) \pi(h_3)$ \label{lcp-1};
		\item $\pi(S(h)) = \pi(S(h_1)) \pi(h_2) \pi(S(h_3))$ \label{lcp-2}.
	\end{enumerate}
\end{lema}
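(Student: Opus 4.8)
The plan is to funnel both identities through the equivalences already recorded in \autoref{6-equiv}. Since $\pi$ satisfies (\ref{defprep-1}-\ref{defprep-5}), that proposition applies and tells us that item \eqref{lcp-1} (which is its condition \eqref{6-equiv-1}) and item \eqref{lcp-2} (which is its condition \eqref{6-equiv-4}) are each equivalent to the single normalization $\pi(1_1)\pi(S(1_2)) = 1_B$, namely condition \eqref{6-equiv-2}. Hence it suffices to establish that one normalization, and the covariance data is exactly what produces it.

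To obtain it, I would feed the unit of $H$ into the first covariance axiom \eqref{defcp-1}. Evaluating $\phi(h \Ape a) = \pi(h_1)\phi(a)\pi(S(h_2))$ at $h = 1_H$ and invoking the partial action axiom $1_H \Ape a = a$ gives
\[
\phi(a) = \pi(1_1)\,\phi(a)\,\pi(S(1_2)), \qquad \text{for all } a \in A.
\]
Specializing to $a = 1_A$ and using that the algebra morphism $\phi$ is unital, so that $\phi(1_A) = 1_B$, this collapses to $1_B = \pi(1_1)\,\pi(S(1_2))$, which is precisely condition \eqref{6-equiv-2}.

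With this normalization in hand, \autoref{6-equiv} immediately yields both claims: the equivalence of \eqref{6-equiv-2} with \eqref{6-equiv-1} gives item \eqref{lcp-1}, and the equivalence of \eqref{6-equiv-2} with \eqref{6-equiv-4} gives item \eqref{lcp-2}. Notice that only the first covariance axiom \eqref{defcp-1} is needed, while \eqref{defcp-2} plays no role. There is essentially no computation to grind through; the only real step is recognizing that the whole lemma reduces to the normalization $\pi(1_1)\pi(S(1_2)) = 1_B$, and the mild obstacle is merely spotting that substituting $h = 1_H$ and $a = 1_A$ into \eqref{defcp-1}, together with unitality of $\phi$, suffices.
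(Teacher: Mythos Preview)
Your proposal is correct and follows essentially the same route as the paper: both arguments feed $h = 1_H$, $a = 1_A$ into \eqref{defcp-1} and use unitality of $\phi$ to obtain $\pi(1_1)\pi(S(1_2)) = 1_B$. The only cosmetic difference is that the paper then writes out the short chains
\[
\pi(h) = \pi(h)\,\pi(1_1)\pi(S(1_2)) = \pi(h1_1)\pi(S(1_2)) = \pi(h_1)\pi(S(h_2)h_3) = \pi(h_1)\pi(S(h_2))\pi(h_3)
\]
(and the analogous one for \eqref{lcp-2}) by hand, whereas you observe that these are precisely the implications \eqref{6-equiv-2}\,$\Rightarrow$\,\eqref{6-equiv-1} and \eqref{6-equiv-2}\,$\Rightarrow$\,\eqref{6-equiv-4} already established in \autoref{6-equiv}, and simply cite that proposition.
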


\begin{proof}
	Note that $\phi(a) = \phi(1_H \Ape a) = \pi(1_1) \phi(a) \pi(S(1_2))$. Taking $a = 1_A$ we obtain $1_H = \pi(1_1) \pi(S(1_2))$.
	
	Firstly we show (\ref{lcp-1}):
	\begin{align*}
	\pi(h)
        &= \pi(h) 1_H \\
	&= \pi(h) \pi(1_1) \pi(S(1_2)) \\
	&= \pi(h 1_1) \pi(S(1_2)) \\
	\OSeq{p12-1}&= \pi(h_1) \pi(S(h_2) h_3) \\
	&= \pi(h_1) \pi(S(h_2)) \pi(h_3).
	\end{align*}
        
	Now, for (\ref{lcp-2}):
	\begin{align*}
	\pi(S(h))
            &= 1_H \pi(S(h)) \\
    	&= \pi(1_1) \pi(S(1_2)) \pi(S(h)) \\
    	&= \pi(1_1) \pi(S(1_2) S(h)) \\
    	&= \pi(1_1) \pi(S(h 1_2)) \\
    	\OSeq{p4-2}&= \pi(S(h_1)h_2) \pi(S(h_3)) \\
    	&= \pi(S(h_1)) \pi(h_2) \pi(S(h_3)).\qedhere
	\end{align*}
\end{proof}

\begin{teo}
	Let A be a symmetric partial $H$-module algebra, $\pi: H \to B$ a partial representation and $\phi:A \to B$ an algebra morphism such that $(\phi, \pi)$ is a covariant pair. So there exists a unique algebra morphism $\Phi: A \Sp H \to B$ such that $\Phi \circ \pi_0 = \pi$ and $\Phi \circ \phi_0 = \phi$, i.e., the following diagram is commutative:
	\[
	\xymatrix{
		H\ar[r]^\pi \ar[d]_{\pi_0}\ar@{}[dr]|<<<<{\circlearrowright}|>>>>{\circlearrowright } & B \\
		A \Sp H\ar@{.>}[ru]|\Phi & A\ar[u]_{\phi} \ar[l]^{\phi_0}
	}
	\]
\end{teo}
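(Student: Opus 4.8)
The plan is to define $\Phi$ on the generators by $\Phi(a \Sp h) = \phi(a)\,\pi(h)$ and then show that this prescription is forced, well defined, multiplicative and unital. Uniqueness comes first and is immediate: by \eqref{quebradosmash} every generator factors as $a \Sp h = (a \Sp 1_H)(1_A \Sp h) = \phi_0(a)\,\pi_0(h)$, so any algebra morphism with $\Phi\circ\phi_0 = \phi$ and $\Phi\circ\pi_0 = \pi$ must satisfy $\Phi(a \Sp h) = \Phi(\phi_0(a))\,\Phi(\pi_0(h)) = \phi(a)\,\pi(h)$; since the elements $a\Sp h$ span $A \Sp H$, this determines $\Phi$. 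The two triangle identities then hold by construction, as $\Phi(\phi_0(a)) = \phi(a)\pi(1_H) = \phi(a)$ and $\Phi(\pi_0(h)) = \phi(1_A)\pi(h) = \pi(h)$.

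Before addressing existence I would isolate the two identities that carry the argument. First, setting $a = 1_A$ in \eqref{defcp-1} gives $\phi(h \Ape 1_A) = \pi(h_1)\pi(S(h_2))$, which combined with \autoref{lemacp}\eqref{lcp-1} yields the \emph{reduction identity} $\pi(h) = \phi(h_1 \Ape 1_A)\,\pi(h_2)$. Second, starting from $\phi(h_1 \Ape b) = \pi(h_1)\phi(b)\pi(S(h_2))$ (again \eqref{defcp-1}), pushing $\phi(b)$ to the right through $\pi(S(h_2))\pi(h_3)$ by \eqref{defcp-2} and collapsing with \autoref{lemacp}\eqref{lcp-1}, one obtains the \emph{covariance identity} $\pi(h)\,\phi(b) = \phi(h_1 \Ape b)\,\pi(h_2)$.

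For well-definedness I would view $\Phi$ as induced by the linear map $\tilde\Phi\colon A\otimes H \to B$, $a\otimes h \mapsto \phi(a)\pi(h)$, and check that it descends through the $H_t$-balancing of $A\otimes_{H_t}H$; this I would verify using \eqref{defcp-1}, \autoref{p3}, and the fact that $\pi$ restricts to a genuine morphism on $H_t$ and $H_s$ (\autoref{HRHLprep}), the symmetry of $A$ entering through \autoref{noHRcola}. The reduction identity then guarantees that the resulting map agrees with $\phi(a)\pi(h)$ on each reduced generator. Multiplicativity is the heart of the proof: expanding $(a\Sp h)(b\Sp g)$ with the smash multiplication and the partial-action axioms puts it in reduced form, so $\Phi$ sends it to $\phi\big(a(h_1\Ape 1_A)(h_2\Ape b)(h_3\Ape(g_1\Ape 1_A))\big)\pi(h_4 g_2)$; on the other side I would transform $\phi(a)\pi(h)\phi(b)\pi(g)$ using successively the covariance identity, the partial-representation identity $\pi(h_2)\pi(g)=\pi(h_2)\pi(S(h_3))\pi(h_4 g)$ (which is \eqref{defprep-6} followed by \eqref{defprep-5}), then $\pi(h_2)\pi(S(h_3)) = \phi(h_2\Ape 1_A)$, and finally the reduction identity applied to $\pi(h_4 g)$. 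Unitality is clear, since $\Phi(1_A \Sp 1_H) = \phi(1_A)\pi(1_H) = 1_B$.

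The main obstacle is this last multiplicativity computation, and the difficulty is structural rather than merely lengthy: the right-hand side $\phi(a)\pi(h)\phi(b)\pi(g)$ contains the product $\pi(h')\pi(g)$ of two partial representations, which does \emph{not} collapse to $\pi(h'g)$, whereas the reduced left-hand side carries a single factor $\pi(h_4 g_2)$. Reconciling the two forces a careful interleaving of the covariance identity, the antipode-mediated identities \eqref{defprep-5}--\eqref{defprep-6}, and the merging relations $(h_1\Ape x)(h_2\Ape y)=h\Ape(xy)$ of the partial action, so that the surplus $\pi$-factors are converted precisely into the idempotents $(h_i\Ape 1_A)$ and $(h_i g_1\Ape 1_A)$ occurring in the reduced $A$-component. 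Checking that the two resulting $\phi$-arguments coincide (after the appropriate relabelling of the Sweedler indices) is the delicate bookkeeping step on which the whole construction turns.
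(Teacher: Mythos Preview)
Your proposal is correct and shares the same ingredients as the paper's proof---defining $\Phi$ via $\tilde\Phi(a\otimes h)=\phi(a)\pi(h)$, checking $H_t$-balancedness, and deducing uniqueness from the factorization \eqref{quebradosmash}---but the paper organizes the multiplicativity check more economically. The key simplification you are missing is this: once $\tilde\Phi$ has descended to a map $\Phi\colon A\#H\to B$ on the \emph{full} smash product, one can verify directly that $\Phi$ is an algebra morphism on $A\#H$ (not just on the partial subalgebra). Since $\Phi(1_A\#1_H)=1_B$, it then follows automatically that $\Phi(a\Sp h)=\Phi((a\#h)(1_A\#1_H))=\Phi(a\#h)=\phi(a)\pi(h)$, and the restriction to $A\Sp H$ is the desired morphism.

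The payoff is that the product $(a\#h)(b\#g)=a(h_1\Ape b)\#h_2g$ in $A\#H$ has no extra idempotent factors, so multiplicativity reduces to the single identity $\phi(h_1\Ape b)\,\pi(h_2g)=\pi(h)\,\phi(b)\,\pi(g)$, which the paper obtains in four lines from \eqref{defcp-1}, \eqref{defprep-5}, \eqref{defcp-2}, and \autoref{lemacp}\eqref{lcp-1}. This is exactly your ``covariance identity'' read backwards, so you already have the essential step; what you call the ``delicate bookkeeping'' of matching $(h_1\Ape 1_A)(h_2\Ape b)(h_3\Ape(g_1\Ape 1_A))$ against $(h_1\Ape b)(h_2\Ape 1_A)(h_3g_1\Ape 1_A)$ is an artifact of working with reduced generators and disappears entirely in the paper's setup. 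Your reduction identity $\pi(h)=\phi(h_1\Ape 1_A)\pi(h_2)$ is then not needed as a separate lemma---it becomes the tautology $\Phi(1_A\Sp h)=\Phi(1_A\#h)$.
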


\begin{proof}
	Define 
	\begin{align*}
	\tilde{\Phi}\colon  A \times H & \longto B\\
	(a,h) & \longmapsto \phi(a) \pi(h).
	\end{align*}
	
	Let us check that $\tilde{\Phi}$ is $H_t$ balanced. For all $z \in H_t$,
	\begin{align*}
	\tilde{\Phi}(a \triangleleft z,h)
        &= \phi(a \triangleleft z) \pi(h)\\
	&= \phi(a (S_R^{-1}(z) \Ape 1_A)) \pi(h)\\
	&= \phi(a) \phi((S_R^{-1}(z) \Ape 1_A)) \pi(h)\\
	\OSeq{defcp-1}&= \phi(a) \pi({S_R^{-1}(z)}_1) \phi( 1_A) \pi(S({S_R^{-1}(z)}_2)) \pi(h)\\
	&= \phi(a) \pi({S_R^{-1}(z)}_1) \pi(S({S_R^{-1}(z)}_2)) \pi(h)\\
	&= \phi(a) \pi({S_R^{-1}(z)}_1) \pi(S({S_R^{-1}(z)}_2) h) \\
	\OSeq{p3-2}&= \phi(a) \pi(1_1) \pi(S(S_R^{-1}(z) 1_2) h) \\
	&= \phi(a) \pi(1_1) \pi(S(1_2) S(S_R^{-1}(z)) h) \\
	&= \phi(a) \pi(1_1) \pi(S(1_2) z h) \\
	&= \phi(a) \pi(1_1) \pi(S(1_2)) \pi(z h) \\
	&= \phi(a) \pi(1_1) \phi(1_A) \pi(S(1_2)) \pi(z h) \\
	\OSeq{defcp-1}&= \phi(a) \phi(1_H \Ape 1_A) \pi(z h) \\
	&= \phi(a) \phi(1_A) \pi(z h) \\
	&= \phi(a) \pi(z h) \\
	&= \tilde{\Phi}(a,z h).
	\end{align*}
	
	Then the map 
	\begin{align*}
	\Phi: A \# H & \longto B\\
	a\# h & \longmapsto \phi(a) \pi(h)
	\end{align*}
	is well-defined. Moreover, it is an algebra morphism. In fact,
	\begin{align*}
	\Phi[(a \# h)(b \# g)]
        &= \Phi[a(h_1 \Ape b)\#h_2g] \\
	&= \phi(a(h_1 \Ape b)) \pi(h_2g) \\
	&=  \phi(a)\phi(h_1 \Ape b) \pi(h_2g) \\
	\OSeq{defcp-1}&= \phi(a)\pi(h_1) \phi(b) \pi(s(h_2)) \pi(h_3g) \\
	&= \phi(a)\pi(h_1) \phi(b) \pi(s(h_2)) \pi(h_3) \pi(g) \\
	\OSeq{defcp-2}&= \phi(a)\pi(h_1) \pi(S(h_2)) \pi(h_3) \phi(b) \pi(g) \\
	\OS{\ref{lemacp}\eqref{lcp-1}}&= \phi(a) \pi(h) \phi(b) \pi(g) \\
	&= \Phi(a \# h)\Phi(b \# g).
	\end{align*}
	
	Notice that $\Phi(1_A \# 1_H) = 1_B$. Thus $\Phi(a \Sp h) = \Phi(a \# h)$.
	
	Furthermore, $\Phi(\phi_0(a)) = \Phi(a \Sp 1_H) = \Phi(a \# 1) =  \phi(a)\pi(1_H) = \phi(a)$ and $\Phi(\pi_0(h)) = \Phi(1_A \Sp h) = \Phi(1_A \# h) = \phi(1_A) \pi(h) = \pi(h)$ and it shows the existence.
	
	For the uniqueness, suppose $\Psi:A \Sp H  \to  B$  such that $\Psi \circ \pi_0 = \pi$ and $\Psi \circ \phi_0 = \phi$. Then
	\begin{align*}
	\Psi(a \Sp h)
        &= \Psi[(a \Sp 1_H)(1_A \Sp h)] \\
	&= \Psi(a \Sp 1_H)\Psi(1_A \Sp h) \\
	&= \Psi(\phi_0(a))\Psi(\pi_0(h)) \\
	&= \phi(a)\pi(h)\\
	&= \Phi(a \Sp h).
	\end{align*}
	and it shows that $\Phi$ is  unique.	
\end{proof}

\section{The universal algebra \texorpdfstring{\(\Hp\)}{H\scriptsize par}}

In \cite{ABV}, it was constructed an algebra associated with a given Hopf algebra that factorizes partial representations by algebra morphisms. In the weak case, it is also possible to construct such an algebra, as we shall see in the following.

\begin{dfn}
	Let $H$ a weak Hopf algebra. Consider $T(H)$ the tensorial algebra, where $H$ is seen as a vector space. Take $I$ the ideal of $T(H)$ generated by the relations:
	\begin{enumerate}\Not{wHpar}\label{defHpar}
		\item $1_H - 1_{T(H)}$;\label{defHpar-1}
		\item $h \otimes k_1 \otimes S(k_2) - h k_1 \otimes S(k_2)$;\label{defHpar-2}
		\item $h \otimes S(k_1) \otimes k_2 - h S(k_1) \otimes k_2$;\label{defHpar-3}
		\item $h_1 \otimes S(h_2) \otimes k - h_1 \otimes S(h_2) k$;\label{defHpar-4}
		\item $S(h_1) \otimes h_2 \otimes k - S(h_1) \otimes h_2 k$;\label{defHpar-5}
		\item $h - h_1 \otimes S(h_2) \otimes h_3$.\label{defHpar-6}
	\end{enumerate}
	
	Then define $\Hp = \sfrac{T(H)}{I}$. We denote the coset of an element $h$ by  $[h]$.
\end{dfn}

Notice that $\Hp$ is an algebra and clearly we have that
\begin{align*}
[~]: H & \longto \Hp \\
h & \longmapsto [h]
\end{align*}
is a partial representation of $H$ on $\Hp$.

Next we show that each partial representation $\pi$ of $H$ in an algebra $B$ can be factorized by an unique algebra morphism $\hat\pi$ from $\Hp$ to $B$. This also shows that there is a one-to-one correspondence between the partial representations of $H$ and the representations of $\Hp$.

\begin{teo}\label{universaldeHpar}
	Let $\pi:H \to B$ a partial representation. Then there exist a unique $\hat \pi:\Hp \to B$ algebra morphism such that $\pi = \hat \pi \circ [~]$. In other words, the following diagram is commutative:
	\[
	\xymatrix{
		H\ar[r]^\pi \ar[d]_{[~]}\ar@{}[dr]|<<<<{\circlearrowright} & B \\
		\Hp\ar@{.>}[ru]|{\hat\pi} & 
	}
	\]
	
	Conversely, given an algebra morphism $\phi: \Hp \to B$, there is a unique partial representation $\pi_\phi:H \to B$ such that $\hat\pi_\phi = \phi$.
\end{teo}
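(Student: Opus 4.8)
The statement is the universal property of $\Hp = T(H)/I$ with respect to partial representations, together with its converse. The plan is to build $\hat\pi$ by invoking the universal property of the tensor algebra $T(H)$ and then checking that the defining relations of the ideal $I$ are killed, so that $\hat\pi$ descends to the quotient.

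\textbf{Existence of $\hat\pi$.} First I would use the universal property of the tensor algebra: since $\pi\colon H\to B$ is a $\Bbbk$-linear map and $H$ sits inside $T(H)$ as its degree-one part, there is a unique algebra morphism $\tilde\pi\colon T(H)\to B$ extending $\pi$, namely the one sending $h^{(1)}\otimes\cdots\otimes h^{(n)}\mapsto \pi(h^{(1)})\cdots\pi(h^{(n)})$. Next I would verify that $\tilde\pi$ annihilates each of the six families of generators of $I$ listed in \eqref{defHpar-1}--\eqref{defHpar-6}. This is precisely where the axioms of \autoref{defprep} enter: relation \eqref{defHpar-1} is killed by \eqref{defprep-1}; relations \eqref{defHpar-2}--\eqref{defHpar-5} are killed, one-for-one, by the defining identities \eqref{defprep-2}--\eqref{defprep-5}; and relation \eqref{defHpar-6} is killed by \eqref{defprep-6}. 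Since $\tilde\pi$ kills all generators of $I$, it kills $I$, and hence factors through an algebra morphism $\hat\pi\colon\Hp\to B$ with $\hat\pi([h])=\pi(h)$, i.e.\ $\hat\pi\circ[~]=\pi$. Uniqueness is immediate: $\Hp$ is generated as an algebra by the elements $[h]$, so any algebra morphism agreeing with $\pi$ on these elements is determined.

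\textbf{The converse.} Given an algebra morphism $\phi\colon\Hp\to B$, I would set $\pi_\phi := \phi\circ[~]\colon H\to B$. Since $[~]$ is itself a partial representation (as observed just after the definition of $\Hp$) and $\phi$ is an algebra morphism, the composite $\pi_\phi$ inherits each of the six conditions of \autoref{defprep}: applying $\phi$ to the relations satisfied by $[~]$ in $\Hp$ and using multiplicativity of $\phi$ together with $\phi(1_{\Hp})=1_B$ yields exactly \eqref{defprep-1}--\eqref{defprep-6} for $\pi_\phi$. Thus $\pi_\phi$ is a partial representation. Finally, $\widehat{\pi_\phi}$ and $\phi$ are two algebra morphisms $\Hp\to B$ that agree on the generators $[h]$ (both send $[h]\mapsto\pi_\phi(h)=\phi([h])$), so by the uniqueness established above $\widehat{\pi_\phi}=\phi$; uniqueness of $\pi_\phi$ with this property follows likewise.

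\textbf{Main obstacle.} There is no serious conceptual obstacle: the construction of $I$ was engineered so that its generators correspond bijectively to the defining axioms of a partial representation, which makes the factorization essentially tautological. The only point demanding genuine care is the step checking that $\tilde\pi$ annihilates the generators of $I$ involving the Sweedler components of the coproduct (relations \eqref{defHpar-2}--\eqref{defHpar-6}): one must be attentive that $\tilde\pi$ is applied to tensors of elements of $H$ and that the summation in Sweedler notation is respected when passing $\tilde\pi$ through a tensor of the form $h_1\otimes S(h_2)\otimes h_3$. Once this bookkeeping is done correctly, both directions and the compatibility $\widehat{\pi_\phi}=\phi$ follow formally from the universal property of $T(H)$ and the fact that $\Hp$ is generated by the $[h]$.
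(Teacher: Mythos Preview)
Your proposal is correct and follows essentially the same route as the paper: invoke the universal property of $T(H)$ to obtain an algebra map $\tilde\pi\colon T(H)\to B$, check that the six families of generators of $I$ are annihilated precisely by the six axioms \eqref{defprep-1}--\eqref{defprep-6}, and factor through the quotient; for the converse, set $\pi_\phi=\phi\circ[~]$ and conclude $\widehat{\pi_\phi}=\phi$ by uniqueness on the generators $[h]$. Your treatment is, if anything, slightly more explicit than the paper's about the bookkeeping with Sweedler sums and about why $\pi_\phi$ is a partial representation.
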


\begin{proof}
	Let $\pi:H \to B$ be a partial representation. By the universal property of $T(H)$ there is a unique algebra morphism $\pi': T(H) \to B$ such that $\pi(h) = \pi'(h)$, for all $h\in H$.
	
	Observe that $\pi'(x \otimes y \otimes z) = \pi(x)\pi(y)\pi(z)$ and since $\pi$ is a  partial representation, $\pi'(I) = 0$. Then it is clear that $\hat \pi:\Hp \to B$ defined by $\pi'$ is an algebra morphism such that $\hat\pi \circ P = \pi'$, where $P:T(H) \to \Hp$ is the canonical projection.
	
	Thus we have that $\hat{\pi}$ is the unique linear map such that
	\[
        \hat\pi \circ P (x^1 \otimes \cdots \otimes x^n) = \pi'(x^1 \otimes \cdots \otimes x^n),
        \]
	so
	\begin{align*}
	\hat\pi[x^1] \cdots \hat\pi[x^n]
	&= \hat\pi([x^1] \cdots [x^n])\\
	&= \hat\pi(P(x^1) \cdots P(x^n))\\
	&= \hat\pi \circ P(x^1 \otimes \cdots \otimes x^n)\\
	&= \pi' (x^1 \otimes \cdots \otimes x^n)\\
	&= \pi' (x^1) \cdots \pi' (x^n)\\
	&= \pi (x^1) \cdots \pi (x^n).
	\end{align*}
	
	Since $\pi(1_H) = 1_B$, it follows that $\hat\pi$ is the unique algebra morphism such that $\hat\pi \circ [~] = \pi$.
	
	Conversely, suppose that there exists $\phi: \Hp \to B$ an algebra morphism, so clearly $\phi \circ [~]: H \to B$ is a partial representation of $H$. Then, by the above shown, there is a unique $\pi_\phi: \Hp \to B$ such that $\pi_\phi \circ [~] = \phi \circ [~]$.
	
	From the uniqueness of $\pi_\phi$, we have that $\phi = \pi_\phi$.
\end{proof}

 \begin{ex}\label{ex_groupoid}
 Let $\G$ be a groupoid. The domain and the range of an element $g \in \G$ are denoted by $d(g)$ and $r(g)$, respectively, and $\G_2$ is the set of composible pairs of $\G$.  For the case in which the weak Hopf algebra is the groupoid algebra $\Bbbk \G$, $\Hp$ is the algebra generated by the Birget-Rhodes expansion $\G^{BR}$ of $\G$ \cite[Corollary 3.5]{lata2}, where defining $Y_g = \{ h \in \G : r(h) = d(g) \}$, we have $\G^{BR} = \{ (A,g) : d(g), g^{-1} \in A \subseteq Y_g\}$, which is a groupoid with partial multiplication given by
    \begin{align*}
        (A,g) \cdot (B,h) = \begin{cases}
            (B,gh), \text{ if } (g,h) \in \G_2 \text{ and } A = hB, \\
            \text{undefined, otherwise.}
        \end{cases}
    \end{align*}

 \end{ex}

Now define $\E_h = [h_1][S(h_2)]$ and $\Et_h = [S(h_1)][h_2]$ for each $h \in H$. We have some properties for these elements.

\begin{prop}\label{propee'}
	For any $h,k \in H$  we have:
	\begin{enumerate}[(a)]
		\item $\E_k [S(h)] = [S(h_1)]\E_{h_2 k}$;\label{propee'-1}
		\item $[h]\E_k = \E_{h_1k}[h_2] $;\label{propee'-2}
		\item $\E_{h_1}\E_{h_2} = \E_h $;\label{propee'-3}
		\item $\Et_k [h] = [h_1]\Et_{kh_2} $;\label{propee'-4}
		\item $[S(h)]\Et_k = \Et_{k h_1}[S(h_2)] $;\label{propee'-5}
		\item $\Et_{h_1}\Et_{h_2} = \Et_h  $;\label{propee'-6}
		\item $[h] \E_k = [h_1]\E_k\Et_{h_2} $;\label{propee'-7}
		\item $\E_k [S(h)] = \Et_{h_1}\E_k[S(h_2)] $;\label{propee'-8}
		\item $\Et_k [h] = \E_{h_1}\Et_k[h_2] $;\label{propee'-9}
		\item $[S(h)] \Et_k = [S(h_1)]\Et_k\E_{h_2} $;\label{propee'-10}
		\item $\E_h\Et_k = \Et_k\E_h $;\label{propee'-11}
		\item $\Et_{kS^{-1}(h_3)h_1}\Et_{h_2} = \Et_{h}\Et_{k}$; \label{propee'-12}
		\item  $\E_{h_1S(h_3)}\E_{h_2} = \E_h$; \label{propee'-13}
  \item If $H$ is cocommutative, $E_hE_k=E_kE_h$. \label{propee'-14}
	\end{enumerate}
 \end{prop}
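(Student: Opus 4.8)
The plan is to prove all fourteen identities by direct computation inside $\Hp$, using only that the canonical map $[~]\colon H\to\Hp$ is a partial representation. Thus I may freely invoke the defining relations \eqref{defprep-2}--\eqref{defprep-6}, their equivalent forms in \autoref{6-equiv} (including the $S^{-1}$-version \eqref{6-equiv-1'} once the antipode is invertible), the globality statements of \autoref{HRHLprep}, and the weak Hopf identities of \autoref{p9} and \autoref{p3}. I would order the items so that later ones cite earlier ones: first the two multiplicativity relations (c) and (f); then the four sliding relations (a), (b), (d), (e); then the four insertion relations (g)--(j); then the commutation (k); then the convolution relations (l), (m); and finally the cocommutative statement (n).

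The two multiplicativity relations are immediate. For (c), expanding $\E_{h_1}\E_{h_2}=[h_1][S(h_2)][h_3][S(h_4)]$ and collapsing the first three legs by \eqref{defprep-6} (after regrouping by coassociativity, so that $[h_1][S(h_2)][h_3]$ becomes a single $[\,\cdot\,]$) returns $\E_h$; identity (f) is the mirror image, collapsing the \emph{middle} three legs of $\Et_{h_1}\Et_{h_2}=[S(h_1)][h_2][S(h_3)][h_4]$ by the same appeal to \eqref{defprep-6}. The four sliding relations form the computational core of the easy half: each is proved by first introducing an $[h_1][S(h_2)][h_3]$ pattern via \eqref{defprep-6}, then transporting factors with \eqref{defprep-2}--\eqref{defprep-5} and rewriting the resulting Sweedler expressions through \eqref{p4-1}, \eqref{p4-2}, \eqref{p12-1} and \eqref{p12-2}. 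The pairs (a)/(e) and (b)/(d) are interchanged by the symmetry $\E\leftrightarrow\Et$, $[h]\leftrightarrow[S(h)]$, so each pair costs only one genuine calculation.

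The insertion relations (g)--(j) then follow by feeding a sliding relation into \eqref{defprep-6}: for instance $[h_1]\E_k\Et_{h_2}\overset{(b)}{=}\E_{h_1k}[h_2]\Et_{h_3}$, and the tail $[h_2]\Et_{h_3}=[h_2][S(h_3)][h_4]$ collapses by \eqref{defprep-6}, returning $\E_{h_1k}[h_2]=[h]\E_k$ by (b) read backwards; the other three are analogous. The commutation (k) is then painless: both $\E_h\Et_k$ and $\Et_k\E_h$ reduce to the common expression $\E_{h_1}\Et_k\E_{h_2}$, the former via (j) and the latter via (i), whence they coincide. The convolution identities (l), (m) are obtained by fully expanding the two $\E$'s (resp.\ $\Et$'s) and simplifying with the antipode axiom \eqref{defprep-6}, the sliding relations, and the identities of \autoref{p9}; here (l) additionally requires the invertibility of the antipode through \eqref{6-equiv-1'}. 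Finally (n) is a short computation in which cocommutativity is used to transpose the coproduct legs of $\E_h$ so that $\E_h\E_k$ and $\E_k\E_h$ can be matched.

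I expect the main obstacle to lie in the sliding relations (a), (b), (d), (e) and in the convolution identities (l), (m): these are where the genuine weak Hopf bookkeeping occurs, where the unit factors $1_1,1_2$ produced by \eqref{p4-1}--\eqref{p12-2} must be tracked carefully, and where (for (l)) the terms involving $S^{-1}$ demand extra attention. Everything else --- (c), (f), the insertion relations (g)--(j), the commutation (k) and the cocommutative identity (n) --- is essentially an assembly of these using \eqref{defprep-6} and coassociativity.
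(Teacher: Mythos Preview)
Your proposal is correct and takes essentially the same approach as the paper: every item is a direct computation in $\Hp$ using the partial-representation axioms \eqref{defprep-2}--\eqref{defprep-6} (and their equivalents in \autoref{6-equiv}) together with the weak Hopf identities of \autoref{p9}. The only minor tactical differences are that the paper's proofs of the sliding relations \ref{propee'-1}, \ref{propee'-2}, \ref{propee'-4}, \ref{propee'-5} do not begin by inserting a \eqref{defprep-6}-pattern but instead start from the longer side and collapse directly via one of \eqref{defprep-2}--\eqref{defprep-5} followed by \eqref{p4-1}/\eqref{p4-2}/\eqref{p12-1}/\eqref{p12-2}, and that the paper establishes \ref{propee'-11} by routing through \ref{propee'-5} and \ref{propee'-9} rather than by your (slightly cleaner) symmetric reduction of both sides to $\E_{h_1}\Et_k\E_{h_2}$ via \ref{propee'-9} and \ref{propee'-10}.
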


\begin{proof}
	For \ref{propee'-1} 
	\begin{align*}
	[S(h_1)] \E_{h_2k}
	&= [S(h_1)][h_2 k_1][S(h_3 k_2)] \\
	&= [S(h_1) h_2 k_1][S(h_3 k_2)] \\
	\OSeq{p4-2}&= [1_1k_1][S(h 1_2 k_2)] \\
	&= [k_1][S(h k_2)] \\
	&= [k_1][S(k_2) S(h)] \\
	&= [k_1][S(k_2)][S(h)] \\
	&= \E_k[S(h)].
	\end{align*}
	
	For \ref{propee'-2}
	\begin{align*}
	\E_{h_1k}[h_2]
	&= [h_1k_1][S(h_2 k_2)][h_3]\\
	&= [h_1k_1][S(h_2 k_2)h_3] \\
	&= [h_1k_1][S(k_2)S(h_2)h_3]\\
	\OSeq{p12-1}&= [h 1_1k_1][S(k_2) S(1_2)] \\
	&= [h k_1][S(k_2)] \\
	&= [h][k_1][S(k_2)] \\
	&= [h] \E_k.
	\end{align*}
	
	For \ref{propee'-3}
	\begin{align*}
	\E_{h_1}\E_{h_2}
	&= [h_1][S(h_2)][h_3][S(h_4)] \\
	&= [h_1][S(h_2)]\\
	&= \E_h.
	\end{align*}
	
	For \ref{propee'-4}
	\begin{align*}
	[h_1]\Et_{kh_2}
	&= [h_1][S(k_1h_2)][k_2h_3]\\
	&= [h_1S(k_1 h_2)][k_2h_3]\\
	&= [h_1S(h_2)S(k_1)][k_2h_3]\\
	\OSeq{p12-2}&= [S(1_1)S(k_1)][k_2 1_2 h] \\
	&= [S(k_1 1_1)][k_2 1_2 h] \\
	&= [S(k_1)][k_2 h] \\
	&= [S(k_1)][k_2][h] \\
	&= \Et_k [h].
	\end{align*}
	
	For \ref{propee'-5}
	\begin{align*}
	\Et_{k h_1}[S(h_2)]
	&= [S(k_1 h_1)][k_2 h_2][S(h_3)]\\
	&= [S(k_1 h_1)][k_2 h_2 S(h_3)]\\
	\OSeq{p4-1}&= [S(k_1 1_1 h)][k_2 1_2]\\
	&= [S(k_1 h)][k_2]\\
	&= [S(h) S(k_1)][k_2]\\
	&= [S(h)][S(k_1)][k_2]\\
	&= [S(h)] \Et_k.
	\end{align*}
	
	For \ref{propee'-6}
	\begin{align*}
	\Et_{h_1}\Et_{h_2}
	&= [S(h_1)][h_2][S(h_3)][h_4]\\
	&= [S(h_1)][h_2] \\
	&= \Et_h.
	\end{align*}
	
	For \ref{propee'-7}
	\begin{align*}
	[h]\E_k
	\OSeq{propee'-2}&= [h_1]\E_{h_1k}[h_2]\\
	&= [h_1]\E_{h_1k}[h_2][S(h_3)][h_4]\\
	\OSeq{propee'-2}&= [h_1]\E_k[S(h_2)][h_3]\\
	&= [h_1]\E_k \Et_{h_2}.
	\end{align*}
	
	For \ref{propee'-8}
	\begin{align*}
	\E_k [S(h)]
	\OSeq{propee'-1}&= [S(h_1)]\E_{h_2 k} \\
	&= [S(h_1)][h_2][S(h_3)]\E_{h_4 k}\\
	\OSeq{propee'-1}&= [S(h_1)][h_2]\E_k [S(h_3)]\\
	&= \Et_{h_1} \E_k [S(h_2)].
	\end{align*}
	
	For \ref{propee'-9}
	\begin{align*}
	\Et_k [h]
	\OSeq{propee'-4}&= [h_1]\Et_{kh_2} \\
	&= [h_1][S(h_2)][h_3]\Et_{kh_4} \\
	\OSeq{propee'-4}&= [h_1][S(h_2)]\Et_k[h_3] \\
	&= \E_{h_1}\Et_k[h_3].
	\end{align*}
	
	For \ref{propee'-10}
	\begin{align*}
	[S(h)]\Et_k
	\OSeq{propee'-5}&= \Et_{k h_1}[S(h_2)]\\
	&= \Et_{k h_1}[S(h_2)][h_3][S(h_4)]\\
	\OSeq{propee'-5}&= [S(h_1)]\Et_k[h_2][S(h_3)]\\
	&= [S(h_1)]\Et_k \E_{h_2}.
	\end{align*}
	
	For \ref{propee'-11}
	\begin{align*}
	\E_h \Et_k
	&= [h_1][S(h_2)] \Et_k \\
	\OSeq{propee'-5}&= [h_1] \Et_{k h_2}[S(h_3)] \\
	&= [h_1] \Et_{k h_2}[S(h_3)][h_4][S(h_5)] \\
	\OSeq{propee'-5}&= \E_{h_1} \Et_k [h_2][S(h_3)] \\
	\OSeq{propee'-9}&= \Et_k [h_1][S(h_2)] \\
	&= \Et_k \E_h.
	\end{align*}
	
	For \ref{propee'-12}
	\begin{align*}
	\Et_{kS^{-1}(h_3)h_1}\Et_{h_2}
	&= [S(h_1)h_6S(k_1)][k_2S^{-1}(h_5)h_2][S(h_3)][h_4]\\
	&= [S(h_1)S((kS^{-1}(h_3))_1)][(kS^{-1}(h_3))_2][h_2]\\
	&= [S(h_1)][S((kS^{-1}(h_3))_1)][(kS^{-1}(h_3))_2][h_2]\\
	&= [S(h_1)][h_4S(k_1)][k_2S^{-1}(h_3)h_2]\\
	&= [S(h_1)][h_21_2S(k_1)][k_2S^{-1}(1_1)]\\
	&= [S(h_1)][h_2S(1_1)S(k_1)][k_21_2]\\
	&= \Et_{h}\Et_{k}
	\end{align*}
	
	For \ref{propee'-13}
	\begin{align*}
	\E_{h_1S(h_3)}\E_{h_2}
	&= [h_1S(h_6)][S^2(h_5)S(h_2)][h_3][S(h_4)] \\
	&= [h_1S(h_4)][SS(h_3)][S(h_2)]\\
	&= [h_1S(h_3)][S(S(h_2)_1][S(h_2)_2]\\
	&= [h_1][S(h_2)_1][S(S(h_2)_2)][S(h_2)_3]\\
	&= [h_1][S(h_2)] = \E_h.
	\end{align*}
    For \ref{propee'-14}
    \begin{align*}
    E_{h}E_{k} 
    &= \left[h_{1}\right]\left[S\left(h_{2}\right)\right] E_{k} \\
    \OSeq{propee'-2}&=\left[h_{1}\right] E_{S\left(h_{3}\right) k}\left[S\left(h_{2}\right)\right] \\
    \OSeq{propee'-2}&=E_{h_{1} S\left(h_{4}\right) k}\left[h_{2}\right]\left[S\left(h_{3}\right)\right] \\ 
    \OS{*}&= E_{h_{2} S\left(h_{3}\right) k} \left[h_{1}\right]\left[S\left(h_{4}\right)\right]  \\
    \OSeq{p4-1}&= E_{1_2k} \left[1_1h_{1}\right]\left[S\left(h_{2}\right)\right] \\
    \OS{*}&= E_{1_1k} \left[1_2h_{1}\right]\left[S\left(h_{2}\right)\right] \\
    \OSeq{defprep-2}&= E_{1_1k}[1_2]\left[h_{1}\right]\left[S\left(h_{2}\right)\right]\\
    \OSeq{propee'-2}&= [1_H]E_kE_h = E_{k}E_{h}
    \end{align*}
    where $\ast$ is for cocommutativity.
\end{proof}

\begin{prop}
	If $H$ has a bijective antipode then we obtain
	\begin{enumerate}[(a)]\label{propee'si}
		\item $\E_k [h] = [h_2]\E_{\Si(h_1)k}$;\label{propee'si-1}
		\item $[h]\Et_k = \Et_{k\Si(h_2)}[h_1] $;\label{propee'si-2}
		\item $\E_k [h] = \Et_{\Si(h_2)}\E_k[h_1] $;\label{propee'si-3}
		\item $[h] \Et_k = [h_2]\Et_k\E_{\Si(h_1)} $;\label{propee'si-4}
	\end{enumerate}
\end{prop}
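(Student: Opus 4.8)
The plan is to deduce each of the four identities from its counterpart in \autoref{propee'} by the single reparametrisation $h \mapsto \Si(h)$, exploiting two features of a bijective antipode. First, since $S\circ\Si=\mathrm{id}_H$, every symbol $S(\Si(h))$ collapses to $h$. Second, $\Si$ reverses the coproduct: starting from the standard anti-coalgebra identity $\Delta(S(h))=S(h_2)\otimes S(h_1)$, replacing $h$ by $\Si(h)$ and applying $\Si\otimes\Si$ to both sides yields
\[
\Si(h)_1\otimes\Si(h)_2=\Si(h_2)\otimes\Si(h_1).
\]
Verifying this flipped-coproduct formula is the only genuinely structural input; everything else is bookkeeping with the Sweedler legs.

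Concretely, for (a) I would take item (a) of \autoref{propee'}, namely $\E_k[S(h)]=[S(h_1)]\E_{h_2k}$, and substitute $h\mapsto\Si(h)$. The left-hand side becomes $\E_k[S(\Si(h))]=\E_k[h]$, while on the right the reversed coproduct turns $[S(\Si(h)_1)]\E_{\Si(h)_2 k}$ into $[S(\Si(h_2))]\E_{\Si(h_1)k}=[h_2]\E_{\Si(h_1)k}$, which is exactly the claimed identity. The remaining three statements arise in precisely the same manner: (b) from item (e) of \autoref{propee'}, (c) from item (h), and (d) from item (j). In each case the substitution $h\mapsto\Si(h)$, followed by the two simplifications $S\Si=\mathrm{id}$ and the reversed coproduct, transports the already-established relation to the asserted one.

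The main obstacle is conceptual rather than computational: one must be certain that $\Si$ reverses the coproduct exactly as $S$ does, since it is this leg-reversal that converts the factor $[S(h_1)]$ on the right of items (a), (e), (h), (j) into $[h_2]$, and simultaneously rewrites the subscripts $\E_{h_2k}$, $\Et_{kh_1}$, etc. in terms of $\Si$. No partial-representation axioms beyond those already invoked in \autoref{propee'} are required, because the present identities are nothing more than the earlier ones re-expressed along the bijection $\Si$. I would therefore write out the derivation of (a) in full and then remark that (b)--(d) follow verbatim from items (e), (h) and (j), respectively.
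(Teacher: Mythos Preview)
Your proposal is correct and matches the paper's own proof essentially verbatim: the paper simply states that the result follows by substituting $\Si(h)$ for $h$ in items \ref{propee'-1}, \ref{propee'-5}, \ref{propee'-8} and \ref{propee'-10} of \autoref{propee'}, which is precisely the reparametrisation $h\mapsto\Si(h)$ you describe, together with $S\circ\Si=\mathrm{id}$ and the reversed-coproduct identity $\Si(h)_1\otimes\Si(h)_2=\Si(h_2)\otimes\Si(h_1)$.
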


\begin{proof}
	It is straightforward using $\Si(h)$ in items \ref{propee'-1}, \ref{propee'-5}, \ref{propee'-8} and \ref{propee'-10} of \autoref{propee'}.
\end{proof}

Consider $\A$ defined as the subalgebra of $\Hp$ generated by the elements $\E_h$, that is, 
\[
\A = \langle \E_h \in \Hp |~ h\in H \rangle.
\]

We aim to show that $\A$ is a symmetric partial $H$-module algebra and then construct the partial smash product $\A \Sp H$.

Since
\[[h_1]\E_k[S(h_2)] \overset{\ref{propee'-2}}= \E_{h_1 k} [h_2] [S(h_3)] = \E_{h_1 k} \E_{h_2}\]
then the following linear map
\begin{align*}
\Ape: H \otimes \A & \longto \A\\
h \otimes a & \longmapsto [h_1] a [S(h_2)]
\end{align*}
is clearly well defined.

\begin{prop}
	With the notations as defined above, $\A$ is a symmetric partial $H$-module algebra with partial action given by $\Ape$.
\end{prop}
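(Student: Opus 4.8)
The plan is to verify directly the three defining axioms of a partial $H$-module algebra together with the symmetry condition \eqref{pma-sym}, all for the action $h\Ape a=[h_1]a[S(h_2)]$. I would begin by recording the unit and one preliminary computation. First, $\E_{1_H}=[1_1][S(1_2)]=1_{\Hp}$ by \autoref{6-equiv}, so $1_\A=\E_{1_H}$ lies in $\A$ and is its unit. Second, I would extend the displayed identity $[h_1]\E_k[S(h_2)]=\E_{h_1k}\E_{h_2}$ to arbitrary words: a short induction using \eqref{propee'-2} gives
\[
h\Ape(\E_{k^1}\cdots\E_{k^n})=\E_{h_1k^1}\E_{h_2k^2}\cdots\E_{h_nk^n}\E_{h_{n+1}},
\]
which in particular confirms that $\Ape$ takes values in $\A$.

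For multiplicativity $h\Ape(ab)=(h_1\Ape a)(h_2\Ape b)$ I would expand the right-hand side as $[h_1]a[S(h_2)][h_3]b[S(h_4)]$ and observe that the inner block $[S(h_2)][h_3]$ is an $\Et$-element. By \eqref{propee'-11} every such $\Et_g$ commutes with each generator $\E_h$, hence with all of $\A$; sliding this central block past $b$ turns the expression into $[h_1]ab\,[S(h_2)][h_3][S(h_4)]$, and the trailing triple collapses to $[S(h_2)]$ by the identity $[S(g_1)][g_2][S(g_3)]=[S(g)]$ of \autoref{6-equiv}\eqref{6-equiv-4}. This produces exactly $[h_1]ab[S(h_2)]=h\Ape(ab)$.

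The delicate point is the unit axiom $1_H\Ape a=a$, which in the weak setting is genuinely nontrivial since $\Delta(1_H)\neq 1_H\otimes 1_H$. On a single generator it closes up cleanly: using that $1_1\in H_s$ and $1_2\in H_t$ (so $S(1_2)\in H_s$), \autoref{HRHLprep} lets me absorb $[1_1][k_1]=[1_1k_1]$ and $[S(k_2)][S(1_2)]=[S(1_2k_2)]$, after which $1_1k_1\otimes 1_2k_2=\Delta(1_H)\Delta(k)=\Delta(k)$ collapses everything to $[k_1][S(k_2)]=\E_k$. The main obstacle is to carry this through for a general product $\E_{k^1}\cdots\E_{k^n}$: here the two legs of $\Delta(1_H)$ sit at opposite ends of the word, entangled through the intervening generators, so the one-step absorption no longer pairs them up. I expect to resolve this by induction on $n$, threading the legs of $\Delta(1_H)$ through the product by means of coassociativity and the idempotency of $\Delta(1_H)$ together with \autoref{HRHLprep}, reducing each step to the single-generator case just treated.

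Finally, the compatibility $h\Ape(k\Ape a)=(h_1\Ape 1_\A)((h_2k)\Ape a)$ and its symmetric partner \eqref{pma-sym} are direct, if lengthy, computations: both sides of each expand into words in $[\,\cdot\,]$, and one matches them using \eqref{defprep-4}, \eqref{defprep-5}, \eqref{defprep-6}, the coproduct identities \eqref{p4-1}, \eqref{p4-2}, \eqref{p12-1}, \eqref{p12-2}, and the $\E$/$\Et$ relations collected in \autoref{propee'}. For the symmetric axiom I would lean on the mirror $\Et$-family \eqref{propee'-4}, \eqref{propee'-5}, \eqref{propee'-6}, \eqref{propee'-9}, \eqref{propee'-10}, which encode the left–right symmetry built into the defining relations of $\Hp$; this is precisely what upgrades the partial action from merely partial to symmetric.
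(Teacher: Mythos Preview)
Your treatment of multiplicativity and of the two compatibility axioms (PA3) and (PA4) follows the same lines as the paper: both rely on the commutation $\Et_g\,a=a\,\Et_g$ for $a\in\A$ (which is the content of \autoref{propee'}\ref{propee'-11}) together with the defining relations of $\Hp$ and \autoref{6-equiv}\eqref{6-equiv-4}. On those points you and the paper agree.

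The divergence is at the unit axiom $1_H\Ape a=a$, and here your plan is weaker than the paper's. Your base case does work---the assertion $\Delta(1_H)\in H_s\otimes H_t$ is correct (it follows from \eqref{p4-1} and \eqref{p4-2} with $h=1_H$, which give $1_1\otimes\Ve_t(1_2)=1_1\otimes 1_2$ and $\Ve_s(1_1)\otimes 1_2=1_1\otimes 1_2$), though you state it without justification. The real issue is the inductive step: once you absorb $[1_1]$ into the leftmost bracket via \autoref{HRHLprep} and $[S(1_2)]$ into the rightmost, the two legs of $\Delta(1_H)$ are trapped at opposite ends of the word with no evident mechanism to cancel them; ``threading via coassociativity and idempotency'' is a hope, not an argument, and I could not make it close. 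The paper sidesteps all of this by re-using the \emph{same} $\Et$-commutation you already deployed for multiplicativity: one writes
\[
[1_1]\,a\,[S(1_2)]=[1_1]\,\Et_{1_2}\,a\,[S(1_3)]
\]
by inserting via \eqref{defprep-6}, commutes $\Et_{1_2}$ past $a\in\A$, and then collapses the resulting string of brackets in $1_i$'s and $S(1_j)$'s using only \eqref{defprep-2}, \eqref{defprep-6}, and the weak-Hopf identity $\Delta^2(1_H)=(\Delta(1_H)\otimes 1_H)(1_H\otimes\Delta(1_H))$. This handles every $a\in\A$ uniformly with no induction. So your plan is not wrong in spirit, but it leaves the hardest step unfinished precisely where the tool you already have in hand would close it immediately.
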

\begin{proof}
	First of all, notice that from \autoref{propee'}-\ref{propee'-11}, for any $a\in \A$ and any $h\in H$, it follows that
	
	\begin{equation}
	   \Et\!_h ~ a = a ~ \Et\!_h. \label{e'a=ae'}
	\end{equation}

	Let $a$ be an element in $\A$. Hence
	\begin{align*}
	1_H \Ape a
	&= [1_1] a [S(1_2)]\\
	&= [1_1] \Et_{1_2} a [S(1_3)]\\
	\OSeq{e'a=ae'}&= [1_1] a \Et_{1_2} [S(1_3)]\\
	&= [1_1] a [S(1_2)][1_3][S(1_4)]\\
	&= [1_1] a [S(1_2)][1_3 1_{1'}][S(1_{2'})]\\
	&= [1_1] a [S(1_2)][1_3] [1_{1'}][S(1_{2'})]\\
	&= [1_1] a \Et_{1_2}\\
	\OSeq{e'a=ae'}&= [1_1] \Et_{1_2} a\\
	&= a
	\end{align*}
	
	Let $a$ and $b$ be two elements in $\A$ and $h$ in $H$. Hence
	\begin{align*}
	h \Ape ab
	&= [h_1] ab [S(h_2)]\\
	\OSeq{defprep-6}&= [h_1] \Et_{h_2} ab [S(h_3)]\\
	\OSeq{e'a=ae'}&= [h_1] a \Et_{h_2} b [S(h_3)]\\
	&= [h_1] a [S(h_2)][h_3] b [S(h_4)]\\
	&= (h_1 \Ape a) (h_2 \Ape b).
	\end{align*}
	
	Now let $h,k \in H$ and $a\in \A$. Then
	\begin{align*}
	h \Ape (k \Ape a)
	&= [h_1][k_1]a[S(k_2)][S(h_2)]\\
	\OSeq{defprep-6}&= [h_1][k_1]\Et_{k_2}a[S(k_3)][S(h_2)]\\
	\OSeq{e'a=ae'}&= [h_1][k_1]a\Et_{k_2}[S(k_3)][S(h_2)]\\
	&= [h_1][k_1]a\Et_{k_2}[S(k_3)S(h_2)]\\
	\OSeq{e'a=ae'}&= [h_1][k_1]\Et_{k_2}a[S(h_2 k_2)]\\
	\OSeq{defprep-6}&= [h_1]\Et_{h_2}[k_1]a[S(h_3 k_2)]\\
	&= [h_1][S(h_2)][h_3][k_1]a[S(h_4 k_2)]\\
	&= [h_1][S(h_2)][h_3 k_1]a[S(h_4 k_2)]\\
	&= (h_1 \Ape 1_A)(h_2 k \Ape a).
	\end{align*}
	Moreover,
	\begin{align*}
	h \Ape (k \Ape a)
	&= [h_1][k_1]a[S(k_2)][S(h_2)]\\
	\OSeq{defprep-6}&= [h_1][k_1]\Et_{k_2}a[S(k_3)][S(h_2)]\\
	&= [h_1k_1]\Et_{k_2}a[S(k_3)][S(h_2)]\\
	\OSeq{e'a=ae'}&= [h_1k_1]a\Et_{k_2}[S(k_3)][S(h_2)]\\
	\OS{\ref{6-equiv}\eqref{6-equiv-4}}&= [h_1k_1]a[S(k_2)][S(h_2)]\\
	\OS{\ref{6-equiv}\eqref{6-equiv-4}}&= [h_1k_1]a[S(k_2)][S(h_2)][h_3][S(h_4)]\\
	&= [h_1k_1]a[S(k_2)S(h_2)][h_3][S(h_4)]\\
	&= [h_1k_1]a[S(h_2 k_2)][h_3][S(h_4)]\\
	&= (h_1k \Ape a)(h_2\Ape 1_A).
	\end{align*}
	And it concludes the statement.
\end{proof}

Since $\A$ is a symmetric partial $H$-module algebra, we have that the partial smash product $\A \Sp H$ that is a unital algebra (c.f. \cite{CPQS}*{Section~6}).

With the above construction we have the following result.

\begin{teo}
	Let $H$ be a weak Hopf algebra with invertible antipode $S$. Then the universal algebra $\Hp$ is isomorphic to $\A \Sp H$, as algebras.
\end{teo}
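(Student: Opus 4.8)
The plan is to construct mutually inverse algebra morphisms between $\Hp$ and $\A \Sp H$. Since both algebras enjoy universal properties — $\Hp$ via \autoref{universaldeHpar} and $\A \Sp H$ via the covariant-pair theorem established earlier — I would not attempt to write an isomorphism by hand on generators and check it is bijective directly; instead I would exploit these universal properties to produce the two maps and then argue that the compositions are identities by evaluating on generators.

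First I would build the map $\Theta \colon \Hp \to \A \Sp H$. Recall that $\pi_0 \colon H \to \A \Sp H$, $h \mapsto 1_{\A} \Sp h$, is a partial representation (this was verified in the earlier proposition for any symmetric partial $H$-module algebra, and $\A$ is such). By the universal property of $\Hp$ in \autoref{universaldeHpar}, there is a unique algebra morphism $\Theta = \widehat{\pi_0} \colon \Hp \to \A \Sp H$ with $\Theta([h]) = 1_{\A} \Sp h$ for all $h \in H$. Next I would build a map in the opposite direction. Here the covariant-pair theorem is the natural tool: I take $B = \Hp$, the partial module algebra $\A$, the partial representation $[~] \colon H \to \Hp$, and the inclusion $\phi \colon \A \hookrightarrow \Hp$ (the structural embedding of the subalgebra $\A$). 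I would need to check that $(\phi, [~])$ is a covariant pair, i.e.\ that it satisfies \eqref{defcp-1} and \eqref{defcp-2}. Condition \eqref{defcp-1} reads $\phi(h \Ape a) = [h_1]\,\phi(a)\,[S(h_2)]$, which is immediate from the very definition of the action $h \Ape a = [h_1]\,a\,[S(h_2)]$; condition \eqref{defcp-2} says $\phi(a)$ commutes with $[S(h_1)][h_2] = \Et_h$, and this is exactly equation \eqref{e'a=ae'} proved above for elements $a \in \A$. Granting the covariant pair, the theorem yields a unique algebra morphism $\Psi \colon \A \Sp H \to \Hp$ with $\Psi(1_{\A} \Sp h) = [h]$ and $\Psi(a \Sp 1_H) = a$ for $a \in \A$.

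It then remains to verify that $\Theta$ and $\Psi$ are mutually inverse. The composition $\Psi \circ \Theta \colon \Hp \to \Hp$ is an algebra morphism sending $[h] \mapsto \Psi(1_{\A} \Sp h) = [h]$; since $\Hp$ is generated as an algebra by the elements $[h]$, this forces $\Psi \circ \Theta = \mathrm{id}_{\Hp}$. (Alternatively this follows directly from the uniqueness clause of \autoref{universaldeHpar}, as $\Psi \circ \Theta$ is an algebra morphism factoring the partial representation $[~]$.) For the other composite $\Theta \circ \Psi \colon \A \Sp H \to \A \Sp H$, I would check it fixes generators of both types: on $1_{\A} \Sp h$ it gives $\Theta([h]) = 1_{\A} \Sp h$, and on $a \Sp 1_H$, writing a typical generator $a = \E_{k} = [k_1][S(k_2)]$ (and products thereof), it gives $\Theta(\E_k) = \Theta([k_1])\,\Theta([S(k_2)]) = (1_{\A} \Sp k_1)(1_{\A} \Sp S(k_2))$, which I must recognize as $\E_k \Sp 1_H$ inside $\A \Sp H$. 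By \eqref{quebradosmash} every element of $\A \Sp H$ is a sum of products $(a \Sp 1_H)(1_{\A} \Sp h)$, so agreement on these two families of generators suffices.

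The main obstacle I expect is the verification in the last composite that $\Theta(\E_k) = \E_k \Sp 1_H$, i.e.\ correctly identifying the image of the generators $\E_k$ under $\Theta$ as the corresponding generators of the copy of $\A$ sitting inside $\A \Sp H$. This requires being careful about two different incarnations of the symbol $\E_k$: one as an element $[k_1][S(k_2)] \in \Hp$ and one as $(1_{\A}\Sp k_1)(1_{\A}\Sp S(k_2)) = \E_k \Sp 1_H$ living in the smash product, where $\A$ is realized as the subalgebra $\A \Sp 1_H$. Making this identification precise — and confirming that the abstract symmetric partial $H$-module algebra $\A$ used to form $\A \Sp H$ is canonically the same as the subalgebra $\phi(\A) \subseteq \Hp$ so that the two universal constructions are compatible — is the delicate bookkeeping point; the invertibility hypothesis on $S$ is what guarantees all the structural isomorphisms (notably $S_R, S_L$ and the $\Si$-relations of the preceding proposition) needed for $\A \Sp H$ and its partial action to be well-behaved.
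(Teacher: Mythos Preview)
Your proposal is correct and follows essentially the same route as the paper: build $\Theta$ from the universal property of $\Hp$ applied to $\pi_0$, build $\Psi$ from the covariant-pair universal property applied to the pair $(\iota,[~])$, and check the two composites are identities on generators, the only nontrivial computation being $\Theta(\E_k) = \E_k \Sp 1_H$, which the paper carries out explicitly via \eqref{p4-1} and \eqref{lemaadicional}. One small remark: your closing comment slightly overstates the role of the invertible-antipode hypothesis in \emph{this} proof --- none of the displayed computations here actually invoke $S^{-1}$; the hypothesis is rather needed for the surrounding structural results (e.g.\ \autoref{propee'si} and the Hopf algebroid structure in the next section).
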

\begin{proof}
	Since $A$ is a symmetric partial $H$-module algebra, the map
	\begin{align*}
	\pi_\A: H & \longto \A \Sp H\\
	h & \longmapsto 1_\A \Sp h
	\end{align*}
	is a partial representation. Then, by the universal property of $\Hp$, we have that there exists a unique algebra map
	\[\hat\pi: \Hp \longto \A \Sp H\]
	such that $\hat\pi \circ [~] = \pi_\A$, so $\hat\pi[h] = 1_\A \Sp h$.
	
	On the other hand, the inclusion map $\imath:\A \to \Hp$ is an algebra map and $[~]:H \to \Hp$ is a partial representation. Moreover, by the action defined in $\A$ and since $\E_h\Et_k = \Et_k\E_h$, it follows that $(\imath,[~])$ is a covariant pair.
	
	Therefore, by the universal property of the partial smash there exists a unique algebra morphism
	\begin{align*}
	\Phi\colon \A \Sp H & \longto \Hp\\
	a \Sp h & \longmapsto \imath(a) [h] = a[h].
	\end{align*}
	
	Then, $\Phi(\hat\pi([h])) = \Phi(1 \Sp h) = 1_\A [h] = [h]$. To calculate the another composition, we will make the computation for a basic element $\E_g \in \A$
	\begin{align*}
	\hat\pi(\Phi (\E_g \Sp 1_H))
	&= \hat\pi (\E_g)\\
	&= \hat\pi[g_1]\hat\pi[S(g_2)]\\
	&= (1 \Sp g_1)(1 \Sp S(g_2))\\
	&= (g_1 \Ape 1) \Sp g_2 S(g_3)\\
	\OSeq{p4-1}&= (1_1 g \Ape 1) \Sp 1_2\\
	\OSeq{lemaadicional}&= (1_1 \Ape (g \Ape 1)) \Sp 1_2\\
	&= (1 \Sp 1)(g \Ape 1_\A \Sp 1_H)\\
	&= (g \Ape 1_\A) \Sp 1_H\\
	&= [g_1][S(g_2)]\Sp 1_H\\
	&= \E_g\Sp 1_H.
	\end{align*}
	
	Now if $a = \prod_{i=1}^{n} \E_{h^i} \in \A$, so
	\begin{align*}
	\hat\pi(\Phi (a \Sp h))
	&= \hat\pi(\Phi \left(\prod_i \E_{h^i} \Sp h \right) )\\
	\OSeq{quebradosmash}&= \hat\pi(\Phi \left( \prod_i \E_{h^i} \Sp 1 \right) (1_\A \Sp h) )\\
	\OSeq{imersaonosmash}&= \hat\pi(\Phi \left( \prod_i \left( \E_{h^i} \Sp 1 \right) (1_\A \Sp h) \right))\\
	&= \prod_i \hat\pi(\Phi \left( \E_{h^i} \Sp 1 \right))  \hat\pi(\Phi (1_\A \Sp h))\\
	&= \prod_i \left( \E_{h^i} \Sp 1 \right) \hat\pi[h]\\
	\OSeq{imersaonosmash}&= \left( \prod_i \E_{h^i} \Sp 1 \right) (1_\A \Sp h)\\
	&= (a \Sp 1)(1_\A \Sp h)\\
	\OSeq{quebradosmash}&= a \Sp h.
	\end{align*}
	
	Therefore, $\Hp$ and $\A \Sp H$ are isomorphic as algebras.
\end{proof}

\section{\texorpdfstring{$\Hp$}{H\scriptsize par} is a Hopf Algebroid}
\def\op{{\mathop{{}^{\text{op}}}}}
\def\copop{{\mathop{{}^{\text{cop~op}}}}}
\def\S{\mathop{\mathcal{S}}\mathop{}}

Given a weak Hopf algebra with invertible antipode, as well as in the ``strong" case, one can prove that $\Hp$ is a Hopf algebroid. We start recalling the definition of Hopf algebroid.

\begin{dfn}\label{algebroid}
\cite{bohm} Given a $\Bbbk$-algebra $A$, a left (resp. right) bialgebroid over $A$ is
given by the data $(\Ha, A, s, t, \underline{\Delta}_l, \underline{\epsilon}_l)$ (resp. $(\Ha, A, \tilde{s}, \tilde{t}, \underline{\Delta}_r, \underline{\epsilon}_r)$ such that:
\begin{enumerate}
    \item $\Ha$ is a $\Bbbk$-algebra;
    \item The map $s$ (resp. $\tilde{s}$) is a morphism of algebras between $A$ and $\Ha$, and the map $t$ (resp. $\tilde{t}$) is an anti-morphism of algebras between $A$ and $\Ha$. Their images commute. By the maps $s, t$ (resp. $\tilde{s}$, $\tilde{t}$) the algebra $\Ha$
inherits a structure of $A$-bimodule given by $a \triangleright h \triangleleft b = s(a)t(b)h$ (resp. $a \triangleright h \triangleleft b = h\tilde{s}(a)\tilde{t}(b))$.
    \item The triple $(\Ha, \underline{\Delta}_l, \underline{\epsilon}_l)$ (resp. $(\Ha, \underline{\Delta}_r, \underline{\epsilon}_r)$ is an $A$-coring relative to the structure of $A$-bimodule defined by $s$ and $t$ (resp. $\tilde{s}$ and $\tilde{t}$).
    \item The image of $\underline{\Delta}_l$ (resp. $\underline{\Delta}_r$) lies on the Takeuchi subalgebra $\Ha \times_A \Ha = \{\sum_i h_i \otimes k_i \in \Ha \otimes_A \Ha \mid \sum_i h_it(a) \otimes k_i = \sum_i h_i \otimes k_is(a)\}$ (resp. $\Ha {}_A\times \Ha = \{\sum_i h_i \otimes k_i \in \Ha \otimes_A \Ha \mid \sum_i \tilde{s}(a)h_i \otimes k_i = \sum_i h_i \otimes \tilde{t}(a)k_i\}$), and it is an algebra morphism.
    \item For every $h, k \in \Ha$, we have $\underline{\epsilon}_l(hk) = \underline{\epsilon}_l(hs(\underline{\epsilon}_l(k))) = \underline{\epsilon}_l(ht(\underline{\epsilon}_l(k)))$ (resp. $\underline{\epsilon}_r(hk) = \underline{\epsilon}_r(\tilde{s}(\underline{\epsilon}_r(h))k) = \underline{\epsilon}_r(\tilde{t}(\underline{\epsilon}_r(h))k))$. 
\end{enumerate}
Given two anti-isomorphic algebras $A$ and $\tilde{A}$ (ie, $A \cong \tilde{A}^{op}$), $A$ a left bialgebroid $(\Ha, A, s, t, \underline{\Delta}_l$, $\underline{\epsilon}_l)$ and $\tilde{A}$ a right bialgebroid $(\Ha, \tilde{A}, \tilde{s}, \tilde{t}, \underline{\Delta}_r, \underline{\epsilon}_r)$, a Hopf algebroid structure on $\Ha$ is given by an antipode, that is, an anti algebra homomorphism $\Sa: \Ha \longto \Ha$, such that 
\begin{enumerate}[(i)]
    \item $s \circ \underline{\epsilon}_l \circ \tilde{t} = \tilde{t}$, $t \circ \underline{\epsilon}_l \circ \tilde{s} = \tilde{s}$, $\tilde{s} \circ \underline{\epsilon}_r \circ t = t$ and $\tilde{t} \circ \underline{\epsilon}_r \circ s = s$.
    \item $(\underline{\Delta}_l \otimes_{\tilde{A}} I) \circ \underline{\Delta}_r = (I \otimes_A \underline{\Delta}_r) \circ \underline{\Delta}_l$ and $(I \otimes_{\tilde{A}} \underline{\Delta}_l) \circ \underline{\Delta}_r = (\underline{\Delta}_r \otimes_A I) \circ \underline{\Delta}_l$.
    \item $\Sa(t(a)h\tilde{t}(\tilde{b})) = \tilde{s}(\tilde{b})\Sa(h)s(a),$ for all $a \in A$, $\tilde{b} \in \tilde{A}$ and $h \in \Ha$.
    \item $\mu_{\Ha} \circ (\Sa \otimes I) \circ \underline{\Delta}_l = \tilde{s} \circ \underline{\epsilon}_r$ and $\mu_{\Ha} \circ (I \otimes \Sa) \circ \underline{\Delta}_r = s \circ \underline{\epsilon}_l$.
\end{enumerate}
\end{dfn}

Now we can define the structure of Hopf algebroid of $\Hp$. Notice that the antipode $S: H \longto H$ and its inverse define two anti-algebra maps: $\Sa: \Hp \longto \Hp$ and $\Sa': \Hp \longto \Hp$, given, respectively, by
\begin{align*}
\Sa([h^1] \cdots [h^n]) &  = [S(h^n)] \cdots [S(h^1)]\\ 
\Sa'([h^1] \cdots [h^n]) & = [S^{-1}(h^n)] \cdots [S^{-1}(h^1)].
\end{align*}

The maps $\Sa$ and $\Sa'$ show that the subalgebras $\A = \langle \E_h \in \Hp |~ h\in H \rangle$ and $\At := \langle \Et_h \in \Hp |~ h\in H \rangle$ of $\Hp$ are anti-isomorphic algebras and by \autoref{propee'}\ref{propee'-11} they commute one with each other. Now we can define the following source and target maps:
\begin{align*}
s,t: \A \longto \Hp, \quad s(a) = a \text{ and } t(a) = \Sa'(a), \forall a \in \A  \\
\tilde{s}, \tilde{t}: \At \longto \Hp, \quad \tilde{s}(\tilde{a}) = \tilde{a} \text{ and } \tilde{t}(\tilde{a}) = \Sa'(\tilde{a}), \forall \tilde{a} \in \At.
\end{align*}

Also, $\Hp$ has two bialgebroid structures: the left one, relative to the base algebra $\A$, inherited by $s,t$, and the right one, relative to the base algebra $\At$, inherited by $\tilde{s},\tilde{t}$.

For the left part, the comultiplication and the counit are given by

\begin{align*}
\underline{\Delta}_l([h^1] \cdots [h^n]) = [h^1_{1}] \cdots [h^n_{1}] \otimes_A [h^1_{2}] \cdots [h^n_{2}]\\
\underline{\epsilon}_l([h^1] \cdots [h^n]) = E_{h^1_{1}}E_{h^1_{2}h^2_{1}} \cdots E_{h^1\cdots h^{n}}
\end{align*}

For the right part, the comultiplication and the counit are given by
\begin{align*}
\underline{\Delta}_r([h^1] \cdots [h^n]) = [h^1_{1}] \cdots [h^n_{1}]\otimes_{\tilde{A}} [h^1_{2}] \cdots [h^n_{2}]\\
\underline{\epsilon}_r([h^1] \cdots [h^n]) = \tilde{E}_{h^1_{1}h^2_{1}\cdots h^n_{1}}\tilde{E}_{h^2_{2}\cdots h^n_{2}} \cdots \tilde{E}_{h^{n}}
\end{align*}

Now we can prove that $\Hp$ is a Hopf algebroid.

\begin{teo} \label{hopfalg}
    Let $H$ be a weak Hopf algebra with invertible antipode. Then the data $(\Hp$, $\A$, $\At$, $s$, $t$, $\tilde{s}$, $\tilde{t}$, $\underline{\Delta}_l$, $\underline{\Delta}_r$, $\underline{\epsilon}_l$, $\underline{\epsilon}_r)$ define a Hopf algebroid structure on $\Hp$.
\end{teo}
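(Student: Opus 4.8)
The plan is to verify, clause by clause, the requirements of \autoref{algebroid}: that $(\Hp,\A,s,t,\underline{\Delta}_l,\underline{\epsilon}_l)$ is a left bialgebroid, that $(\Hp,\At,\tilde{s},\tilde{t},\underline{\Delta}_r,\underline{\epsilon}_r)$ is a right bialgebroid, that $\A$ and $\At$ are anti-isomorphic, and that $\Sa$ is an antipode obeying (i)--(iv). Much of the scaffolding is already in place: $\A$ and $\At$ are subalgebras of $\Hp$ that commute elementwise by \autoref{propee'}\ref{propee'-11}, while $\Sa,\Sa'$ are anti-algebra endomorphisms of $\Hp$ which are mutually inverse, since $\Sa\circ\Sa'=\Sa'\circ\Sa=\mathrm{id}$ on each generator $[h]$. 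A preliminary computation I would record is that $\Sa'(\E_h)=\Et_{\Si(h)}$ and $\Sa(\E_h)=\Et_{S(h)}$, so that $t=\Sa'|_{\A}$ and $\tilde t=\Sa'|_{\At}$ actually take values in $\At$ and $\A$. Together with \autoref{propee'}\ref{propee'-11} this gives at once that the images of $s,t$ (and of $\tilde s,\tilde t$) commute, and that $\Sa$ restricts to the required anti-isomorphism $\A\cong\At^{\mathrm{op}}$.

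For the left bialgebroid I would first check that $\underline{\Delta}_l$ and $\underline{\epsilon}_l$ descend to the quotient $\Hp=\sfrac{T(H)}{I}$, i.e. that they annihilate the six generators \eqref{defHpar-1}--\eqref{defHpar-6} of $I$. For $\underline{\Delta}_l$ this reduces to multiplicativity of $\Delta$ together with the fact that, applied to any of the defining relations, the coproduct reproduces those same relations in each tensor leg; coassociativity and the $\A$-bilinearity making $(\Hp,\underline{\Delta}_l,\underline{\epsilon}_l)$ an $\A$-coring then follow from coassociativity of $\Delta$ in $H$. For $\underline{\epsilon}_l$ the point is that on a word the value telescopes, and I would use \autoref{propee'}\ref{propee'-3} and \ref{propee'-13} to collapse the resulting strings of $\E$'s; the counit identity $(\underline{\epsilon}_l\otimes I)\underline{\Delta}_l=I=(I\otimes\underline{\epsilon}_l)\underline{\Delta}_l$ is then verified on a monomial $[h^1]\cdots[h^n]$ by the same telescoping. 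It remains to see that $\mathrm{Im}(\underline{\Delta}_l)$ lands in the Takeuchi subalgebra $\Hp\times_{\A}\Hp$ and is multiplicative (axiom 4), and to confirm the counit condition $\underline{\epsilon}_l(hk)=\underline{\epsilon}_l(h\,s(\underline{\epsilon}_l(k)))=\underline{\epsilon}_l(h\,t(\underline{\epsilon}_l(k)))$ (axiom 5); both are monomial computations that proceed by pushing $\E$-factors past $[\,\cdot\,]$-factors using \autoref{propee'}\ref{propee'-2} and \ref{propee'-7}.

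The right bialgebroid is entirely parallel, and I would obtain it from the left one by symmetry: applying $\Sa$ (equivalently, swapping $S$ for $\Si$) interchanges $\E_h$ with $\Et_h$, $\A$ with $\At$, and $s,t$ with $\tilde s,\tilde t$, while the mirrored identities needed are exactly \autoref{propee'}\ref{propee'-4}--\ref{propee'-6} and \autoref{propee'}\ref{propee'-12}. Thus well-definedness, coassociativity, the counit property and the Takeuchi condition for $(\underline{\Delta}_r,\underline{\epsilon}_r)$ transport verbatim.

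For the antipode $\Sa$ I would then verify (i)--(iv) of \autoref{algebroid}. Condition (iii) is immediate from $\Sa$ being an anti-automorphism with $\Sa\circ t=s$ and $\Sa\circ\tilde t=\tilde s$; condition (i) is a direct comparison of the four composites on the generators $\E_h,\Et_h$; and condition (ii) again follows from coassociativity of $\Delta$, since both comultiplications act as $[h]\mapsto[h_1]\otimes[h_2]$ on generators. The substantive clause is (iv): on a generator one finds $\mu_{\Hp}(\Sa\otimes I)\underline{\Delta}_l([h])=[S(h_1)][h_2]=\Et_h=\tilde s(\underline{\epsilon}_r([h]))$ and dually $\mu_{\Hp}(I\otimes\Sa)\underline{\Delta}_r([h])=[h_1][S(h_2)]=\E_h=s(\underline{\epsilon}_l([h]))$, after which one propagates the identity to arbitrary monomials by induction. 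I expect this inductive extension, needed in both (iv) and axiom (5), to be the main obstacle: carrying a single antipode or counit evaluation through a word $[h^1]\cdots[h^n]$ forces one to commute the auxiliary idempotents $\E$ and $\Et$ repeatedly past the letters $[h^i]$, and the bookkeeping of these moves --- done with the transport identities \autoref{propee'}\ref{propee'-2}, \ref{propee'-4} and \ref{propee'-7}--\ref{propee'-10}, together with the commutation \autoref{propee'}\ref{propee'-11} --- is where the genuinely ``weak'' content of the argument is concentrated.
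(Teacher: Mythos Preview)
Your plan is correct and follows essentially the same route as the paper: a direct axiom-by-axiom verification of \autoref{algebroid} using the transport identities of \autoref{propee'}, with the right bialgebroid obtained by symmetry from the left. Two minor differences worth noting: the paper does not explicitly address well-definedness of $\underline{\Delta}_l,\underline{\epsilon}_l$ on the quotient (it simply computes on representatives $[h]$ or $[h][k]$ and asserts the general case follows), whereas you flag this step; conversely, the paper isolates one auxiliary identity, $E_{h_1k}E_{h_2}=E_{h_1}E_{h_2k}$, which streamlines both axiom~(5) and the inductive extension in~(iv) --- you would likely discover and use the same identity in carrying out your ``bookkeeping'' step, so you may wish to prove it up front.
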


\begin{proof} We shall remark that in some steps of the proof we will take an element $[h] \in \Hp$ or $[h][k] \in \Hp$ instead of $[h^1]\cdots[h^n] \in \Hp$, whenever the general case follows similarly to the particular case.

Now, (1) and (2) of \autoref{algebroid} easily hold.

(3) We shall verify that the triple $(\Hp, \underline{\Delta}_l, \underline{\epsilon}_l)$ is an $\A$-coring relative to the structure of $\A$-bimodule defined by $s$ and $t$. Thus we must prove that $\underline{\Delta}_l$ and $\underline{\epsilon}_l$ are morphisms of $\A$-bimodules. It follows by construction that $\underline{\epsilon}_l$ is a left $\A$-linear map. We will check that $\underline{\epsilon}_l$ is a right $\A$-linear map. The $\A$-bilinearity of $\underline{\Delta}_l$ is left to the reader.

\begin{align*}
\underline{\epsilon}_l([h] \Aid E_k)
& = \underline{\epsilon}_l(\tilde{E}_{S^{-1}(k)}[h]) = \underline{\epsilon}_l([k_2][S^{-1}(k_1)][h])\\
& = \underline{\epsilon}_l([k_2][S^{-1}(k_1)h]) = \underline{\epsilon}_l([k_2][S(k_3)][k_4][S^{-1}(k_1)h])\\
& = \underline{\epsilon}_l(E_{k_2}[k_3S^{-1}(k_1)h]) = E_{k_2}E_{k_3S^{-1}(k_1)h}\\
& = [k_3][S(k_4)][k_5S^{-1}(k_2)h_1][S(k_6S^{-1}(k_1)h_2]\\
& =  [k_3][S^{-1}(k_2)h_1][S(S^{-1}(k_1)h_2)S(k_4)]\\
& = [k_3S^{-1}(k_2)h_1][S(h_2)k_1][S(k_4)]\\
\OSeq{p4-1}&= [S^{-1}(1_2)h_1][S(h_2)1_1k_1][S(k_2)]\\
& = [S^{-1}(S(1_1))h_1][S(h_2)S(1_2)k_1][S(k_2)]\\
& = [1_1h_1][S(1_2h_2)k_1][S(k_2)]\\
& = [h_1][S(h_2)][k_1][S(k_2)] \\
& = E_hE_k = \underline{\epsilon}_l([h])E_k.
\end{align*}

Also, the coassociativity of $\underline{\Delta}_l$ easily follows by its definition. Let us check the counit property. Indeed, for $[h] \in \Hp$, 

\begin{align*}
\mu_{\Aid} \circ (I_{\Hp} \otimes_{\A}  \underline{\epsilon}_l) \circ \underline{\Delta}_l([h])
& = [h_1] \Aid \underline{\epsilon}_l([h_2]) = [h_1] \Aid E_{h_2} \\
& = t(E_{h_2})[h_1] = \tilde{E}_{S^{-1}(h_2)}[h_1] = [h_3][S^{-1}(h_2)][h_1] = [h],
\end{align*}

\[
\mu_{\Aie} \circ (\underline{\epsilon}_l \otimes_{\A}  I_{\Hp}) \circ \underline{\Delta}_l([h])
= \underline{\epsilon}_l([h_1]) \Aie [h_2] = E_{h_1}[h_2] = [h].
\]

(4) We shall prove that the image of $\underline{\Delta}_l$ lies on the Takeuchi subalgebra $\Hp \times_{\A} \Hp = \{\sum_i [h_i] \otimes [k_i] \in \Hp \otimes_{\A} \Hp \mid \sum_i [h_i]t(a) \otimes [k_i] = \sum_i [h_i] \otimes [k_i]s(a), \forall a \in \A\}$. Indeed, for $[h] \in \Hp$ and $E_k \in \A$,
\begin{align*}
[h_1] \otimes_{\A} [h_2]s(E_k)
& = [h_1] \otimes_{\A} [h_2][k_1][S(k_2)] =  [h_1] \otimes_{\A} [h_2k_1][S(k_2)]\\
& = [h_1]  \otimes_{\A} E_{h_2k_1}[h_3k_2][S(k_3)] = [h_1]  \otimes_{\A} E_{h_2k_1}[h_3k_2S(k_3)]\\
& = [h_3k_2][S^{-1}(h_2k_1)][h_1] \otimes_{\A} [h_4]\\
& = [h_3k_2][S^{-1}(k_1)S^{-1}(h_2)h_1] \otimes_{\A} [h_4]\\
& = [h_1k_2][S^{-1}(k_1)] \otimes_{\A} [h_2] = [h_1][k_2][S^{-1}(k_1)] \otimes_{\A} [h_2] \\
& = [h_1]\Sa'([k_1S(k_2)]) \otimes_{\A} [h_2] = [h_1]t(E_k) \otimes_{\A} [h_2].
\end{align*}

(5) First notice that
\begin{equation}\label{obsE}
\begin{split}
E_{h_1k}E_{h_2}
    & = [h_1k_1][S(k_2)S(h_2)][h_3][S(h_4)] = [h_1k_1][S(k_2)][S(h_2)]\\
    & = [h_1][k_1][S(h_2k_2)] = [h_1][S(h_2)][h_3][k_1][S(h_2k_2)]= E_{h_1}E_{h_2k}.
\end{split}
\end{equation}

Given \(a \in \A\) and $[h] \in \Hp$, we have that
\begin{align*}
    \underline{\epsilon}_l([h]s(a))
        &= \underline{\epsilon}_l([h_1]\tilde{E}_{h_2}s(a)) =  \underline{\epsilon}_l([h_1]s(a)\tilde{E}_{h_2})\\
        &=  \underline{\epsilon}_l([h_1]s(a)[S(h_2)][h_3])=   \underline{\epsilon}_l(([h_1]s(a)[S(h_2)]) \triangleright [h_3])\\
        &= [h_1]s(a)[S(h_2)] \underline{\epsilon}_l([h_3])= [h_1]s(a)[S(h_2)] E_{h_3}\\
        &= [h_1]s(a)[S(h_2)]= [h_1]a[S(h_2)].
\end{align*}

Now, take $x = [h], y = [k]\in \Hp$. We have
\begin{align*}
\underline{\epsilon}_l(xs(\underline{\epsilon}_l(y)))
    &= \underline{\epsilon}_l([h]s(\underline{\epsilon}_l([k]))) = [h_1]\underline{\epsilon}_l([k])[S(h_2)]\\
    & = [h_1]E_{k}[S(h_2)] \overset{\ref{propee'-2}}= E_{h_1k}[h_2][S(h_2)]  = E_{h_1k}E_{h_2}\\
    & \overset{\eqref{obsE}}{=} E_{h_1}E_{h_2k} = \underline{\epsilon}_l([h][k]).
\end{align*}

The case \(y = [k^1]\dots[k^n]\) follows by \eqref{obsE}.

(i) Let $\tilde{a} = \Et_h \in \At$. Then
\begin{align*}
s \circ \underline{\epsilon}_l \circ \tilde{t} (\tilde{a})
    & = s(\underline{\epsilon}_l(\tilde{t}(\tilde{a}))) \\
    & = s(\underline{\epsilon}_l([S^{-1}(h_2)][h_1])) = E_{S^{-1}(h_3)}E_{S^{-1}(h_2)h_1}\\
    & = [S^{-1}(h_6)][h_5][S^{-1}(h_4)h_1][S(S^{-1}(h_3)h_2)]\\
    & = [S^{-1}(h_6)][h_5][S^{-1}(h_4)][h_1][S(S^{-1}(h_3)h_2)]\\
    & = [S^{-1}(h_4)][h_1][S(h_2)h_3] = [S^{-1}(h_4)][h_1][S(h_2)][h_3]\\
    & = [S^{-1}(h_2)][h_1]  = \tilde{t}(\tilde{a}).
\end{align*}

Also,
\begin{align*}
t \circ \underline{\epsilon}_l \circ \tilde{s} (\tilde{a})
& = t(\underline{\epsilon}_l(\tilde{s}(\tilde{a}))) = t(\underline{\epsilon}_l(\tilde{a}))\\
& = t(\underline{\epsilon}_l(\tilde{E}_h)) = t(\underline{\epsilon}_l([S(h_1)][h_2])) \\
& = t(E_{S(h_2)}E_{S(h_1)h_3}) = \tilde{E}_{S^{-1}(h_3)h_1}\tilde{E}_{h_2} \\
& = [S(h_1)h_6][S^{-1}(h_5)h_2][S(h_3)][h_4]\\
& = [S(h_1)h_4][S^{-1}(h_3)][h_2] = [S(h_1)][h_2] \\
& = \tilde{E}_h = \tilde{s}(\tilde{a}).
\end{align*}

Analogously, we have that $\tilde{s} \circ \underline{\epsilon}_r \circ t = t$ and $\tilde{t} \circ \underline{\epsilon}_r \circ s = s$.

(ii) It is immediate.

(iii) Let $a = E_g \in \A$, $\tilde{b} = \tilde{E}_l \in \At$ and $[h] \in \Hp$.
\begin{align*}
\Sa(t(a)[h]\tilde{t}(\tilde{b}))
& = \Sa(t(E_g)[h]\tilde{t}(\Et_l)) = \Sa(\Sa'(E_g)[h]\Sa'(\Et_l))\\
& = \Sa([g_2][S^{-1}(g_1)][h][S^{-1}(l_2)][l_1]) \\
& = [S(l_1)][l_2][S(h)][g_1][S(g_2)] \\
& = \tilde{s}(\tilde{b})\Sa(h)s(a).
\end{align*}

(iv) Let $x = [h] [k] \in \Hp$.
\begin{align*}
\mu_{\Ha} \circ (I \otimes \Sa) \circ \underline{\Delta}_r(x)
    & = x_1  \Sa(x_2) = [h_{1}] [k_{1}]\Sa ([h_{2}] [k_{2}])\\
    & = [h_{1}] [k_{1}][S(k_{2})] [S(h_{2})] \\
    & = [h_{1}] E_{k}[S(h_{2})] \overset{\ref{propee'-1}}= [h_{1}][S(h_2)]E_{h_3k} \\
    & = E_{h_1}E_{h_2k} \\
    & = \underline{\epsilon}_l([h][k]) =  s \circ \underline{\epsilon}_l(x).
\end{align*}

The case \(y = [k^1]\dots[k^n]\) follows by \autoref{propee'}\ref{propee'-1}. 

Analogously, we have that $\mu_{\Ha} \circ (\Sa \otimes I) \circ \underline{\Delta}_l = \tilde{s} \circ \underline{\epsilon}_r$.
\end{proof}

\begin{obs}\label{Hpar-monoidal}
    Since \(\Hp\) is a Hopf algebroid, we have that \( \left( \CatHparm, \otimes_{\A}, \A \right) \) is a closed monoidal category with forgetful functor
    \[
        \mathcal {U} \colon \CatHparm \longto {}_{\A}\!\!\Cat{Mod}_{\A}
    \]
    monoidal strict that preserves internal homs.
\end{obs}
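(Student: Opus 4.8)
The plan is to obtain the statement as an instance of the general theory of (Hopf) bialgebroids, since \autoref{hopfalg} has already supplied all the structural ingredients. First I would invoke the standard fact (see \cite{bohm}) that the category of left modules over a left bialgebroid $(\mathcal{H}, A, s, t, \underline{\Delta}_l, \underline{\epsilon}_l)$ is monoidal: each left $\mathcal{H}$-module $M$ acquires an $A$-bimodule structure via $a \triangleright m \triangleleft b = s(a)t(b)m$; the product of $M$ and $N$ is the balanced tensor product $M \otimes_A N$ with the diagonal action $h \cdot (m \otimes n) = \underline{\Delta}_l(h)(m \otimes n)$; and the unit is $A$ itself as a left $\mathcal{H}$-module through $h \cdot a = \underline{\epsilon}_l(h s(a))$. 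The diagonal action is well defined exactly because $\underline{\Delta}_l$ lands in the Takeuchi subalgebra $\mathcal{H} \times_A \mathcal{H}$, which was verified in item (4) of the proof of \autoref{hopfalg}, while the associativity and unit constraints descend from coassociativity and the counit axioms of $\underline{\Delta}_l$ and $\underline{\epsilon}_l$. Specializing to $\mathcal{H} = \Hp$ with base $\A$ produces the monoidal category $(\CatHparm, \otimes_{\A}, \A)$.

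Next I would note that the forgetful functor $\mathcal{U} \colon \CatHparm \longto {}_{\A}\Cat{Mod}_{\A}$ is strictly monoidal essentially by construction: the underlying $\A$-bimodule of $M \otimes_{\A} N$ is, by definition, the $\A$-bimodule tensor product $\mathcal{U}(M) \otimes_{\A} \mathcal{U}(N)$, and the underlying $\A$-bimodule of the monoidal unit is $\A$. Hence $\mathcal{U}$ transports the monoidal data to the corresponding data in ${}_{\A}\Cat{Mod}_{\A}$ on the nose, with identity coherence isomorphisms.

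Finally, for the closed structure I would use the antipode, which is where the \emph{Hopf} algebroid hypothesis (rather than mere bialgebroid) becomes essential. Using the anti-algebra map $\Sa$ of \autoref{hopfalg} together with the antipode axioms (iii)--(iv) of \autoref{algebroid}, one endows the $\A$-bimodule hom $\underline{\mathrm{Hom}}_{\A}(M, N)$ with an $\Hp$-action, producing a right adjoint to the tensor functor and thereby a closed structure (cf. \cite{bohm}). Since this internal hom is built on the underlying $\A$-bimodule hom, $\mathcal{U}$ sends it to $\underline{\mathrm{Hom}}_{\A}(\mathcal{U}M, \mathcal{U}N)$, so $\mathcal{U}$ preserves internal homs. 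The main obstacle is precisely this closed part: checking that the formula for the $\Hp$-action on $\underline{\mathrm{Hom}}_{\A}(M, N)$ is associative and unital requires $\Sa$ to be an anti-homomorphism and the interplay of $\Sa$ with $s, t, \tilde{s}, \tilde{t}$ recorded in \autoref{algebroid}(iii)--(iv), which is exactly what the bialgebroid structure alone cannot provide.
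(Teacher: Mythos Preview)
Your proposal is correct, and in fact it supplies more than the paper does: in the paper this statement is a \emph{Remark} with no proof at all --- it is simply asserted as a known consequence of the Hopf algebroid structure established in \autoref{hopfalg}, implicitly deferring to \cite{bohm}. Your sketch makes explicit precisely the general-theory argument that the paper leaves tacit (monoidal structure from the left bialgebroid data via the Takeuchi condition, strict monoidality of the forgetful functor by construction, and closedness from the antipode), so the approaches are the same in spirit; you have just unpacked what the paper takes for granted.
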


\section{Partial modules}

This section aims to introduce the notion of a partial module over a weak Hopf algebra, and its relation with the structures developed in this paper. We define partial modules over weak Hopf algebras based on the partial representations of the weak Hopf algebra over the endomorphisms algebra, inspired by the classical connection between modules and representations. As expected, the category of partial modules over a weak Hopf algebra is isomorphic to the category of its partial representations. Moreover, we will see that a symmetric partial module algebra over a weak Hopf algebra corresponds to an algebra object in the module category over the associated universal Hopf algebroid.

Given \(M\) a \(\Bbbk\)-vector space, consider the algebra \(\End_{\Bbbk}(M)\) associated with \(M\). In order to introduce the notion of partial \(H\)-module, we will use the notion of partial representation of \(H\) over \(\End_{\Bbbk}(M)\), and the classical connection between action and representation.

Let \(\bullet \colon H \otimes M \longto M\) be a linear transformation such that the induced map
\[
\begin{alignedat}{3}
    \pi_{\bullet} \colon H & \longto & {}\End_{\Bbbk}(M)\\
    h & \longmapsto & {}\pi_{\bullet}(h) \colon M & \longto M\\
    && m & \longmapsto \pi_{\bullet}(h)(m) = h \bullet m
\end{alignedat}
\]
is a partial representation. In this context, the notion of a partial module arises as follows.

\begin{defin}
    Let \(H\) be a weak Hopf algebra. A \emph{partial \(H\)-module} is a vector space \(M\) with a linear map \(\bullet \colon H \otimes M \longto M\) (called \emph{partial action}) such that the following conditions hold, for any \(m \in M\) and \(h, k \in H\):
    \begin{enumerate}\Not{PM}
        \item \(1_H \bullet m = m\)
        \item \(h \bullet (k_1 \bullet ( S(k_2) \bullet m ) ) = h k_1 \bullet ( S(k_2) \bullet m ) \);
        \item \(h \bullet (S(k_1) \bullet ( k_2 \bullet m ) ) = h S(k_1) \bullet ( k_2 \bullet m ) \);
        \item \(h_1 \bullet (S(h_2) \bullet ( k \bullet m ) ) = h_1 \bullet ( S(h_2) k \bullet m ) \);
        \item \(S(h_1) \bullet (h_2 \bullet ( k \bullet m ) ) = S(h_1) \bullet ( h_2 k \bullet m ) \); and
        \item \(h_1 \bullet (S(h_2) \bullet ( h_3 \bullet m ) ) = h \bullet m \).
    \end{enumerate}
\end{defin}

The following results are immediate.

\begin{prop}
    Given a partial module \((M, \bullet )\), we have that \( \pi_{\bullet } \colon H \longto \End_{\Bbbk }(M) \) defined by \( \pi_{\bullet }(h)(m) = h \bullet m \) is a partial representation.
\end{prop}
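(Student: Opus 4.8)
The plan is to check directly the six defining conditions \eqref{defprep-1}--\eqref{defprep-6} of a partial representation for $\pi_{\bullet}$. First I would note that $\pi_{\bullet}$ is $\Bbbk$-linear, since $\bullet$ is linear in its $H$-variable, so $\pi_{\bullet}(h)$ is a well-defined element of $\End_{\Bbbk}(M)$ depending linearly on $h$. Since two endomorphisms of $M$ agree if and only if they coincide on every $m \in M$, it suffices to verify each of the six identities after evaluating both sides at an arbitrary $m \in M$.

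The key observation is the dictionary between composition of the operators $\pi_{\bullet}(h)$ and iterated application of $\bullet$: for all $a,b,c \in H$ and $m \in M$ one has $[\pi_{\bullet}(a)\pi_{\bullet}(b)\pi_{\bullet}(c)](m) = a \bullet (b \bullet (c \bullet m))$, with the outermost operator acting last, and the Sweedler legs of any coproduct are transported unchanged into the nested actions. Under this translation each partial representation axiom becomes exactly one of the defining axioms (PM1)--(PM6) of a partial module. Concretely, condition \eqref{defprep-1} reads $\pi_{\bullet}(1_H)(m) = 1_H \bullet m = m = \mathrm{id}_M(m)$, which is (PM1); condition \eqref{defprep-6}, evaluated at $m$, reads $h_1 \bullet (S(h_2) \bullet (h_3 \bullet m)) = h \bullet m$, which is (PM6). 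Likewise, evaluating \eqref{defprep-2} at $m$ gives $h \bullet (k_1 \bullet (S(k_2) \bullet m)) = hk_1 \bullet (S(k_2) \bullet m)$, which is (PM2), and in the same way \eqref{defprep-3}, \eqref{defprep-4}, and \eqref{defprep-5} reduce pointwise to (PM3), (PM4), and (PM5), respectively.

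I expect no substantial obstacle here: the axioms of a partial module were tailored to be precisely the pointwise transcriptions of the partial representation axioms, so the argument is essentially a verification of this correspondence. The only points demanding minor care are to confirm that the order of operator composition matches the nesting order of the iterated action and that, on each side of every identity, the coproduct components $k_1 \otimes k_2$ (or $h_1 \otimes h_2 \otimes h_3$) are matched consistently, which is immediate from the Sweedler convention adopted throughout the paper.
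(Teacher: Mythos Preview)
Your proposal is correct and matches the paper's approach: the paper states this result as ``immediate'' with no explicit proof, precisely because the axioms (PM1)--(PM6) were defined to be the pointwise transcriptions of (PR1)--(PR6), which is exactly the dictionary you spell out.
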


\begin{prop}
    Given a partial representation \( \pi \colon H \longto \End_{\Bbbk }(M) \), we have that \(\bullet_{\pi } \colon H \otimes M \longto M \) defined by \(h \bullet_{\pi } m = \pi (h)(m)\) is a partial action.
\end{prop}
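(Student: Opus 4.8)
The plan is to exploit the fact that $\bullet_\pi$ is by definition nothing but evaluation of $\pi$, so that nested partial actions collapse into composites of endomorphisms. Concretely, I would first record the dictionary: for all $h,k\in H$ and $m\in M$,
\[
h\bullet_\pi(k\bullet_\pi m)=\pi(h)\bigl(\pi(k)(m)\bigr)=\bigl(\pi(h)\pi(k)\bigr)(m),
\]
the product on the right being composition in $\End_\Bbbk(M)$. Iterating, any triple-nested expression occurring in the partial-module axioms becomes a single composite $\pi(\,\cdot\,)\pi(\,\cdot\,)\pi(\,\cdot\,)$ applied to $m$. Once this is in place, each of the six conditions defining a partial action is exactly the evaluation at $m$ of the corresponding partial-representation identity, so the whole argument reduces to transcribing \eqref{defprep-1}--\eqref{defprep-6}.

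Next I would treat the conditions one at a time. Condition (PM1) is immediate: $1_H\bullet_\pi m=\pi(1_H)(m)=m$ by \eqref{defprep-1}, since $\pi(1_H)=\mathrm{id}_M$. For (PM2) I would expand
\[
h\bullet_\pi\bigl(k_1\bullet_\pi(S(k_2)\bullet_\pi m)\bigr)=\bigl(\pi(h)\pi(k_1)\pi(S(k_2))\bigr)(m)
\]
and likewise rewrite its right-hand side as $\bigl(\pi(hk_1)\pi(S(k_2))\bigr)(m)$; the two coincide because $\pi$ satisfies \eqref{defprep-2}. The verifications of (PM3), (PM4) and (PM5) are the same maneuver applied to \eqref{defprep-3}, \eqref{defprep-4} and \eqref{defprep-5}, respectively. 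Finally, for (PM6) the left side reads $\bigl(\pi(h_1)\pi(S(h_2))\pi(h_3)\bigr)(m)$ and the right side $\pi(h)(m)$, and equality is precisely \eqref{defprep-6}.

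I do not expect a genuine obstacle here; the argument is a routine translation rather than a computation. The only point deserving attention is the bookkeeping of Sweedler indices: one must check that the coproduct components of $h$ (and of $k$) produced when the nested actions are unfolded line up exactly with those appearing in the partial-representation axioms. Linearity of $\pi$ together with the bilinearity of composition in $\End_\Bbbk(M)$ guarantees that all Sweedler summations commute past the evaluation at $m$, so no reindexing is required and the correspondence is term-by-term. This same dictionary also makes transparent why the two propositions are mutually inverse transcriptions.
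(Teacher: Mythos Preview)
Your proposal is correct and is exactly the routine translation the paper has in mind; the paper itself declares the result ``immediate'' and gives no proof. Your dictionary $h\bullet_\pi(k\bullet_\pi m)=(\pi(h)\pi(k))(m)$ together with the term-by-term matching of (PM1)--(PM6) with \eqref{defprep-1}--\eqref{defprep-6} is precisely the intended argument.
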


\begin{ex}
    Partial \(H\)-module algebras are naturally partial \(H\)-modules.
\end{ex}

\begin{defin}
A homomorphism of partial modules is a linear map respecting the partial actions.
\end{defin}
    
We denote the category of partial \(H\)-modules by \(\Catpm\).

\begin{prop}
    \label{cats-pm-Hparm}
    The categories \(\Catpm\) and \(\CatHparm\) are isomorphic. 
\end{prop}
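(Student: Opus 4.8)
The plan is to construct a pair of mutually inverse functors between \(\Catpm\) and \(\CatHparm\), leveraging the universal property of \(\Hp\) from \autoref{universaldeHpar}, the two propositions immediately above, and the standard identification of a left \(\Hp\)-module structure on a vector space \(M\) with a (unital) algebra morphism \(\Hp \to \End_{\Bbbk}(M)\).

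First I would define \(F \colon \Catpm \to \CatHparm\). On objects, given a partial module \((M, \bullet)\), the preceding proposition produces a partial representation \(\pi_{\bullet} \colon H \to \End_{\Bbbk}(M)\); by \autoref{universaldeHpar} this factors uniquely through an algebra morphism \(\hat\pi_{\bullet} \colon \Hp \to \End_{\Bbbk}(M)\), and such a morphism is precisely a left \(\Hp\)-module structure \(x \cdot m = \hat\pi_{\bullet}(x)(m)\) on \(M\). On morphisms, I claim that a linear map \(f \colon M \to N\) respecting the partial actions is automatically \(\Hp\)-linear: since \(\Hp\) is generated as an algebra by the classes \([h]\), \(\Hp\)-linearity need only be tested on these generators, and there \([h] \cdot m = \hat\pi_{\bullet}([h])(m) = \pi_{\bullet}(h)(m) = h \bullet m\), so the condition \(f([h]\cdot m) = [h]\cdot f(m)\) is exactly \(f(h \bullet m) = h \bullet f(m)\).

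Conversely, I would define \(G \colon \CatHparm \to \Catpm\). On objects, a left \(\Hp\)-module is an algebra morphism \(\phi \colon \Hp \to \End_{\Bbbk}(M)\); composing with the canonical partial representation \([~] \colon H \to \Hp\) yields \(\phi \circ [~]\), which is again a partial representation (a partial representation composed with an algebra morphism is one, as noted in the proof of \autoref{universaldeHpar}), and hence a partial action \(\bullet_{\phi \circ [~]}\) on \(M\) by the other proposition above. On morphisms, every \(\Hp\)-linear map respects these partial actions, once more because \([h] \cdot m = h \bullet m\). Functoriality of \(F\) and \(G\) is immediate, since both act as the identity on the underlying linear maps.

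Finally I would check that \(F\) and \(G\) are mutually inverse. Starting from \((M, \bullet)\) and applying \(G \circ F\), the recovered partial action is \(h \bullet' m = (\hat\pi_{\bullet} \circ [~])(h)(m) = \pi_{\bullet}(h)(m) = h \bullet m\), so \(\bullet' = \bullet\); starting from a module \(\phi\) and applying \(F \circ G\), the functor \(F\) reforms the partial representation \(\pi_{\bullet_{\phi \circ [~]}} = \phi \circ [~]\), which factors through the unique morphism \(\widehat{\phi \circ [~]}\), and this must equal \(\phi\) by the uniqueness clause of \autoref{universaldeHpar}, recovering the original module. The only nontrivial point in the whole argument, and the one I would be most careful with, is the morphism-level compatibility; it hinges entirely on \(\Hp\) being generated as an algebra by \(\{[h] : h \in H\}\), which reduces \(\Hp\)-linearity to preservation of \(\bullet\). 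Everything else is handed to us by the universal property and the two preceding propositions.
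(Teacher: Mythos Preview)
Your proposal is correct and follows essentially the same approach as the paper: build \(F\) and \(G\) via the universal property of \(\Hp\) and the correspondence between \(\Hp\)-module structures and algebra maps \(\Hp \to \End_{\Bbbk}(M)\), act as the identity on morphisms, and check the two composites are the identity. If anything, you are more careful than the paper, which simply sets \(F(\varphi)=\varphi\) and \(G(\varphi)=\varphi\) without spelling out the generator argument for why partial-action-preserving maps coincide with \(\Hp\)-linear maps, and which dispatches the mutual-inverse check in one line.
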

\begin{proof}
    Given \(M \in \Catpm\) with partial action \(\bullet\), we induce a partial representation \(\pi_{\bullet} \colon H \longto \End_{\Bbbk}(M)\) such that \(\pi_{\bullet}(h)(m) = h \bullet m\).
    
    By the universal property of \(\Hp\), there exists an unique algebra morphism
    \(\hat {\pi}_{\bullet } \colon \Hp\) \(\longto \End_{\Bbbk}(M) \) such that \(\hat {\pi}_{\bullet } ([h]) = \pi_{\bullet }(h)\). So we have that \(M \in \CatHparm\) with action \(\triangleright_{\hat {\pi}_{\bullet}} \colon \Hp \otimes M \longto M\) given by
    \[
        [h] \triangleright_{\hat {\pi}_{\bullet }} m = \hat {\pi}_{\bullet} ([h])(m) = \pi_{\bullet } (h)(m) = h \bullet m.
    \]
    Therefore, we have a functor
    \[
        F \colon \Catpm \Longrightarrow \CatHparm
    \]
    defined by \(F(M, \bullet) = (M, \triangleright_{\hat {\pi}_{\bullet }})\) on objects and by \(F(\varphi) = \varphi\) on morphisms.
    

    On the other hand, given \(M \in \CatHparm\) with action \(\triangleright\), we induce an algebra morphism \(\pi_{\triangleright} \colon \Hp \longto \End_{\Bbbk}(M)\) such that \(\pi_{\triangleright}(h)(m) = h \triangleright m\).
        
    Precomposing with the canonical representation, we get a partial representation \( \tilde {\pi}_{\triangleright } \colon H \longto \End_{\Bbbk}(M) \). So we have that \(M \in \Catpm\) with partial action \(\bullet_{\tilde {\pi}_{\triangleright }} \colon H \otimes M \longto M\) given by
    \[
        h \bullet _{\tilde {\pi}_{\triangleright }} m = \tilde {\pi}_{\triangleright  } (h)(m) = \pi_{\triangleright } ([h])(m) = [h] \triangleright m.
    \]
    Therefore, we have a functor
    \[
        G \colon \CatHparm \Longrightarrow \Catpm
    \]
    defined by \(G(M, \triangleright ) = (M, \bullet _{\tilde {\pi}_{\triangleright }})\) on objects and by \(G(\varphi) = \varphi\) on morphisms.

    Its clear that \(F\) and \(G\) are mutually inverse to each other and, therefore, the categories \( \CatHparm \) and \( \Catpm \) are isomorphic.
\end{proof}

Finally, we will see that the symmetric partial module algebras are algebra objects in $\CatHparm$ and \emph{vice versa}.

\begin{teo}
    \label{spwma=alg-obj-in-HparMod-ida}
    \label{spwma=alg-obj-in-HparMod-volta}
    Let \(H\) be a weak Hopf algebra with bijective antipode. Then there exists an isomorphism between the categories of symmetric partial \(H\)-module algebras and of algebra objects in the category \(\Hp\)-modules, i.e.,
    \[
        _H\Cat{sym\text{-}parMod} \simeq \Cat{Alg} \left( \CatHparm \right). \qedhere
    \]
\end{teo}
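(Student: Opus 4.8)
The plan is to produce two functors that are mutually inverse and act as the identity on underlying vector spaces and on morphisms, so that the stated equivalence is in fact an isomorphism of categories. Everything hinges on unwinding what an algebra object in \(\CatHparm\) is. By \autoref{Hpar-monoidal}, \(\CatHparm\) is monoidal with tensor \(\otimes_{\A}\) and unit object \(\A\) (and \(1_{\A}=\E_{1_H}=1_{\Hp}\)). Hence an algebra object is a triple \((M,\mu,\eta)\) where \(M\) is an \(\Hp\)-module and \(\mu\colon M\otimes_{\A}M\longto M\), \(\eta\colon \A\longto M\) are \(\Hp\)-linear maps satisfying the usual associativity and unit axioms. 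Using \autoref{cats-pm-Hparm} I identify \(\Hp\)-modules with partial \(H\)-modules and write \(h\bullet m\) for the action. The structural facts I will use are that \(\Hp\) acts on the base by \([h]\bullet\E_k=h\Ape\E_k\), that \(h\bullet 1_{\A}=\E_{h_1}\E_{h_2}=\E_h\) by \autoref{propee'}\ref{propee'-3} (so \(\A\) is the cyclic \(\Hp\)-module generated by \(1_{\A}\)), and that on any \(M\) the \(\A\)-bimodule structure coming from \(s,t\) is
\[
\E_g\triangleright m=s(\E_g)\bullet m=g_1\bullet(S(g_2)\bullet m),\qquad m\triangleleft\E_g=t(\E_g)\bullet m=g_2\bullet(\Si(g_1)\bullet m),
\]
the right leg being where bijectivity of \(S\) enters, since \(t=\Sa'\) involves \(\Si\).

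For the forward functor, take a symmetric partial \(H\)-module algebra \(A\). The map \(\pi(h)(a)=h\Ape a\) is a partial representation on \(\End(A)\) (shown earlier), so \(A\) is a partial \(H\)-module, hence an \(\Hp\)-module; set \(\mu(a\otimes_{\A}b)=ab\) and let \(\eta\colon\A\longto A\) be the \(\Hp\)-linear extension of \(1_{\A}\mapsto 1_A\), so \(\eta(\E_h)=h\Ape 1_A\). The \(\Hp\)-linearity of \(\mu\) is exactly axiom (PA1), because the action on \(M\otimes_{\A}M\) is the diagonal one induced by \(\underline{\Delta}_l([h])=[h_1]\otimes_{\A}[h_2]\). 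That \(\mu\) descends to \(\otimes_{\A}\) amounts to the balancing identity \((a\triangleleft\E_g)b=a(\E_g\triangleright b)\), and that \(\eta\) is well defined (respects what annihilates \(1_{\A}\)) are the two places where I invoke the symmetry axiom \eqref{pma-sym} together with \autoref{lemaadicional} and \autoref{noHRcola}; concretely \(\E_g\bullet b=(g\Ape 1_A)b\) follows from (PA3), \eqref{p4-1} and \autoref{lemaadicional}, and the companion \(a\triangleleft\E_g=a(g\Ape 1_A)\) follows from \eqref{pma-sym}. Associativity and unitality of \((\mu,\eta)\) are inherited from \(A\).

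For the backward functor, take an algebra object \((M,\mu,\eta)\); endow \(M\) with product \(ab:=\mu(a\otimes_{\A}b)\) and unit \(1_M:=\eta(1_{\A})\), and let the partial action be the module action \(h\bullet a\). Then (PA1) is the \(\Hp\)-linearity of \(\mu\), (PA2) is the partial-module axiom \(1_H\bullet a=a\), and associativity/unitality of \(M\) as an algebra come from the monoidal axioms. The crux is recovering (PA3) and \eqref{pma-sym}: since \(\eta\) is \(\Hp\)-linear one gets \(\eta(\E_h)=h\bullet 1_M\), and the two unit axioms \(\mu\circ(\eta\otimes\mathrm{id})=\lambda\), \(\mu\circ(\mathrm{id}\otimes\eta)=\rho\) translate, via the bimodule formulas above, into
\[
(h\bullet 1_M)\,a=h_1\bullet(S(h_2)\bullet a),\qquad a\,(h\bullet 1_M)=h_2\bullet(\Si(h_1)\bullet a).
\]
Combining these with the \(\Hp\)-linearity and monoid-morphism property of \(\eta\) (which yields the \(a=1_M\) instances \(h\bullet(k\bullet 1_M)=(h_1\bullet 1_M)((h_2k)\bullet 1_M)\) and its mirror), axiom (PA1), the partial-representation relations \eqref{defprep-4}--\eqref{defprep-5}, and the weak-Hopf identities of \autoref{p4} and \autoref{p3}, I recover \(h\bullet(k\bullet a)=(h_1\bullet 1_M)((h_2k)\bullet a)\) and its symmetric companion, i.e. (PA3) and \eqref{pma-sym}.

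Finally I check the two constructions are mutually inverse: both fix the underlying space and the action \(h\bullet a\), both give the same multiplication (\(\mu\) is determined by the product and conversely), and the unit \(1_A=\eta(1_{\A})\) is preserved; on morphisms both functors are the identity, a linear map being \(\Hp\)-linear and compatible with \(\mu,\eta\) on one side exactly when it is \(H\)-equivariant and an algebra map on the other. The main obstacle is precisely the equivalence between the balancing-plus-unit data of an algebra object and the symmetric partial-action axioms (PA3)--\eqref{pma-sym}: translating between the two-sided \(\A\)-structure — whose left leg uses \(S\) and whose right leg uses \(\Si\) — and the single symmetric partial action forces the use of the bijective antipode and a careful Sweedler computation, and is where the symmetry hypothesis is indispensable.
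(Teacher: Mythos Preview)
Your proposal is correct and follows essentially the same route as the paper: build mutually inverse functors that are the identity on underlying spaces and morphisms, using \autoref{cats-pm-Hparm} to pass between partial \(H\)-modules and \(\Hp\)-modules, and use the \(\A\)-bimodule structure via \(s\) and \(t=\mathcal{S}'\) (whence the bijective antipode) to mediate between the unit/balancing data of an algebra object and the axioms (PA3)--\eqref{pma-sym}. The only cosmetic difference is emphasis: you isolate the unit axioms first to obtain \((h\bullet 1_M)a=h_1\bullet(S(h_2)\bullet a)\) and its \(\Si\)-companion and then feed these into the partial-representation relations, whereas the paper computes directly in \(\Hp\) (e.g.\ \(h\bullet(k\bullet b)=E_{h_1}[h_2k]\triangleright b=([h_1]\triangleright 1_B)([h_2k]\triangleright b)\)) and invokes the bimodule formula at the last step---the same identity, read in the opposite direction.
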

\begin{proof}
    Let \((B, \triangleright \colon \Hp \otimes B \longto B) \) be an algebra object in \CatHparm. Hence, there exist \(\mu_B \colon B \otimes_{\A} B \longto B\) and \(\eta_B \colon \A \longto B\) morphisms in \CatHparm\ satisfying the appropriate axioms (associativity and unitality).

    From \autoref{cats-pm-Hparm}, we know that \((B, \bullet ) \) is an object in \Catpm. Since \(\A\) is an algebra over \(\Bbbk\), one can define the linear maps:
    \begin{align*}
        m_B & \colon B \otimes B \longto B \, \text{ by } \,  m_B(b \otimes b') = \mu_B (b \otimes b')\\
        \intertext{and}
        u_B & \colon \Bbbk \longto B \,  \text{ by } \,  u_B(\lambda )  = \eta_B ( \lambda 1_A ) = \lambda 1_B.
    \end{align*}

    The diagrams of associativity and unitality for \( m_B \) and \( u_B \) are commutative, since the diagrams of \( \mu_B \) and \( \eta_B \) are commutative.

    In order to show that \(B\) is a partial \(H\)-module algebra, let us recall that \(B\) is a \(\A\)-bimodule via
    \begin{align}\label{B-eh-Apar-bimod}
        a b &= s(a) \triangleright b = a \triangleright b
        \, \text{ and } \,
        b a = t(a) \triangleright b = \mathcal{S}'(a) \triangleright b,
    \end{align}
    for any \(a \in \A\) and \(b \in B\).
    
    Let \(h, k \in H\) and \(b, b' \in B\). Then:
    \begin{enumerate}
    \item
        Since \(\Hp\) is a Hopf coalgebroid (cf. \autoref{Hpar-monoidal}) and \(\eta_B\) is a morphism in \CatHparm, we have that
        \begin{equation} \label{[h] |> 1_B}
            [h] \triangleright 1_B = [h] \triangleright \eta_B (1_{\A }) = \eta_B (h \cdot 1_{\A }) = \eta_B (E_h) = E_h \eta_B (1_A) = E_h 1_B.
        \end{equation}
    \item\begin{align*}
        h \bullet (b b')
            &= [h] \triangleright (b b')\\
            &= ([h_1] \triangleright b) ([h_2] \triangleright b')\\
            &= (h_1 \bullet b) (h_2 \bullet b')\\
        \end{align*}
    \item \begin{align*}
        h \bullet (k \bullet b)
            &= [h] \triangleright ([k] \triangleright b)\\
            &= [h] [k] \triangleright b\\
            \overset{\eqref{defprep-6}}&= [h_1][S(h_2)][h_3] [k] \triangleright b\\
            \overset{\eqref{defprep-4}}&= [h_1] [S(h_2)] [h_3 k] \triangleright b\\
            &= E_{h_1} [h_2 k] \triangleright b\\
            \overset{\eqref{[h] |> 1_B}}&= ([h_1] \triangleright 1_B) ( [h_2 k] \triangleright b)\\
            &= (h_1 \bullet 1_B) ( h_2 k \bullet b )
        \end{align*}
    \item \begin{align*}
        (h_1k \bullet b)(h_2 \bullet 1_B)
            &= ([h_1 k] \triangleright b)([h_2] \triangleright 1_B)\\
            \overset{\eqref{[h] |> 1_B}}&= ([h_1] \triangleright ( [k] \triangleright b))E_{h_2}\\
           \overset{\eqref{B-eh-Apar-bimod}}&= \mathcal{S}'(E_{h_2}) \triangleright ([h_1] \triangleright ( [k] \triangleright b))\\
           &= [h_3] [S^{-1}(h_2)] \triangleright ([h_1] \triangleright ( [k] \triangleright b))\\
           &= [h_3] [S^{-1}(h_2)] [h_1] \triangleright ( [k] \triangleright b)\\
           &= [S(S^{-1}(h_3))] [S^{-1}(h_2)] [S(S^{-1}(h_1))] \triangleright ( [k] \triangleright b)\\
           &= [S(S^{-1}(h)_1)] [S^{-1}(h)_2] [S(S^{-1}(h)_3)] \triangleright ( [k] \triangleright b)\\
           \overset{\ref{6-equiv}\eqref{6-equiv-4}}&= [S(S^{-1}(h))] \triangleright ( [k] \triangleright b)\\
           &= [h] \triangleright ( [k] \triangleright b)\\
           &= h \bullet ( k \bullet b)
        \end{align*}
    \end{enumerate}

    Therefore, \(B\) is a symmetrical partial \(H\)-module algebra.

    Then, we have the functor
    \[
        F \colon \Cat{Alg} \left( \CatHparm \right) \to {_H}\Cat{sym\text{-}parMod}  
    \]
    given by \(F((B, \triangleright, \mu_B, \eta_B) = (B, \bullet, m_B, u_B)\) on objects and \(F(\varphi) = \varphi\) on morphisms.

    Reciprocally, let \(B\) be a symmetrical partial \(H\)-module algebra via \(\bullet \colon H \otimes B \longto B \). Since \(B\) is an object in \Catpm, it follows that \(B\) is an object in \CatHparm\ with action \(\triangleright \colon \Hp \otimes B \longto B\) given by \([h] \triangleright b \coloneqq h \bullet b\).

    Firstly, we need to endow \(B\) with an structure of \(\A\)-bimodule, as follows:
    \begin{align*}
    E_h b : \mskip-6mu
        &= h_1 \bullet ( S(h_2) \bullet b )\\
        &= (h_1 \bullet 1_B) ( h_2 S(h_3) \bullet b )\\
        \overset{\ref{p4}\eqref{p4-1}}&= (1_1 h \bullet 1_B) ( 1_2 \bullet b )\\
        &= (1_1 h \bullet 1_B) ( 1_2 \bullet 1_B ) ( 1_3 \bullet b )\\
        &= ( 1_1  \bullet ((h \bullet 1_B ) 1_B) ) ( 1_2 \bullet b )\\
        &= 1  \bullet ((h \bullet 1_B ) 1_B b)\\
        &= (h \bullet 1_B ) b\\
    \intertext{and}
    b E_h : \mskip-6mu
        &= h_2 \bullet ( S^{-1}(h_1) \bullet b ) \\
        &= (h_2 S^{-1}(h_1) \bullet b ) (h_3 \bullet 1_B) \\
        &= (S^{-1}(h_1 S(h_2)) \bullet b ) (h_3 \bullet 1_B) \\
        \overset{\ref{p12}\eqref{p12-2}}&= (S^{-1}(S(1_1)) \bullet b ) (1_2 h \bullet 1_B) \\
        &= (1_1 \bullet b ) (1_2 h \bullet 1_B) \\
        &= 1 \bullet (b (h \bullet 1_B)) \\
        &= b (h \bullet 1_B)
    \end{align*}

    Given \(h \in H\) and \(b, b' \in B\), we have:
    \begin{gather*}
        E_h (b b') = (h \bullet 1_B)(b b') = ((h \bullet 1_B)b) b' = (E_h b) b',\\
        (b b') E_h = (b b')(h \bullet 1_B) = b((h \bullet 1_B)b') = b (E_h b'),\\
        b (E_h b') = b((h \bullet 1_B) b') = (b (h \bullet 1_B)) b' = (b E_h) b'\\
    \shortintertext{and}
        [h] \triangleright (b b') = h \bullet (b b') = (h_1 \bullet b ) (h_2 \bullet b') = ([h_1] \triangleright b ) ([h_2] \triangleright b'),
    \end{gather*}
    so the multiplication \(m_B\) of \(B\) as object in \Catpm\ induces a morphism \(\mu_B \colon B \otimes_{\A} B \longto B\) in \CatHparm.

    As \(E_h 1_B = 1_B E_h\), for any \(h \in H\), it follows that \(\eta_B \colon \A \longto B\) given by \(\eta_B(a) \coloneqq a 1_B \) is a morphism in \(_{\A}\Cat{Mod}_{\A}\). Moreover, given \(h \in H\),
    \[
        [h] \triangleright \eta_B(1_A) = h \bullet \eta_B(1_A) = \eta_B( h \Ape 1_A),
    \]
    therefore, \(\eta_B\) is a morphism in \CatHparm.

    Since \((B, m_B, u_B)\) is an algebra, it follows directly that \((B, \mu_B, \eta_B)\) is an algebra object in \CatHparm.

    Then, we have the functor
    \[
        G \colon _H\Cat{sym\text{-}parMod} \to \Cat{Alg} \left( \CatHparm \right)
    \]
    given by \(G (B, \triangleright, m_B, u_B) = (B, \bullet, \mu_B, \eta_B)\) on objects and \(F (\varphi) = \varphi\) on morphisms.

    Its clear that \(F\) and \(G\) are mutually inverse to each other and, therefore, the categories \(_H\Cat{sym\text{-}parMod}\) and \( \Cat{Alg} \left( \CatHparm \right) \) are isomorphic.
\end{proof}

\section{Quantum inverse semigroups}

In \cite{qua}, a new Hopf structure was introduced, called \emph{quantum inverse semigroup}. The primary goal was to extend the concept of inverse semigroups into the Hopf context, analogous to how Hopf algebras generalize groups. In that work, the authors proved that when $H$ is a cocommutative Hopf algebra, the partial Hopf algebra $H_{par}$ has a quantum inverse semigroup structure. When we are dealing with weak Hopf algebras, it also happens.  So, in this section, we shall prove that $\Hp$ is a quantum inverse semigroup. From now on, assume that $\Bbbk$ is a field of characteristic 0 (to maintain consistency with the definition of QISG given in \cite{qua}).

\begin{dfn}\cite[Definition 3.1]{qua}
     A quantum inverse semigroup (QISG) is a triple $(H, \Delta, \mathcal{S})$ in which: \begin{itemize}
    \item[(QISG1)] $H$ is a (not necessarily unital) $\Bbbk$-ring.
    \item[(QISG2)] $\Delta: H \longto H \otimes H$ is multiplicative and coassociative.
    \item[(QISG3)] $\mathcal{S}: H \longto H$ is a $\Bbbk$-linear map, called \emph{pseudo-antipode}, satisfying
    \subitem(i) $\mathcal{S}(h k)=\mathcal{S}(k) \mathcal{S}(h)$, for all $h, k \in H$.
    \subitem(ii) $I * \mathcal{S} * I=I$ and $\mathcal{S} * I * \mathcal{S}=\mathcal{S}$ in the convolution algebra $\operatorname{End}_{\Bbbk}(H)$.
    \item[(QISG4)] The sub $\Bbbk$-rings generated by the images of $I * \mathcal{S}$ and $\mathcal{S} * I$ mutually commute, that is, for every $h, k \in H$,
    \[
    h_{1} \mathcal{S}\left(h_{2}\right) \mathcal{S}\left(k_{1}\right) k_{2}=\mathcal{S}\left(k_{1}\right) k_{2} h_{1} \mathcal{S}\left(h_{2}\right)
    \]
    \end{itemize}
    
    A QISG is unital if $H$ is a unital $\Bbbk$-algebra and $\mathcal{S}\left(1_{H}\right)=1_{H}$. A QISG is co-unital if $H$ is a $\Bbbk$-coalgebra and $\varepsilon_{H} \circ \mathcal{S}=\varepsilon_{H}$.
\end{dfn}

For readers interested in learning more about this structure, we refer them to \cite{qua}, where properties and several examples are presented.

\begin{teo}\label{QISG}
    Let $H$ be a cocommutative weak Hopf algebra over $\Bbbk$. Then, the Hopf algebroid $\Hp$ has the structure of a unital QISG.
\end{teo}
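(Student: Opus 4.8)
The plan is to exhibit explicit data $(\Hp,\Delta,\Sa)$ and verify the four axioms (QISG1)--(QISG4) together with the unital requirement. The pseudo-antipode is the anti-algebra map $\Sa$ already constructed for \autoref{hopfalg}, namely $\Sa([h^1]\cdots[h^n])=[S(h^n)]\cdots[S(h^1)]$, so (QISG3)(i) holds by definition; since $\Hp$ is a unital $\Bbbk$-algebra, (QISG1) is immediate, and $\Sa(1_{\Hp})=\Sa([1_H])=[S(1_H)]=[1_H]=1_{\Hp}$ by \eqref{p11-4}, giving unitality. For the comultiplication I would take the $\Bbbk$-linear map determined by $\Delta([h])=[h_1]\otimes[h_2]$ extended multiplicatively, i.e. $\Delta([h^1]\cdots[h^n])=[h^1_1]\cdots[h^n_1]\otimes[h^1_2]\cdots[h^n_2]$; this is the same element-level formula as the bialgebroid coproduct $\underline{\Delta}_l$ of \autoref{hopfalg}, now read in $\Hp\otimes_{\Bbbk}\Hp$. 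Coassociativity then follows from coassociativity of $\Delta_H$ and multiplicativity is built into the definition, so (QISG2) reduces to one genuinely delicate point.

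That point --- and the place where cocommutativity of $H$ is indispensable --- is well-definedness of $\Delta$ as a map out of the quotient $\Hp=T(H)/I$. Unlike the classical case, $\Delta$ is not unital here ($[1_1]\otimes[1_2]\neq 1_{\Hp}\otimes 1_{\Hp}$ in general), so one cannot invoke \autoref{universaldeHpar} directly; instead I would check that the formula respects each generator \eqref{defHpar-1}--\eqref{defHpar-6} of $I$. Applying $\Delta$ to a relation such as \eqref{defHpar-2} or \eqref{defHpar-6} produces, in each of the two tensor legs, a word whose Sweedler indices come from a single iterated coproduct of $H$ but are interleaved between the legs (for instance \eqref{defHpar-6} yields $[h_{(1)}][S(h_{(4)})][h_{(5)}]\otimes[h_{(2)}][S(h_{(3)})][h_{(6)}]$). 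Cocommutativity lets one relabel these iterated coproduct factors freely, after which the defining partial-representation relations for $[~]$ apply leg by leg and collapse both sides to $[h_1]\otimes[h_2]$. The cleanest packaging is to show that $\mathbf{1}:=[1_1]\otimes[1_2]$ is idempotent, that the corner $\mathbf{1}(\Hp\otimes\Hp)\mathbf{1}$ is a unital algebra, and that $h\mapsto[h_1]\otimes[h_2]$ is a genuine (unital) partial representation into this corner, whence \autoref{universaldeHpar} produces $\Delta$; all of these verifications use cocommutativity in exactly this manner.

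For (QISG3)(ii) I would not argue on generators (the convolution $I*\Sa*I$ is not multiplicative), but read the identity off the Hopf-algebroid structure. Writing $\Delta^2(x)=x_1\otimes x_2\otimes x_3$ and grouping, the antipode axiom (iv) of \autoref{hopfalg}, $\mu\circ(I\otimes\Sa)\circ\underline{\Delta}_r=s\circ\underline{\epsilon}_l$, gives $x_1\Sa(x_2)=s(\underline{\epsilon}_l(x_{(1)}))$ at the level of elements, so that $(I*\Sa*I)(x)=s(\underline{\epsilon}_l(x_{(1)}))\,x_{(2)}=\underline{\epsilon}_l(x_{(1)})\triangleright x_{(2)}=x$ by the left counit identity verified in part (3) of that theorem; on generators this is precisely \eqref{defprep-6}. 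The symmetric identity $\Sa*I*\Sa=\Sa$ follows the same way from the companion axiom $\mu\circ(\Sa\otimes I)\circ\underline{\Delta}_l=\tilde{s}\circ\underline{\epsilon}_r$ together with the right counit.

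Finally, (QISG4) is essentially free once (QISG3) is in place: the same antipode axioms show that the image of $I*\Sa$ lies in $\A$ (indeed $x_1\Sa(x_2)=\underline{\epsilon}_l(x)$, equal to $\E_h$ on generators) and that the image of $\Sa*I$ lies in $\At$ ($\Sa(x_1)x_2=\underline{\epsilon}_r(x)$), and these two subalgebras commute elementwise by \autoref{propee'}\ref{propee'-11}. I expect the main obstacle to be none of the formal verifications above but precisely the well-definedness of $\Delta$ over $\Bbbk$: it is the single step that genuinely fails without cocommutativity, and it must be carried out by a careful reindexing of iterated coproducts, leg by leg.
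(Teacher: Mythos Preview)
Your proposal is correct, and your construction of $\Delta$ and $\Sa$ matches the paper's almost exactly: the paper also factors through the universal property of $\Hp$ by exhibiting $h\mapsto[h_1]\otimes[h_2]$ as a partial representation into a unital subalgebra of $\Hp\otimes\Hp$ (the paper writes the target as $\delta(H)$ with unit $[1_1]\otimes[1_2]$, which is morally your corner $\mathbf{1}(\Hp\otimes\Hp)\mathbf{1}$), and it constructs $\Sa$ via a partial representation $h\mapsto[S(h)]$ into $(\Hp)^{\mathrm{op}}$ just as you reuse the map from \autoref{hopfalg}.

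Where you genuinely diverge is in the verification of (QISG3)(ii) and (QISG4). The paper proves $I*\Sa*I=I$ by an explicit induction on the word length $n$, repeatedly using \autoref{propee'}\ref{propee'-2} to push the innermost $\E$ to the front, and then obtains $\Sa*I*\Sa=\Sa$ by a cocommutative reindexing that recasts it as $I*\Sa*I$ applied to $[S(h^n)]\cdots[S(h^1)]$. For (QISG4) the paper expands \emph{both} $I*\Sa$ and $\Sa*I$ as products of $\E$'s and then invokes the cocommutative identity \autoref{propee'}\ref{propee'-14}. Your route instead reads all of this off the Hopf-algebroid axioms of \autoref{hopfalg}: the identity $\mu\circ(I\otimes\Sa)\circ\underline{\Delta}_r=s\circ\underline{\epsilon}_l$ together with the left counit gives $I*\Sa*I=I$; the companion axiom plus the \emph{other} counit identity $x_{(1)}\triangleleft\underline{\epsilon}_l(x_{(2)})=x$ (and $\Sa\circ t=\mathrm{id}_{\A}$) gives $\Sa*I*\Sa=\Sa$; and $I*\Sa$, $\Sa*I$ land in $\A$, $\At$ respectively, which commute by \autoref{propee'}\ref{propee'-11}. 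Your argument is more conceptual and isolates cocommutativity precisely at the well-definedness of $\Delta$ over $\Bbbk$, whereas the paper's hands-on induction is self-contained and does not rely on the bookkeeping needed to transport identities between $\otimes_\Bbbk$, $\otimes_{\A}$, and $\otimes_{\At}$.
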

\begin{proof}
    The proof follows similar steps as in \cite[Theorem 3.12]{qua} with the appropriate adaptations to the weak case. For the reader's convenience, at each step of the proof, we will highlight where the properties of the weak case appear. For this, consider the following linear map:
    \[
    \delta \colon H \longto \delta(H) \subseteq \Hp \otimes \Hp,
    \quad
    h \longmapsto\left[h_{1}\right] \otimes\left[h_{2}\right].
    \]
    
    We shall demonstrate that the map $\delta$ is a partial representation of $H$ in $\delta(H)$. Note that using \autoref{HRHLprep}, $[1_1]\otimes [1_2]$ is the unit in $\delta(H)$, hence it is clear that \eqref{defprep-1} holds. We will verify axioms \eqref{defprep-2} and \eqref{defprep-6}. The remaining axioms follow from similar arguments. 

    Through this proof, we use (*) to denote when the cocommutative property is used.
    
    For $h, k \in H$, we have that:
    \begin{align*}
    \delta(h) \delta\left(k_{1}\right) \delta\left(S\left(k_{2}\right)\right)
    &=
    \left[h_{1}\right]\left[k_{1}\right]\left[S\left(k_{4}\right)\right] \otimes\left[h_{2}\right]\left[k_{2}\right]\left[S\left(k_{3}\right)\right] \\
    \overset{(*)}&{=}
     \left[h_{1}\right]\left[k_{1}\right]\left[S\left(k_{2}\right)\right] \otimes\left[h_{2}\right]\left[k_{3}\right]\left[S\left(k_{4}\right)\right]\\
    \overset{\eqref{defHpar-2}}&{=}
    \left[h_{1} k_{1}\right]\left[S\left(k_{2}\right)\right] \otimes\left[h_{2} k_{3}\right]\left[S\left(k_{4}\right)\right] \\
    \overset{(*)}&{=}
    \left[h_{1} k_{1}\right]\left[S\left(k_{4}\right)\right] \otimes\left[h_{2} k_{2}\right]\left[S\left(k_{3}\right)\right] \\
    &=
    \delta\left(h k_{1}\right) \delta\left(S\left(k_{2}\right)\right),
    \end{align*}
    which proves \eqref{defprep-2}. Also,
    \begin{align*}
    \delta(h_1) \delta\left(S(h_{2})\right) \delta\left(h_{3}\right)
    &= 
    \left[h_{1}\right]\left[S(h_{4})\right]\left[h_{5}\right] \otimes\left[h_{2}\right]\left[S(h_{3})\right]\left[h_{6}\right] \\
    \overset{(*)}&{=}
    \left[h_{1}\right]\left[S(h_{2})\right]\left[h_{3}\right] \otimes\left[h_{4}\right]\left[S(h_{5})\right]\left[h_{6}\right] \\
    \overset{\eqref{defHpar-6}}&{=}
    \left[h_1\right] \otimes\left[h_{2}\right] \\
    &=
    \delta\left(h\right),
    \end{align*}
    
    Therefore, by \autoref{universaldeHpar}, there exists a unique algebra morphism $\Delta: \Hp \longto \delta(H) \subseteq \Hp \otimes \Hp$ given by
    \[
       \Delta\left(\left[h\right]\right)=\left[h_{1}\right] \otimes\left[h_{2}\right].
    \]
    In particular, $\Delta$ is also an algebra morphism from $\Hp$ to $\Hp \otimes \Hp$. Observe that $\Delta$ is induced by the comultiplication of $H$, so it is coassociative. Moreover, $\Delta$ is not necessarily unit-preserving in $\Hp\otimes\Hp$.
    
    In order to define the pseudo-antipode, consider the linear map
    \[
    \widetilde{S}: H \longto (\Hp)^{\mathrm{op}}, \quad h \longmapsto [S(h)]
    \]
    
    We shall prove that the map $\widetilde{S}$ is a partial representation. We will verify \eqref{defprep-1}, \eqref{defprep-2} and \eqref{defprep-6}. The axiom \eqref{defprep-1} is easy to check, as $\tilde{S}(1_H) = [S(1_H)] \stackrel{\autoref{p11}\eqref{p11-4}}{=} [1_H]$. Next, for all $h, k \in H$, 
    \[
    \begin{aligned}
    \tilde{S}(h) \cdot_{\text {op }} \tilde{S}\left(k_{1}\right) \cdot_{\text {op }} \tilde{S}\left(S\left(k_{2}\right)\right)
    &=
    [S(h)] \cdot_{\text {op }}\left[S\left(k_{1}\right)\right] \cdot_{\text {op }}\left[S\left(S\left(k_{2}\right)\right)\right] \\
    &=
    \left[S\left(S(k)_{1}\right)\right]\left[S(k)_{2}\right][S(h)] \\
    \overset{\eqref{defHpar-5}}&=
    \left[S\left(S(k)_{1}\right)\right]\left[S(k)_{2} S(h)\right] \\
    & =\left[S\left(S\left(k_{2}\right)\right)\right]\left[S\left(h k_{1}\right)\right] \\
    & =\left[S\left(h k_{1}\right)\right] \cdot_{\text {op }}\left[S\left(S\left(k_{2}\right)\right)\right] \\
    & =\widetilde{S}\left(h k_{1}\right) \cdot_{\text {op }} \tilde{S}\left(S\left(k_{2}\right)\right)
    \end{aligned}
    \]
    and so \eqref{defprep-2} is proved. Now we shall verify \eqref{defprep-6}. For every $h, k \in H$, 
    \[
    \begin{aligned}
    \tilde{S}(h_1) \cdot_{\text {op }} \tilde{S}\left(S(h_{2})\right) \cdot_{\text {op }} \tilde{S}\left(h_{3}\right)
    &=
    [S(h_1)] \cdot_{\text {op }}\left[S\left(S(h_{2})\right)\right] \cdot_{\text {op }}\left[S\left(h_{3}\right)\right] \\
    &=
    \left[S\left(h_{3}\right)\right]\left[S\left(S(h_{2})\right)\right][S(h_1)] \\
    &=
    \left[S\left(h\right)_1\right]\left[S\left(S(h)\right)_2\right][S(h)_3] \\
    \overset{\eqref{defHpar-6}}&{=}
    [S(h)] = \tilde{S}(h),
    \end{aligned}
    \]
    which shows \eqref{defprep-6}. Therefore, $\widetilde{S}$ is a partial representation of $H$ in $(\Hp)^{\mathrm{op}}$, inducing, by \autoref{universaldeHpar}, a morphism of algebras $S : \Hp \longto (\Hp)^{\mathrm{op}}$, or equivalently, an anti-morphism of algebras $S: \Hp \longto \Hp$ given by
    \[
    S\left(\left[h^{1}\right] \ldots\left[h^{n}\right]\right)=\left[S\left(h^{n}\right)\right] \ldots\left[S\left(h^{1}\right)\right]
    \]
    
    Next we shall demonstrate $I * S * I=I$ and $S * I * S=S$.

    Let us prove the identity $I * S * I\left(\left[h^{1}\right] \ldots\left[h^{n}\right]\right)=\left[h^{1}\right] \ldots\left[h^{n}\right]$ by induction on $n \geq 1$. For $n=1$, we have
    
    \[
    I * S * I([h])=\left[h_{1}\right]\left[S\left(h_{2}\right)\right]\left[h_{3}\right] \Ref{defHpar-6}[h]
    \]
    
    Assume the claim valid for $n$; then,
    
    \[
    \begin{aligned}
    &I * S * I\left(\left[h^{1}\right] \ldots\left[h^{n+1}\right]\right) = \\
    &=
    \left[h_{1}^{1}\right] \ldots\left[h_{1}^{n+1}\right]\left[S\left(h_{2}^{n+1}\right)\right] \ldots\left[S\left(h_{2}^{1}\right)\right]\left[h_{3}^{1}\right] \ldots\left[h_{3}^{n+1}\right] \\
    &=
    \left[h_{1}^{1}\right] \ldots\left[h_{1}^{n}\right] E_{h_{1}^{n+1}}\left[S\left(h_{2}^{n}\right)\right] \ldots\left[S\left(h_{2}^{1}\right)\right]\left[h_{3}^{1}\right] \ldots\left[h_{3}^{n}\right]\left[h_{3}^{n+1}\right] \\
    \overset{\ref{propee'}\ref{propee'-2}}&{=} E_{h_{1}^{1} \ldots h_{1}^{n} h_{1}^{n+1}}\left[h_{2}^{1}\right] \ldots\left[h_{2}^{n}\right]\left[S\left(h_{3}^{n}\right)\right] \ldots\left[S\left(h_{3}^{1}\right)\right]\left[h_{4}^{1}\right] \ldots\left[h_{4}^{n}\right]\left[h_{3}^{n+1}\right] \\
    \overset{I.H.}&{=}
    E_{h_{1}^{1} \ldots h_{1}^{n} h_{1}^{n+1}}\left[h_{2}^{1}\right] \ldots\left[h_{2}^{n}\right]\left[h_{2}^{n+1}\right] \\
    \overset{\ref{propee'}\ref{propee'-2}}&{=}
    \left[h^{1}\right] \ldots\left[h^{n}\right] E_{h_{1}^{n+1}}\left[h_{2}^{n+1}\right] \\
    &=
    \left[h^{1}\right] \ldots\left[h^{n}\right]\left[h_{1}^{n+1}\right]\left[S\left(h_{2}^{n+1}\right)\right]\left[h_{3}^{n+1}\right] \\
    \overset{\eqref{defHpar-6}}&=
    \left[h^{1}\right] \ldots\left[h^{n+1}\right] .
    \end{aligned}
    \]
    For the identity $S * I * S=S$, consider $\left[h^{1}\right] \ldots\left[h^{n}\right] \in \Hp$, and then
    \[
    \begin{aligned}
    &S * I * S\left(\left[h^{1}\right] \ldots\left[h^{n}\right]\right) \\
    & =\left[S\left(h_{1}^{n}\right)\right] \ldots\left[S\left(h_{1}^{1}\right)\right]\left[h_{2}^{1}\right] \ldots\left[h_{2}^{n}\right]\left[S\left(h_{3}^{n}\right)\right] \ldots\left[S\left(h_{3}^{1}\right)\right] \\
    & =\left[S\left(h_{3}^{n}\right)\right] \ldots\left[S\left(h_{3}^{1}\right)\right]\left[S\left(S\left(h_{2}^{1}\right)\right)\right] \ldots\left[S\left(S\left(h_{2}^{n}\right)\right)\right]\left[S\left(h_{1}^{n}\right)\right] \ldots\left[S\left(h_{1}^{1}\right)\right] \\
    & =\left[S\left(h^{n}\right)_{1}\right] \ldots\left[S\left(h^{1}\right)_{1}\right]\left[S\left(S\left(h^{1}\right)_{2}\right)\right] \ldots\left[S\left(S\left(h^{n}\right)_{2}\right)\right]\left[S\left(h^{n}\right)_{(3)}\right] \ldots\left[S\left(h^{1}\right)_{(3)}\right] \\
    & =\left[S\left(h^{n}\right)\right] \ldots\left[S\left(h^{1}\right)\right] \\
    & =S\left(\left[h^{1}\right] \ldots\left[h^{n}\right]\right) .
    \end{aligned}
    \]
    
    Finally, in order to verify axiom (QISG4), note that
    
    \[
    \begin{aligned}
    & I * S\left(\left[h^{1}\right] \ldots\left[h^{n}\right]\right)=\left[h_{1}^{1}\right] \ldots\left[h_{1}^{n}\right]\left[S\left(h_{2}^{n}\right)\right] \ldots\left[S\left(h_{2}^{1}\right)\right] = \\
    &=
    \left[h_{1}^{1}\right] \ldots\left[h_{1}^{n-1}\right] E_{h^{n}}\left[S\left(h_{2}^{n-1}\right)\right] \ldots\left[S\left(h_{2}^{1}\right)\right] \\
    &=
    E_{h_{1}^{1} \ldots h_{1}^{n-1} h^{n}}\left[h_{2}^{1}\right] \ldots\left[h_{2}^{n-1}\right]\left[S\left(h_{3}^{n-1}\right)\right] \ldots\left[S\left(h_{3}^{1}\right)\right] \\
    &=
    E_{h_{1}^{1} \ldots h_{1}^{n-1} h^{n}}\left[h_{2}^{1}\right] \ldots\left[h_{2}^{n-2}\right] E_{h_{2}^{n-1}}\left[S\left(h_{3}^{n-2}\right)\right] \ldots\left[S\left(h_{3}^{1}\right)\right] \\
    &=
    E_{h_{1}^{1} \ldots h_{1}^{n-1} h^{n}} E_{h_{2}^{1} \ldots h_{2}^{n-2} h_{2}^{n-1}}\left[h_{3}^{1}\right] \ldots\left[h_{3}^{n-2}\right]\left[S\left(h_{4}^{n-2}\right)\right] \ldots\left[S\left(h_{4}^{1}\right)\right]\\
    &= \ldots = E_{h_{1}^{1} \ldots h_{1}^{n-1} h^{n}} E_{h_{2}^{1} \ldots h_{2}^{n-2} h_{2}^{n-1}}\ldots E_{h_n^1}
    \end{aligned}
    \]
       
    while, on the other hand,
    
    \[
    \begin{aligned}
    S * I\left(\left[h^{1}\right] \ldots\left[h^{n}\right]\right) & =\left[S\left(h_{1}^{n}\right)\right] \ldots\left[S\left(h_{1}^{1}\right)\right]\left[h_{2}^{1}\right] \ldots\left[h_{2}^{n}\right] \\
    & =\left[S\left(h_{2}^{n}\right)\right] \ldots\left[S\left(h_{2}^{1}\right)\right]\left[S\left(S\left(h_{1}^{1}\right)\right)\right] \ldots\left[S\left(S\left(h_{1}^{n}\right)\right)\right] \\
    & =\left[S\left(h^{n}\right)_{1}\right] \ldots\left[S\left(h^{1}\right)_{1}\right]\left[S\left(S\left(h^{1}\right)_{2}\right)\right] \ldots\left[S\left(S\left(h^{n}\right)_{2}\right)\right]
    \end{aligned}
    \]
    
    \[
    \begin{aligned}
    & =E_{S\left(h^{n}\right)_{1} \ldots S\left(h^{2}\right)_{1}} S\left(h^{1}\right) E_{S\left(h^{n}\right)_{2}} \ldots S\left(h^{2}\right)_{2} \ldots E_{S\left(h^{n}\right)_{n}} \\
    & =E_{S\left(h_{n}^{n}\right) \ldots S\left(h_{n}^{2}\right) S\left(h^{1}\right)} E_{S\left(h_{n-1}^{n}\right) \ldots S\left(h_{n-1}^{2}\right)} \ldots E_{S\left(h_{1}^{n}\right)}
    \end{aligned}
    \]
    
    As both expressions can be written in terms of products of elements $E_{h}$, for $h \in H$, then they commute among themselves by \autoref{propee'} \ref{propee'-14}.  Therefore, for a cocommutative Hopf algebra $H$, the Hopf algebroid $\Hp$ is a QISG. Furthermore, it is clearly unital.
\end{proof}

\section*{Declarations}

\subsection*{Author's Contribution}
All authors wrote the main manuscript text and reviewed the manuscript.

\subsection*{Ethical Approval}
Not applicable.

\subsection*{Competing Interests}
 The authors have no competing interests as defined by Springer, or other interests that might be perceived to influence the results and/or discussion reported in this paper.

 \subsection*{Availability of Data and Materials}
 Data sharing not applicable to this article as no datasets were generated or analysed during the current study.

 \subsection*{Funding}
 There was no funding on the realization of this work.

\bibliographystyle{abbrv}

\end{document}